\newtheorem{thm}{Theorem}[section]
\newtheorem{lemma}[thm]{Lemma}
\newtheorem{prop}[thm]{Proposition}
\newtheorem{cor}[thm]{Corollary}
\newtheorem{defn}[thm]{Definition}
\newtheorem{ques}[thm]{Question}
\newtheorem{claim}[thm]{Claim}
\theoremstyle{remark}
\newtheorem{ex}[thm]{Example}
\newtheorem{rmk}[thm]{Remark}
\numberwithin{figure}{section}
\numberwithin{equation}{section}
\newcommand{\N}{\mathbb{N}}
\newcommand{\Z}{\mathbb{Z}}
\newcommand{\Q}{\mathbb{Q}}
\newcommand{\R}{\mathbb{R}}
\newcommand{\C}{\mathbb{C}}
\newcommand{\CP}{\mathbb{CP}}
\newcommand{\F}{\mathbb{F}}
\renewcommand{\O}{\mathcal{O}}
\newcommand{\de}{\partial}
\newcommand{\HFoo}{HF^\infty}
\newcommand{\CFKoo}{CFK^\infty}
\newcommand{\s}{\mathfrak{s}}
\newcommand{\spinc}{spin$^c$~}
\renewcommand{\t}{\mathfrak{t}}
\renewcommand{\epsilon}{\varepsilon}
\DeclareMathOperator{\Int}{Int}
\DeclareMathOperator{\im}{im}
\DeclareMathOperator{\Spinc}{Spin^c}
\DeclareMathOperator{\PD}{PD}
\DeclareSymbolFont{extraup}{U}{zavm}{m}{n}
\DeclareMathSymbol{\varspadesuit}{\mathalpha}{extraup}{85}
\newcommand{\e}{\emph}
\title{Cuspidal curves and Heegaard Floer homology}
\author{J\'ozsef Bodn\'ar, Daniele Celoria, Marco Golla}
\date{}
\thanks{Mathematics Subject Classification (2000): 14H50, 14B05, 57M25, 57R58.}
\begin{document}

\begin{abstract}
We give bounds on the gap functions of the singularities of a cuspidal plane curve of arbitrary genus, generalising recent work of Borodzik and Livingston. We apply these inequalities to unicuspidal curves whose singularity has one Puiseux pair: we prove two identities tying the parameters of the singularity, the genus, and the degree of the curve; we improve on some degree-multiplicity asymptotic inequalities; finally, we prove some finiteness results, we construct infinite families of examples, and in some cases we give an almost complete classification.
\end{abstract}

\maketitle

\section{Introduction}

A curve $C\subset \CP^2$ is cuspidal if each singular point admits a neighbourhood $U$ such that the intersection of the curve $C$ and the boundary of $U$ is connected. That is to say, this intersection, called the link of the singularity, is a knot.

Our focus will be on unicuspidal curves, that is, cuspidal curves with only one singular point. Recently, Borodzik and Livingston studied the rational case, i.e.~when the resolution of $C$ is a sphere, and obtained a strong constraint on some coefficients of the Alexander polynomials of the link of the singularity~\cite{BL}, proving a conjecture of Fern\'andez de Bobadilla, Luengo, Melle-Hernandez and N\'emethi~\cite{BLMN}.

We extend their result to prove an analogous result for arbitrary unicuspidal curves.

\begin{thm}\label{thm:main}
Suppose $C$ is a cuspidal curve of degree $d$ and genus $g$ with one singular point; let $I$ be the gap function associated to the singularity. Then for every $-1\le j\le d-2$ and every $0\le k \le g$ integers we have:
\[
k-g \le I_{jd+1-2k} - \frac{(d-j-2)(d-j-1)}2  \le k.
\]
\end{thm}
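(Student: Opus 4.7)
We adapt the Heegaard Floer proof of Borodzik--Livingston \cite{BL} from the rational case $g=0$ to arbitrary genus. The underlying strategy is to exhibit $\CP^2 \setminus \Int N(C)$ as a (negative\nobreakdash-)definite filling of a 3-manifold containing surgery information about the link of the singularity, and to translate the resulting Ozsv\'ath--Szab\'o $d$-invariant inequalities into statements about the gap function.

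\textbf{Step 1: Topological setup.} Let $K \subset S^3$ be the link of the cusp and $\Sigma_g$ the normalisation of $C$. A regular neighbourhood $N$ of $C$ in $\CP^2$ decomposes as the union of the cone on $(S^3, K)$ around the cusp and a disc bundle of Euler number $d^2$ over $\Sigma_g \setminus D^2$ over the smooth part. A direct Kirby-calculus computation shows that the boundary $Y := \partial N$ is obtained by $d^2$-framed surgery, performed inside $\#^{2g}(S^1\times S^2)$, on the connect-sum of $K$ with the standard genus-$g$ Borromean-type knot coming from the handle decomposition of $\Sigma_g \times D^2$. The complement $W := \CP^2 \setminus \Int N$ is a compact smooth 4-manifold with $\partial W = -Y$, trivial $H_1$, and negative-definite intersection form.

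\textbf{Step 2: From $d$-invariants to the gap function.} For each $-1 \le j \le d-2$ and $0 \le k \le g$ we pick a \spinc structure $\s_{j,k}$ on $Y$ together with an extension to $W$ whose first Chern class is designed so that the Chern-number square differs from $-(d-j-2)(d-j-1)$ by an explicit quantity linear in $k$. Applying the Ozsv\'ath--Szab\'o inequality for negative-definite fillings to $W$ bounds $d(Y, \s_{j,k})$ from below. Combining with the integer-surgery formula for knots in $\#^{2g}(S^1 \times S^2)$, which expresses $d(Y, \s_{j,k})$ as $-2V_{i(j,k)}(K)$ plus an explicit rational shift (for a linear index $i(j,k)$ depending on $j$ and $k$), yields an upper bound on $V_{i(j,k)}(K)$; the trivial lower bound $V_{i(j,k)}(K) \ge 0$ is the source of the other half of the inequality. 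Finally, the identification of the $V$-invariants of an algebraic knot with its gap function (Borodzik--N\'emethi) converts these two-sided bounds on $V_{i(j,k)}(K)$ into the asserted two-sided bound on $I_{jd+1-2k}$.

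\textbf{Main obstacle.} The new technical difficulty beyond \cite{BL} lies entirely in Step 1. In the rational case the boundary is the lens space $L(d^2, -1)$ and $W$ is essentially $\CP^2$ minus a disc bundle, so both the topology and the Heegaard Floer package are transparent. For $g > 0$, one must carefully track the $2g$ extra one-handles introduced by the normalisation, identify the resulting surgery description of $Y$ inside $\#^{2g}(S^1\times S^2)$, and verify that $W$ remains negative-definite after the corresponding contributions to $b_2$. It is precisely the interplay of these one-handles with the choice of \spinc extension that produces the window of $g+1$ admissible values of $k$, and hence the gap-$g$ discrepancy between the lower and upper bounds in the theorem.
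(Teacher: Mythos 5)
Your overall strategy matches the paper's, but one step as stated would fail and two details are materially off.

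The main error is in Step 2, where you claim that the trivial inequality $V_{i(j,k)}(K)\ge 0$ supplies the lower bound $k-g\le I_{jd+1-2k}-\frac{(d-j-2)(d-j-1)}2$. This cannot work: specialising to $g=0$ the theorem forces $I_{jd+1}=\frac{(d-j-2)(d-j-1)}2$, i.e.\ a \emph{positive} lower bound on $I_{jd+1}=V_{jd+1-\delta}$, whereas $V\ge 0$ is vacuous. In the paper's argument both halves of the two-sided bound come from the Ozsv\'ath--Szab\'o inequality (their Theorem~9.15) applied to $W_C$ \emph{and to its orientation reversal $-W_C$}. The application to $-W_C$ is the source of the lower bound on $V$, and it requires a separate surgery-formula computation of $d_b(-Y_C,\t_m)$ using the small complex $C^s_m$ and the relation $V'_m(m(K))=V_m(K)+m$ (Lemma~\ref{lemma:Hm} in the paper). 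Without this second half, the argument only gives the upper bound.

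Two smaller but substantive issues. First, $W_C$ is not negative definite: by Lemma~\ref{l:homology} one has $b_2(W_C)=2g$ and $b_2^\pm(W_C)=0$, so $W_C$ is negative \emph{semi}definite with a $2g$-dimensional nullspace (generated by rim tori). This is precisely why the appropriate Ozsv\'ath--Szab\'o inequality is the semidefinite one, which involves the \emph{bottom-most} correction term $d_b$ and the term $2b_1(Y_C)=4g$ in~\eqref{eqn:d-ineq}, and why $d_b(\pm Y_C,\t_m)$ must be computed from the kernel of the $H_1$-action on $CFK^\infty$ of $K_C\#K_B$. Second, and relatedly, you underestimate the Heegaard Floer content: the ``interplay of one-handles with \spinc extensions'' does not by itself produce the window of $g+1$ values of $k$; this window emerges from the explicit description of $\ker(H_1$-action$)$ on the Borromean knot's complex (generated by $x=\prod(1+U\otimes a_i\wedge b_i)$, whose graded pieces $x_k$ live in distinct Alexander degrees $-g+2k$ for $0\le k\le g$). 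That computation, in Proposition~\ref{p:dbottom}, is the genuinely new ingredient relative to~\cite{BL}, not just the handle-decomposition bookkeeping of Step~1.
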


There is a generalisation for curves with more cusps, stated as Theorem~\ref{t:morecusps} below.

We then turn to the numerical study of 1-\e{unicuspidal} curves, i.e.~curves with one cusp singularity that has only one Puiseux pair (that is, its link is a torus knot rather than an iterated torus knot).

\begin{thm}\label{thm:pell}

Fix a positive integer $g \geq 1$. Let $C$ be a 1-unicuspidal curve of genus $g$ and degree $d$ whose singularity is of type $(a,b)$. Then, if $d$ is sufficiently large,
\begin{equation}\label{eq:deg}
 a + b = 3d
\end{equation}
or, equivalently,
\begin{equation}\label{eq:pell}
\left(\frac{7b-2a}{3}\right)^2 - 5b^2 = 4(2g-1).
\end{equation}
\end{thm}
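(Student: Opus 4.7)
The plan is to reduce \eqref{eq:pell} to the linear identity \eqref{eq:deg} via the genus formula, and then to establish \eqref{eq:deg} by applying Theorem~\ref{thm:main} at the two extreme values $k=0$ and $k=g$.

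For the reduction, the genus formula for a unicuspidal $(a,b)$-curve reads $2g = (d-1)(d-2) - (a-1)(b-1)$. Rewriting \eqref{eq:pell} as $(a+b)^2 - 9ab = 9(2g-1)$ and eliminating $ab$ via the genus formula yields a quadratic in $a+b$ whose positive root is $3d$; conversely, $a+b=3d$ together with the genus formula gives $ab = d^2 - (2g-1)$, from which \eqref{eq:pell} follows by a direct expansion. So it suffices to prove \eqref{eq:deg}.

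For the torus knot $T(a,b)$ the gap function is explicit: $I_n = n - R(n)$, where $R(n) = \#\{(s,t) \in \Z_{\geq 0}^2 : sa+tb<n\}$ counts the elements of the semigroup $\langle a,b\rangle$ below $n$. Applying Theorem~\ref{thm:main} with $k=0$ and with $k=g$, and using that $I$ is non-decreasing with unit jumps, yields for every $-1 \leq j \leq d-2$
\[ \frac{(d-j-2)(d-j-1)}{2} - g \;\leq\; I_{jd+1-2g} \;\leq\; I_{jd+1} \;\leq\; \frac{(d-j-2)(d-j-1)}{2}, \]
so that $R(jd+1)$ is pinned to within $g$ of an explicit quadratic function of $j$ for every admissible $j$. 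The leading asymptotic of $R(n)$ is $n^2/(2ab)$; matching the full quadratic prescribed by the sandwich above and reading off its coefficients would yield $ab = d^2 + O(1)$ (already forced by the genus formula) and $a+b = 3d + O(g/d)$. Since $a+b$ is an integer and $g$ is fixed, for $d$ sufficiently large we would conclude $a+b = 3d$ exactly.

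The main obstacle is making the $O(g/d)$ error in the extraction of $a+b$ honest: the floor-function terms in the closed form for $R$ contribute sawtooth noise of size $O(1)$ at each individual $j$, so naively one obtains only $a+b = 3d + O(1)$. I would handle this by summing or averaging the $d-1$ inequalities over $j$ in order to cancel the oscillatory part, and by exploiting the involution $n \mapsto 2\delta - n$ (under which $I_n + I_{2\delta-n} = n - \delta$ for torus knots, where $\delta = (a-1)(b-1)/2$) to pair off the values at opposite ends of the $j$-range. A complementary use of Theorem~\ref{thm:main} for intermediate $k$ with $1 \leq k \leq g-1$ tightens the sandwich further and should make the asymptotic analysis unambiguous.
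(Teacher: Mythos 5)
Your plan to deduce $a+b=3d$ by matching the quadratic-in-$j$ growth of $R(jd+1)$ against $\Delta_j=\frac{(j+1)(j+2)}2$ and ``reading off'' the linear coefficient has a concrete obstruction: it cannot work from Theorem~\ref{thm:main} alone, and the crucial step is left entirely unexecuted. The paper itself records the counterexample (see the introduction and Case~VII of Proposition~\ref{p:67}): for $g=1$ the family $(a,b)=(l,9l+1)$ with $d=3l$ satisfies the degree--genus formula and \emph{every} inequality of Theorem~\ref{thm:main}, yet $a+b-3d=l+1\to\infty$. So your asserted conclusion $a+b=3d+O(g/d)$ is false as a consequence of the sandwich; excluding this family requires Orevkov's degree--multiplicity inequality, which enters the paper's proof through the reference to Borodzik--Hedden--Livingston. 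Any proof that uses only the gap-function bounds must fail here.

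Beyond that single family, the asymptotic extraction itself is not justified. The ``sawtooth'' term in the lattice-point count for $R(n)$ is a genuine bounded but non-vanishing oscillation depending on $n\bmod a$ and $n\bmod b$; summing the $d-1$ inequalities over $j$ does not obviously cancel it, because each inequality already carries a one-sided slack of size $g$, so the averaged bound has error $O(g)$, not $O(1)$, and certainly not $O(g/d)$. You flag this as ``the main obstacle'' and gesture at a fix, but the fix is the entire content of the theorem and none of it is carried out. The paper's actual argument is structurally different: it does not average. Proposition~\ref{p:67} isolates the ratio $b/a$ into the interval $(6,7)$ by applying \eqref{eq:sem_ineq} at a small number of hand-chosen $j$'s (and then at families $j=6k+4,\,6k+17,\dots$ with $k\to\infty$ to sharpen rational slopes), reducing each remaining case to a \emph{linear} relation between $a$ and $b$ which is then killed by requiring $4(a-1)(b-1)+8g+1$ to be a perfect square. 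Proposition~\ref{p:3d} then decomposes the region $6<b/a<7$ into Fibonacci sectors, bounds the candidates in each sector via Lemma~\ref{lem:bound}, and shows the bounds shrink below the gap between the hyperbolae $\gamma_0$ and $\gamma_1$ (Lemma~\ref{lem:limzero}). These discrete, case-specific estimates are what replace your hoped-for coefficient matching, and they are where all the work lies.

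Your reduction of \eqref{eq:pell} to \eqref{eq:deg} via the degree--genus formula is correct and matches the paper (it is a one-line substitution at the end of the proof of Theorem~\ref{thm:pell}), but that was the easy direction.
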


Theorem~\ref{thm:pell} imposes strong restrictions on pairs $(a,b)$ that can be realised as Puiseux pairs of the singularity of a plane unicuspidal curve of genus $g$, in the spirit of~\cite{BLMN1}. In particular, we obtain the following two corollaries.

\begin{cor}\label{c:finite}
For all genera $g$ with $g\equiv 2 \pmod 5$ or $g\equiv 4 \pmod 5$ there are only finitely many 1-unicuspidal, genus-$g$ curves up to equisingularity.
\end{cor}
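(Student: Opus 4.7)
The plan is to invoke Theorem~\ref{thm:pell} and then analyze the resulting Pell-type equation by reduction modulo $5$. By that theorem there is some threshold $d_0 = d_0(g)$ such that any $1$-unicuspidal curve of genus $g$ and degree $d \geq d_0$ whose singularity has Puiseux pair $(a,b)$ yields integers $X := (7b-2a)/3$ and $Y := b$ satisfying
\[
X^2 - 5Y^2 = 4(2g-1).
\]
(The integrality of $X$ follows from $a + b = 3d$, which is also guaranteed by Theorem~\ref{thm:pell}.)

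Next I would reduce this relation modulo $5$: the $Y$-term vanishes and the congruence becomes $X^2 \equiv 8g - 4 \pmod 5$. A direct check shows $8g-4 \equiv 2 \pmod 5$ when $g \equiv 2 \pmod 5$, and $8g-4 \equiv 3 \pmod 5$ when $g \equiv 4 \pmod 5$. Since the quadratic residues modulo $5$ are exactly $\{0,1,4\}$, neither residue is attained. Hence in both cases the Pell equation admits no integer solutions whatsoever, and so no $1$-unicuspidal curve of genus $g$ and degree $d \geq d_0$ can exist.

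It then remains to bound the number of equisingularity types occurring in the finite range $d < d_0$. For each such $d$ the Puiseux pair $(a,b)$ is determined up to finite ambiguity by the genus formula $(a-1)(b-1) = (d-1)(d-2) - 2g$, together with $\gcd(a,b)=1$; this has only finitely many solutions in positive integers, and summing over the finitely many admissible $d$ gives the desired finiteness.

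I do not anticipate a serious obstacle here: all the geometric content has been placed in Theorem~\ref{thm:pell}, and the corollary collapses to the observation that $2$ and $3$ are non-residues modulo $5$. The only point requiring mild care is the book-keeping for $d < d_0$, but this is handled by the genus formula.
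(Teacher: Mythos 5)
Your proof is correct and follows the same route as the paper: invoke Theorem~\ref{thm:pell}, reduce Equation~\eqref{eq:pell} modulo $5$, and observe that $2$ and $3$ are not quadratic residues mod $5$. The only addition is that you spell out the finiteness bookkeeping for degrees below the threshold $d_0$ via the degree-genus formula, which the paper leaves implicit; the core argument is identical.
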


The second corollary is a degree-multiplicity inequality in the spirit of Matsuoka--Sakai~\cite{MS} and Orevkov~\cite{Or}. For convenience let $\phi$ denote the golden ratio $\phi = \frac{1+\sqrt5}2$.

\begin{cor}\label{c:acc}
Let $g\ge 1$. Then there exists a constant $c$ such that
\[ \phi^2 a - c < d < \phi^2 a + c \]
for every 1-unicuspidal genus-g curve of degree $d$ with Puiseux pair $(a,b)$.
\end{cor}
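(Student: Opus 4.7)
The plan is to combine the two equivalent conclusions of Theorem~\ref{thm:pell} into a Pell-type equation in $Y := b-a$ and $d$. For each fixed $g$, the genus formula $2g = (d-1)(d-2) - (a-1)(b-1)$ admits only finitely many pairs $(a,b)$ for each $d$, so any bounded range of $d$ contributes only finitely many triples $(d,a,b)$; these can be absorbed into the constant $c$. For $d$ sufficiently large, Theorem~\ref{thm:pell} gives $a+b=3d$, and combining this with the genus formula (or equivalently, substituting $a = 3d-b$ into $X^2 - 5b^2 = 4(2g-1)$) yields
\[ Y^2 - 5d^2 = 4(2g-1). \]

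Setting $\eta := Y - \sqrt{5}\,d$, the factorisation $\eta\cdot(Y+\sqrt{5}\,d) = 4(2g-1)$ together with $Y,d \to \infty$ as $d\to\infty$ implies $|\eta| = O(1/d)$ with implied constant depending only on $g$; in particular $|\eta|$ is uniformly bounded. Using the identity $\phi^{-2} = (3-\sqrt{5})/2$, I would then rewrite
\[ a \;=\; \frac{3d-Y}{2} \;=\; \phi^{-2}\,d - \frac{\eta}{2}, \qquad\text{so that}\qquad d - \phi^2 a \;=\; \frac{\phi^2\,\eta}{2}, \]
which is bounded in absolute value by a constant depending only on $g$. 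The main obstacle is spotting the golden-ratio identity $\phi^{-2} = (3-\sqrt{5})/2$, which makes the limiting slope $d/a\to\phi^2$ transparent; once this is in hand, the rest is a routine Pell-equation estimate plus one line of algebra. Essentially, Theorem~\ref{thm:pell} already locates the lattice points $(a,d)$ on a conic asymptotic to the line $d = \phi^2 a$, and the Pell factorisation controls how quickly they approach it.
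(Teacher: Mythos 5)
Your proposal is correct and follows essentially the same route as the paper: both reduce to the degree-genus relation combined with $a+b=3d$, which after the substitution amounts to the Pell relation (with $Y=b-a$ equal to the paper's $\sqrt{5d^2+4(2g-1)}$), and both conclude by observing that this forces the lattice point onto a conic asymptotic to the line $d=\phi^2 a$. Your factorisation $\eta(Y+\sqrt{5}d)=4(2g-1)$ simply makes explicit the boundedness that the paper asserts after writing $a=\tfrac{3d-\sqrt{5d^2+4(2g-1)}}{2}$ (there the Bézout bound $a\le d$ is what picks out the minus sign in the quadratic formula), so the two arguments are equivalent up to bookkeeping.
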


\begin{rmk}
The case $g=0$ is excluded in Theorem~\ref{thm:pell}: singularities of 1-unicuspidal rational curves have been classified in~\cite{BLMN1}, and the result does not hold in this case. However, applying Theorem~\ref{thm:main} (which, as pointed out above, is the main theorem of~\cite{BL}) we can recover the four infinite families of singularities obtained in~\cite{BLMN1} (see Remarks~\ref{rmk:g01stprop} and~\ref{rmk:g02ndprop}).

The proof of Theorem~\ref{thm:pell} relies almost exclusively on Theorem~\ref{thm:main}, except when $g = 1$. In that case, Theorem~\ref{thm:main} alone cannot exclude the family $(a,b) = (l, 9l+1)$ with $d = 3l$ (Case VII in the proof of Proposition~\ref{p:67}). This family can be excluded using an inequality due to Orevkov~\cite{Or}, as pointed out by Borodzik, Hedden and Livingston~\cite{BHL} (see also Remark~\ref{rem:parallel} below).
\end{rmk}

\begin{rmk} \label{rem:openq1}
Corollary~\ref{c:acc} in particular shows that an analogue of Orevkov's asymptotic inequality between the multiplicity and the degree  holds true for any fixed genus $g$ in the special case of 1-unicuspidal curves. Even more surprisingly, in the 1-unicuspidal case, for any fixed genus $g\ge1$ an asymptotic inequality \emph{in the opposite direction} holds as well.
\end{rmk}

Finally, we construct an infinite family of 1-unicuspidal curves for each triangular genus. We set up some notation first. Given an integer $k$, denote with $(L^k_n)_{n\in\Z}$ the Lucas sequence defined by the data $L^k_0 = k-1, L^k_1 = 1$ and the recurrence $L^k_{n+1} = L^k_n + L^k_{n-1}$. Notice that $n$ varies among integers rather than positive integers.

It is easy to check that for every $i \geq 2$ the pair $(a, b) = (L_{4i-3}^{k}, L_{4i+1}^{k})$ is a solution of~\eqref{eq:pell} if $g = k(k-1)/2$. In this case, the degree is $d = L^k_{4i-1}$. Also, for $j \geq 1$ the pair $(a, b) = (-L_{-4j+1}^{k}, -L_{-4j-3}^{k})$ is a solution of~\eqref{eq:pell} if $g = k(k-1)/2$. In this case, the degree is $d = -L^k_{-4j-1}$.

\begin{thm}\label{t:construction}
Let $k\ge2$ be an integer, and define $g = k(k-1)/2$. For each $i \geq 2$ there exists a unicuspidal projective plane curve of genus $g$ and degree $d = L_{4i-1}^{k}$ such that the singularity has one Puiseux pair $(a, b) = (L_{4i-3}^{k}, L_{4i+1}^{k})$. Similarly, for each $j \geq 1$ there exists a unicuspidal projective plane curve of genus $g$ and degree $d = -L_{-4j-1}^{k}$ such that the singularity has one Puiseux pair $(a, b) = (-L_{-4j+1}^{k}, -L_{-4j-3}^{k})$.

Moreover, if $k\equiv 2\pmod 3$ and $2g-1$ is a power of a prime, then any $1$-unicuspidal curve of genus $g$ and sufficiently large degree has one of the singularities listed above.
\end{thm}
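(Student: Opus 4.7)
My approach is to split the proof into two parts: the explicit construction of the two families, and the classification under the hypotheses $k\equiv 2\pmod 3$ and $2g-1$ a prime power.

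For the construction, I would follow the paradigm of~\cite{BLMN1}, where the four Fibonacci families of rational 1-unicuspidal curves are obtained by iterating a standard quadratic Cremona transformation on simple base curves. For general $k$, the genus $g=k(k-1)/2$ equals the genus of a smooth plane curve of degree $k+1$, and this is the natural candidate for the geometric input. I would take such a curve with a tangent line of high contact order at a point $p$, and apply a Cremona transformation based at $p$ and at two other carefully chosen points. A direct computation on the proper transforms should show that each iteration produces a new 1-unicuspidal curve whose degree and Puiseux pair advance by four indices in the Lucas sequence $L^k$, consistently with the recursion $L^k_{n+4}=3L^k_{n+2}-L^k_n$ (which reflects $\phi^4=3\phi^2-1$). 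Running the iteration in the two possible directions, from base cases corresponding to $i=2$ and $j=1$, yields the two claimed families. The technical point is to verify that no extra singularities appear along the way, which can be handled by an explicit local model of the Cremona transformation at each base point combined with a genus computation.

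For the classification I would apply Theorem~\ref{thm:pell}: every 1-unicuspidal curve of genus $g$ and sufficiently large degree satisfies $a+b=3d$ and the Pell-type identity~\eqref{eq:pell}. Setting $X=(7b-2a)/3$, the latter becomes
\[
X^2 - 5b^2 = 4(2g-1),
\]
a norm equation in the ring of integers $\Z[\phi]$ of $\mathbb{Q}(\sqrt 5)$. The fundamental unit $\phi$ has norm $-1$, so $\phi^2$ generates the totally positive units and acts on the solution set with finitely many orbits, one for each principal ideal of norm $2g-1$. The hypothesis $k\equiv 2\pmod 3$ gives $2g-1\equiv 1\pmod 3$, which is precisely what is needed for $X=(7b-2a)/3$ to be integral and to exclude solutions with $3\nmid 7b-2a$. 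Under the further hypothesis that $2g-1$ is a prime power, unique factorisation in the PID $\Z[\phi]$ restricts the possible ideals to at most two conjugate ones, yielding at most two $\phi^2$-orbits of solutions (up to Galois and sign). I would then check that these two orbits correspond precisely to the two constructed families, by matching the $\phi^2$-action on $\Z[\phi]$ with the four-step shift in the Lucas sequence and identifying the base points of the orbits with $(a,b,d)=(L^k_5,L^k_9,L^k_7)$ and $(3k-5,21k-34,8k-13)$ respectively.

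I expect the construction to be the main obstacle: proving that the iterated Cremona transformations preserve 1-unicuspidality and produce exactly the claimed numerical invariants requires careful local control near the base points, together with an explicit description of the small-degree base curves for $i=2$ and $j=1$. By contrast, the classification is essentially classical number theory once Theorem~\ref{thm:pell} is in hand; the only delicate point is to verify that the combination of the prime-power hypothesis on $2g-1$ and the congruence condition on $k$ leaves exactly the two $\phi^2$-orbits corresponding to the constructed families and no further spurious solutions.
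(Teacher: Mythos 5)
Your high-level plan (construct the two families by iterating Cremona transformations, then classify via the norm equation in $\Z[\phi]$) matches the paper's strategy, but the construction half contains a genuine gap. You propose a \emph{standard quadratic} Cremona transformation based at three proper points, applied to a smooth degree-$(k+1)$ curve meeting a \emph{tangent line} with high contact. Neither ingredient can give the required numerics. First, a line can have contact order at most $d$ with a smooth degree-$d$ curve, whereas the construction needs the curve to meet the auxiliary curve with local intersection multiplicity $3d$ at a single point; by B\'ezout this forces the auxiliary curve to be a (nodal) \emph{cubic}, and the curve must pass through its node, touching the two branches with multiplicities $a$ and $b$ with $a+b=3d$ (this is Lemma~\ref{lem:smoothatnode} and Definition~\ref{d:sweep}, whose existence is itself a nontrivial Bertini argument you have glossed over). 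Second, a standard quadratic transformation sends degree $d$ to $2d-m_1-m_2-m_3$ with $m_i\le d$, so the degree at most doubles per step; the Lucas recursion you need advances by the factor $\phi^4\approx 6.85$. The correct tool is Orevkov's birational map $f^N_i$ (Proposition~\ref{prop:orevkovtrick}): blow up \emph{seven infinitely close points} along one branch of $N$ at its node, then blow down the proper transform of $N$ together with six exceptional divisors. Only this realizes the jump $(a,b)\mapsto(b,7b-a)$ and $d\mapsto 3b-d$ corresponding to multiplication by $\phi^4$ in $\Z[\phi]$, and produces a new nodal cubic for the next step. The claim that a ``direct computation on the proper transforms'' under a quadratic transformation yields the four-index shift is therefore not just unverified but false.

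On the classification half you have the right idea, but one framing issue: when $2g-1$ is a prime power there is (up to conjugation and units) a \emph{single} generating solution $\beta$ of the norm equation, not ``at most two conjugate ones giving two $\phi^2$-orbits''. The congruence $g\equiv 1\pmod 3$ (equivalently $k\equiv 2\pmod 3$) is what cuts the $\phi^2$-orbit of coprime solutions of $x^2-5y^2=4(2g-1)$ down to those with $3\nmid y$, which form a single $\phi^4$-orbit. Your two ``base points'' $(L^k_5,L^k_9,L^k_7)$ and $(3k-5,21k-34,8k-13)=(-L^k_{-3},-L^k_{-7},-L^k_{-5})$ actually lie in that \emph{same} $\phi^4$-orbit; they are just the positive- and negative-index halves of it, which is why the theorem lists two families of constructed curves even though there is only one orbit of admissible Puiseux pairs. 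With that correction, and with Proposition~\ref{prop:abcoprime} and Remark~\ref{r:uniqueness} from the paper supplying the precise arithmetic statement, this part of your argument is sound.
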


\begin{rmk}\label{rem:parallel}
Upon finishing this manuscript, we learned that Theorem~\ref{thm:main} was independently proved by Maciej Borodzik, Matthew Hedden and Charles Livingston~\cite{BHL}. The applications they have, however, are different: they classify singular curves of genus 1 having degree larger than 33 and one cusp with one Puiseux pair.
\end{rmk}

\subsection*{Organisation of the paper}
In Section~\ref{s:plane-curves} we recall some notation regarding complex plane curves and their singularities; in Section~\ref{s:top} we review the topological setup, and in Section~\ref{s:HF} we review some necessary background in Heegaard Floer homology and work out some auxiliary computations.
Section~\ref{s:mainproof} is devoted to the proof of Theorem~\ref{thm:main}, while Theorem~\ref{thm:pell}, Corollaries~\ref{c:finite} and~\ref{c:acc} are proved in Section~\ref{s:acc}.
Section~\ref{s:pell} is a short trip in number theory, where we study the solutions of Equation~\eqref{eq:pell}. Finally, we prove Theorem~\ref{t:construction} and give some examples in Section~\ref{s:examples}.

\subsection*{Acknowledgements}
We would like to thank: the referee, for many helpful comments and suggestions; Paolo Lisca, Andr\'as N\'emethi, and Andr\'as Stipsicz for interesting conversations; Gabriele Dalla Torre for useful comments on Section~\ref{s:pell}. The first author is supported by ERC grant LDTBud at MTA Alfr\'ed R\'enyi Institute of Mathematics. The second author is supported by the PhD school of the University of Florence. The third author has been supported by the EU Advanced Grant LDTBud, a CAST Exchange Grant, the PRIN--MIUR research project 2010--11 ``Variet\`a reali e complesse: geometria, topologia e analisi armonica'', and the FIRB research project ``Topologia e geometria di variet\`a in bassa dimensione''.

\section{Plane curves}\label{s:plane-curves}
The discussion in this section closely follows~\cite[Section 2]{BL}; we also refer the reader to the classical books~\cite{Milnor, Wall} for further information.

Recall that the zero-set of a nonzero homogeneous polynomial $f\in\C[x,y,z]$ of degree $d$ gives rise to a \emph{plane curve} $C=V(f)\subset\CP^2$; we call $d$ the \e{degree} of the curve $C$.
The set where the gradient of $f$ vanishes along $C$ is called the \emph{singular set}, which is a discrete set provided $f$ does not have multiple components.

Consider a small ball $B$ centered at a singular point $p\in C$. The link of $C$ at $p$ is the isotopy class of the intersection $\de B \cap C$, that is the isotopy class of a link in $S^3$. We say that $p$ is a \emph{cusp} if such intersection is connected. In this case, the intersection is a \e{nontrivial} knot $K$, and the \e{Milnor number} $2\delta$ of the singularity is twice the Seifert genus of $K$, i.e.~$\delta = g(K)$.

A curve is called \emph{cuspidal} if all of its singular points are cusps. A cuspidal curve is homeomorphic to a topological surface of genus $g$. 
Recall that the degree-genus formula yields $\sum_i\delta_i + g = \frac{(d-1)(d-2)}2$ (see, for example, \cite[Section II.11]{Barth}).

Given a singular point $p$, we consider the set $\Gamma\subset\Z$ defined as follows: $\Gamma$ is the set of local multiplicities of intersections of germs of complex curves with $C$ at $p$. It is easy to see that $\Gamma$ is closed under addition and contains $0$, and is called the \e{semigroup} of the singularity. We denote with $G = \Z\setminus\Gamma$ the set of \e{gaps} of the semigroup $\Gamma$.

Associated to the semigroup $\Gamma$ are the \e{semigroup counting function} $R\colon\Z\to \Z$ and the \e{gap counting function} $I\colon\Z\to \Z$, defined by
\begin{align*}
R_m &= \#(\Gamma\cap(-\infty,m-1]),
\\
I_m &= \#(G\cap[m,+\infty)).
\end{align*}
For example, for every singularity we have that $R_1 = 1$ and $I_0=\delta$. Moreover, it is always the case that $\max G = 2\delta-1$, so that $I_m = 0$ if $m\ge 2\delta$ and $R_{2\delta} = \delta$.

Later we will also use the notation $I(m)$ instead of $I_m$ for convenience.

Recall from~\cite[Lemma 6.2]{BL} that we have
\begin{equation}\label{e:RI}
 R_m = m - \delta + I_m,
\end{equation}
as a corollary of the symmetry property of the semigroup.

Every germ of a curve singularity can be parametrised, in an appropriate chart, by a function $t\mapsto (t^a, t^{b_1}+\dots+t^{b_m})$, where $1<a<b_1<\dots<b_m$ are positive integers such that $\gcd(a,b_1,\dots,b_k)$ does not divide $b_{k+1}$ for every $0\le k \le m-1$ and $\gcd(a,b_1,\dots,b_m) = 1$. We will say that the singularity has one \e{Puiseux pair} if $m = 1$, and we will say that the singular point is of \e{type} $(a,b)$.

\begin{ex}
When the singularity is of type $(a,b)$ the link of the singularity is isotopic to a torus knot $T(a,b)$. The semigroup of the singularity in this case is generated by $a$ and $b$: these are the multiplicities of intersection of $C$ with the coordinate planes in the chart given above, where $C$ is defined by the equation $y^a-x^b=0$. Accordingly, $\delta = (a-1)(b-1)/2$.
\end{ex} % intro and generalities on curves
\section{Topology}\label{s:top}

Let $C\subset \CP^2$ be a cuspidal curve of degree $d$ and genus $g$, and let $p_1,\dots,p_n$ be its singular points. 
We want to give a handle decomposition of a regular neighbourhood $N_C$ of $C$ and a description of the algebraic topology of its complement $-W_C$, i.e.~$W_C = -(\CP^2 \setminus \Int N_C)$.
This will in turn give a surgery description for the boundary $Y_C$ of $W_C$: $Y_C = \de N_C = \de W_C$.

For each $i$ fix a small 4-ball neighourhood $B_i$ of $p_i$ in $\CP^2$. The intersection $\de B_i \cap C$ is isotopic to the link $K_i$ of the singular point $p_i$; denote with $2\delta_i$ its Milnor number.

Now fix a regular 3-ball neighbourhood $D_i\subset\de B_i$ of a point of $K_i$ for each $i$, so that $D_i\cap K_i$ is an unknotted arc in $D_i$. Fix another regular 3-ball neighbourhood $D\subset \de B_1\setminus D_1$ of a point of $K_1$ that intersects $K_1$ in an unknotted arc. Finally, fix a topological handle decomposition of $C$ with the following properties (see Figure~\ref{f:handles}):

\begin{itemize}
\item there are only $n$ 0-handles $B_i\cap C$ and one 2-handle;
\item there are $2g$ 1-handles, whose feet $q_1, q'_1, \dots, q_{2g}, q'_{2g}$ land in $K_1\cap D$;
\item the order of the points $q_i, q'_i$ along the arc $K_1\cap D$ is $q_1,q_2,q'_1,q'_2,\dots$ $q_{2g-1},q_{2g},q'_{2g-1},q'_{2g}$;
\item there are $n-1$ 1-handles whose feet land in the union of the discs $D_i$.
\end{itemize}

\begin{figure}[h]
\labellist
\small\hair 2pt
\pinlabel $B_1$ at 0 50
\pinlabel $D$ at -5 125
\pinlabel $D_1$ at 70 150
\pinlabel $B_2$ at 220 0
\pinlabel $D_2$ at 110 180
\pinlabel $B_3$ at 220 100
\pinlabel $D_3$ at 110 35
\endlabellist
\centering
\includegraphics[scale=.9]{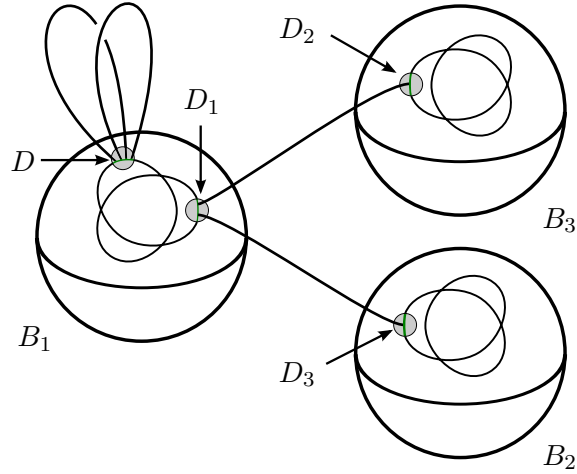}
\caption{A schematic picture of the 0- and 1-handles of the handle decompositions of $C$ and $U$ of Section~\ref{s:top}. It represents the case of a genus-1 curve with three cusps.}\label{f:handles}
\end{figure}

Now, fix a regular neighbourhood $U$ of the union of the balls $B_i$ and the cores of the 1-handles of the decomposition in $\CP^2$. Notice that $U$ is a 4-dimensional 1-handlebody, and therefore $\de U$ is diffeomorphic to $\#^{2g} S^1\times S^2$.

Denote with $K_C$ the connected sum of the knots $K_i$, $K_C = K_1 \# \dots \# K_n$, and with $\delta$ its Seifert genus, which is equal to $\sum \delta_i$.

\begin{lemma}
The 4-manifold $N_C$ is obtained from $U$ by attaching a single 2-handle along the connected sum of $K_C\subset S^3$ and the Borromean knot $K_B$ in $\#^{2g}S^1\times S^2$ (described below), with framing $d^2$.
\end{lemma}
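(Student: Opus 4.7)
The strategy is to thicken the topological handle decomposition of $C$ into a smooth handle decomposition of $N_C$: each $k$-handle of $C$ thickens to a 4-dimensional $k$-handle of $N_C$. In this thickening the 0-handles $B_i\cap C$ become the balls $B_i$, each 1-handle of $C$ becomes a 4-dimensional 1-handle (a regular neighbourhood of its core in $\CP^2$), and the unique 2-handle of $C$ becomes a 4-dimensional 2-handle. By construction, the union of the 0- and 1-handles is exactly $U$, so $N_C$ is obtained from $U$ by attaching a single 2-handle along some framed knot in $\partial U = \#^{2g}(S^1\times S^2)$.

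To identify the attaching circle I would track the boundary of the 2-cell of $C$ as it is pushed into $\partial U$. Within each ball $B_i$ this curve traces the singularity link $K_i$, with a small arc inside $D_i$ replaced by the attaching region of one of the connecting 1-handles. The $n-1$ connecting 1-handles with feet in the discs $D_i$ therefore concatenate the $K_i$ into the connected sum $K_C = K_1 \# \cdots \# K_n$. The remaining $2g$ 1-handles, whose feet lie on $K_1 \cap D$ in the interleaved cyclic order $q_1, q_2, q'_1, q'_2, \dots, q_{2g-1}, q_{2g}, q'_{2g-1}, q'_{2g}$, add $2g$ bands along a short sub-arc of $K_1$. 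The interleaving pattern (within each pair $\{2i-1, 2i\}$ the two 1-handles are nested, and different pairs are unlinked) is precisely the standard band presentation of the Borromean knot $K_B\subset\#^{2g}(S^1\times S^2)$ described below, so the attaching circle is isotopic to $K_C \# K_B$.

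Finally, the framing of the 2-handle is determined by the Euler number of a tubular neighbourhood of $C$ in $\CP^2$, measured relative to the surface framing (the framing induced by pushing the attaching circle off along the 2-cell of $C$). Since the self-intersection $[C]^2$ in $\CP^2$ equals $d^2$, this Euler number is $d^2$. A direct check shows that the surface framing on $K_C \# K_B$ agrees with the Seifert framing in $\#^{2g}(S^1\times S^2)$: one assembles a Seifert surface for $K_C\# K_B$ of genus $g+\delta$ from the Seifert surfaces of the $K_i$ (glued through the connecting $D_i$) together with the standard genus-$g$ Seifert surface of $K_B$, and this surface realises the surface framing. The 2-handle is therefore attached with framing $d^2$. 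The main obstacle is the middle step: verifying that the prescribed interleaving of the $2g$ feet produces $K_B$ on the nose, and that the surface and Seifert framings coincide. Both are routine combinatorial checks using the explicit definition of $K_B$ recalled immediately after this lemma.
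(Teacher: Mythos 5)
Your overall strategy---thicken the handle decomposition so that $U$ becomes the $1$-skeleton of $N_C$, identify the attaching circle of the single $2$-handle as a connected sum through the separating spheres in $\partial U$, compute the framing from $[C]^2 = d^2$---is exactly the paper's. The genuine divergence is in how the genus-$g$ summand $K'$ is recognised as the Borromean knot $K_B$. You propose to match the interleaved band pattern against a ``standard band presentation of $K_B$,'' and you yourself flag this as the main obstacle; the paper avoids the combinatorics entirely by invoking the \emph{definition} of $K_B$ stated immediately after the lemma: $K_B$ is the boundary of $F\times\{*\}$ inside $\partial(F\times D^2)$, where $F$ is a once-punctured genus-$g$ surface. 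The paper simply observes that $K'$ bounds such a surface (a piece of $C$) whose tubular neighbourhood in $\CP^2$ is a product, so $K'=K_B$ with essentially no computation; the definitional phrasing is precisely what makes this a one-line step. One inaccuracy on your route: with the prescribed order $q_1,q_2,q'_1,q'_2,\ldots$ of the feet, the two $1$-handles in each pair are \emph{linked} (interleaved), not nested---nested feet would give a planar piece, not genus. Finally, your additional check that the surface framing agrees with the Seifert framing is a reasonable sanity check but is not needed for the paper's one-line argument, which only uses that the self-intersection of a closed surface in $N_C$ homologous to $C$ equals $d^2$.
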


The \e{Borromean knot} $K_B$ in $\#^{2g} S^1\times S^2$ is the boundary of the surface $F\times\{*\}$ inside (a smoothing of) the boundary of $F\times D^2$, where $F$ is the compact, once punctured surface of genus $g$ and $*$ is a point on $\de D^2$.
It is described by the Kirby diagram of Figure~\ref{f:borromean}.

\begin{figure}[h!]
\labellist
\small\hair 2pt
\pinlabel $\underbrace{\hphantom{------------------}}_g$ at 230 45
\pinlabel $K_B$ at -20 20
\endlabellist
\centering
\includegraphics[scale=.5]{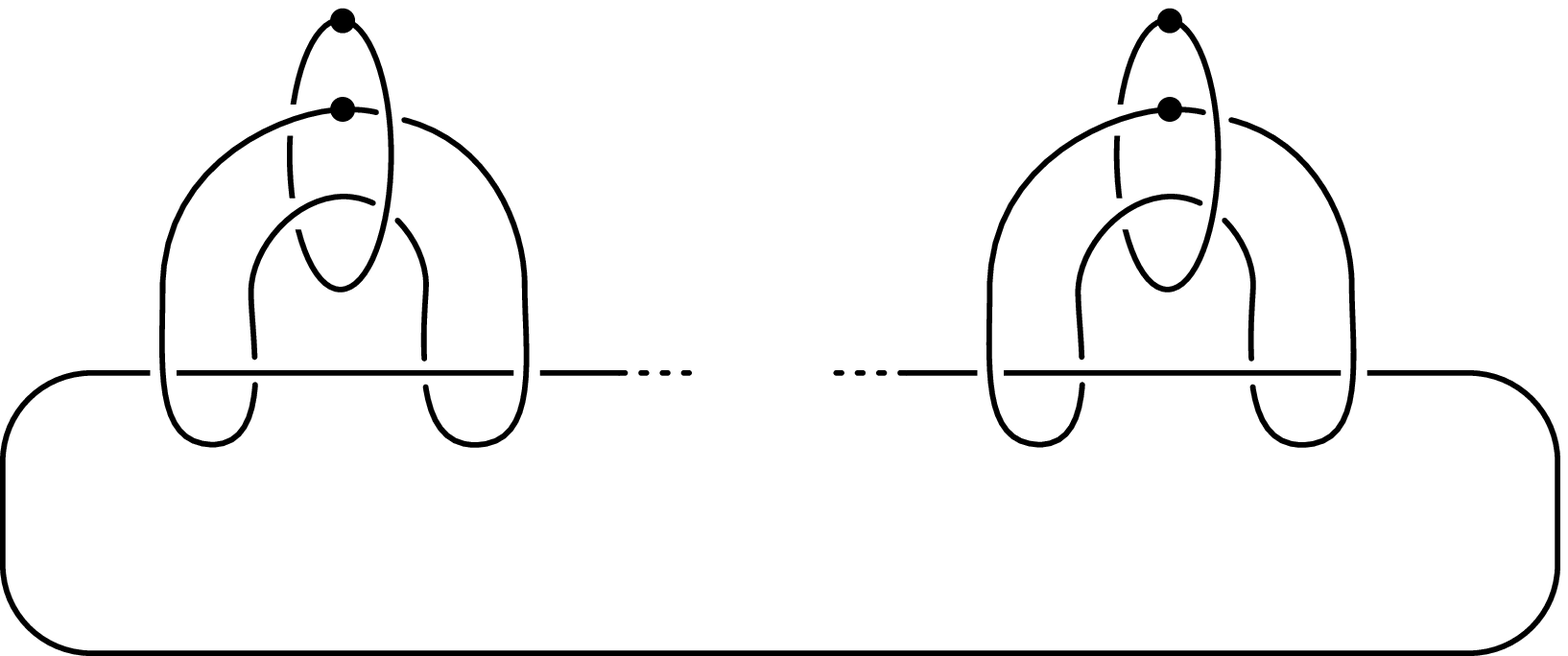}
\caption{A Kirby diagram for the Borromean knot in $\#^{2g}S^1\times S^2$.}\label{f:borromean}
\end{figure}

\begin{proof}
Call $S, S_1,\dots S_n$ the boundaries of the 3-balls $D, D_1, \dots, D_n$ respectively; these are $n+1$ 2-spheres in the 3-manifold $\de U$. $S, S_2, \dots, S_n$ separate $\de U$ into several pieces: $n$ 3-balls $\de B_i \setminus D_i$; a punctured $\#^{2g}S^1\times S^2$; a piece that retracts onto $\de B_1\setminus (D\cup D_1)$, which is homeomorphic to $S^2\times [0,1]$. Moreover, $C\cap \de U$ intersects each of the spheres $S, S_1,\dots S_n$ in two points, therefore exhibiting a decomposition of the attaching curve of the 2-handle as a connected sums of knots $K_1,\dots K_n$ in $\de B_1,\dots, \de B_n$ and a knot $K'$ in $\#^{2g}S^1\times S^2$.

By definition, each knot $K_i$ is the link of the singularity of $C$ at $p_i$. The knot $K'$ is easily seen to be the Borromean knot. In fact, $K'$ is the boundary of $C\setminus B$, which is a once punctured surface of genus $g$, and a neighbourhood of $C\setminus B$ inside $\CP^2$ is diffeomorphic to $(C\setminus B) \times D^2$.

The framing is determined by the self-intersection of a surface which is homologous to $C$, hence it is $d^2$.
\end{proof}

\begin{lemma}\label{l:homology}
The 3-manifold $Y_C$ has $H_1(Y_C;\Z) = \Z/d^2\Z\oplus \Z^{2g}$. The 4-manifold $W_C$ has $b_2^{\pm}(W_C) = 0$, $b_2(W_C) = 2g$ and $H_1(W_C;\Z) = \Z/d\Z$.
\end{lemma}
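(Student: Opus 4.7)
The plan is to work handle-by-handle: compute $H_*(N_C)$ from the given decomposition, pass to $Y_C = \partial N_C$ via the long exact sequence of the pair $(N_C, Y_C)$ and Lefschetz duality, and deduce $H_*(W_C)$ from Mayer--Vietoris applied to $\CP^2 = N_C \cup W_C$. The first observation is that the attaching circle $K = K_C \# K_B$ is null-homologous in $\partial U = \#^{2g} S^1 \times S^2$: indeed $K_C$ lives in a 3-ball, and $K_B$ is the boundary of a once-punctured genus-$g$ surface by construction. Thus the 2-handle imposes no new relation on $H_1(U) = \Z^{2g}$, giving $H_1(N_C) = \Z^{2g}$ and $H_2(N_C) = \Z$; since $N_C$ deformation retracts onto $C$, the generator is $[C]$, and its self-intersection $[C] \cdot [C] = d^2$ makes the intersection form of $N_C$ equal to $\langle d^2 \rangle$.

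Passing to $Y_C$, the long exact sequence of $(N_C, Y_C)$ together with the Lefschetz identification $H_k(N_C, Y_C) \cong H^{4-k}(N_C)$ reads, in the relevant range,
\[
H_2(Y_C) \to H_2(N_C) \to H^2(N_C) \to H_1(Y_C) \to H_1(N_C) \to 0,
\]
where the middle map is the intersection form $\langle d^2 \rangle$, i.e.\ multiplication by $d^2$ on $\Z$. Its injectivity forces $H_2(Y_C) \to H_2(N_C)$ to vanish and yields a splitting short exact sequence $0 \to \Z/d^2\Z \to H_1(Y_C) \to \Z^{2g} \to 0$. Hence $H_1(Y_C) = \Z/d^2\Z \oplus \Z^{2g}$, and by Poincar\'e duality for the closed 3-manifold $Y_C$ we also get $H_2(Y_C) = \Z^{2g}$.

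For $W_C$ I use Mayer--Vietoris on $\CP^2 = N_C \cup W_C$. A generic hyperplane in $\CP^2$ meets $C$ transversely in $d$ points, so its intersection with $W_C$ is a sphere with $d$ holes bounding $d$ parallel meridians of $C$; accordingly the connecting map $\partial \colon H_2(\CP^2) \to H_1(Y_C)$ sends the hyperplane class to $d[\mu]$, whose image has order $d$ inside the $\Z/d^2\Z$ summand. Exactness then produces $H_1(N_C) \oplus H_1(W_C) = \Z/d\Z \oplus \Z^{2g}$, and cancellation of the $\Z^{2g}$ factor gives $H_1(W_C) = \Z/d\Z$. Euler characteristic additivity $\chi(\CP^2) = \chi(N_C) + \chi(W_C)$ (using $\chi(Y_C) = 0$) gives $\chi(W_C) = 1 + 2g$; combined with $b_1(W_C) = 0$ and the vanishing of $b_3(W_C)$ and $b_4(W_C)$ (the latter being immediate, the former following from another piece of Mayer--Vietoris), this forces $b_2(W_C) = 2g$.

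The subtle point, and the main obstacle, is the vanishing $b_2^\pm(W_C) = 0$. The strategy is to show that the intersection form of $W_C$ vanishes on $H_2(W_C; \Q)$. From the second step the map $H_2(Y_C) \to H_2(N_C)$ is zero, so Mayer--Vietoris forces $H_2(Y_C) \hookrightarrow H_2(W_C)$; since both have rank $2g$, this injection is a rational isomorphism. But any class of $H_2(W_C)$ coming from the boundary lies in the radical of the intersection form, since the latter is the composition $H_2(W_C) \to H_2(W_C, Y_C) \cong H^2(W_C)$ which kills the image of $H_2(Y_C)$ by exactness of the pair. Therefore the intersection form vanishes rationally, yielding $b_2^+(W_C) = b_2^-(W_C) = 0$.
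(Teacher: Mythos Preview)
Your proof is correct and follows the same overall Mayer--Vietoris strategy as the paper. The main difference worth noting is how you establish $b_2^\pm(W_C)=0$: the paper argues geometrically that every surface in $W_C$ is disjoint from $C$, hence null-homologous in $\CP^2$, so the intersection form on $W_C$ (which is computed in $\CP^2$) vanishes. You instead show that $H_2(Y_C)\to H_2(W_C)$ is a rational isomorphism and invoke the standard fact that classes pushed in from the boundary lie in the radical of the intersection form. Both arguments are short and standard; yours is purely homological and avoids any appeal to representatives, while the paper's geometric version has the bonus of identifying explicit rim-torus generators. The remaining differences---computing $H_1(Y_C)$ via the long exact sequence of $(N_C,Y_C)$ and Lefschetz duality rather than directly from the surgery description, and reading off $H_1(W_C)$ from the connecting map $[H]\mapsto d[\mu]$ rather than from the cokernel of $H_2(N_C)\oplus H_2(W_C)\to H_2(\CP^2)$---are dual descriptions of the same exact-sequence bookkeeping.
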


In what follows, we will write $H_*(-)$ instead of $H_*(-;\Z)$.

\begin{proof}
The 3-manifold $Y_C$ is obtained as $d^2$-surgery along a nullhomologous knot in $\#^{2g} S^1\times S^2$, therefore its first homology is $\Z/d^2\Z\oplus \Z^{2g}$.

Let us now apply the Mayer--Vietoris long exact sequence to the decomposition of $\CP^2$ into $N_C$ and $-W_C$.

Since $N_C$ retracts onto $C$, $H_*(N_C) = H_*(C)$ and the map $H_2(N_C)\to H_2(\CP^2)$ induced by the inclusion is multiplication by $d$, where we identify $H_2(N_C)$ with $\Z$ by mapping $[C]$ to 1, and we identify $H_2(\CP^2)$ with $\Z$ by mapping the hyperplane class $H$ to 1.

Also, notice that every smooth representative of a class in $H_2(W_C)$ is disjoint from $C$, therefore it intersects $H$ trivially: it follows that the inclusion $W_C\hookrightarrow \CP^2$ induces the trivial map $H_2(W_C)\to H_2(\CP^2)$ and that $b_2^\pm(W_C) = 0$.

From the Mayer--Vietoris long exact sequence we get the following exact sequences:
\begin{align*}
\begin{split}
&0\to H_4(\CP^2) \to H_3(Y_C) \to H_3(N_C)\oplus H_3(W_C)\to H_3(\CP^2) = 0\\
&0\to H_2(Y_C) \to H_2(N_C)\oplus H_2(W_C) \to H_2(\CP^2) \to L \to 0\\
&0\to L \to H_1(Y_C) \to H_1(N_C)\oplus H_1(W_C)\to H_1(\CP^2) = 0
\end{split}
\end{align*}
where $L$ is the cokernel of the map $H_2(N_C)\oplus H_2(W_C) \to H_2(\CP^2)$. From the first line we deduce that $H_3(W_C) = 0$.

The remarks on the maps $H_2(N_C)\to H_2(\CP^2)$ and $H_2(W_C)\to H_2(\CP^2)$ show that the inclusion $Y_C\hookrightarrow W_C$ induces an isomorphism $H_2(Y_C)\to H_2(W_C)$ and that $L = \Z/d\Z$. In particular, $b_2(W_C) = 2g$. More precisely, we can see that $H_2(W_C)$ is generated by embedded tori (of square 0, called \e{rim tori}), each of which projects onto an essential curve in $C$ under the projection $N_C\to C$.

Finally, the map $H_1(Y_C)\to H_1(N_C)$ restricts to an isomorphism on the free part of $H_1(Y_C)$; let $T\subset H_1(Y_C)$ be the torsion subgroup; we have another exact sequence:
\[
0\to L \to T \to H_1(W_C) \to 0
\]
from which the last claim follows.
\end{proof}

The following lemma deals with the extension of spin$^c$ structures from $Y_C$ to $W_C$. The proof of~\cite[Lemma 3.1]{BL} applies \e{verbatim} here, and we refer the reader to it.

\begin{lemma}
The spin$^c$ structure $\s_m$ on $Y_C$ extends to $W_C$ if and only if $m = kd$, where $k\in [-d/2,d/2]$ is an integer if $d$ is odd, and a half-integer if $d$ is even.
\end{lemma}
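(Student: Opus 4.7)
The plan is to compute which spin$^c$ structures on $Y_C$ lie in the image of the restriction map $r^{\ast}\colon\Spinc(W_C)\to \Spinc(Y_C)$. Since $\Spinc$ is a torsor over $H^2$ and $r^{\ast}$ is affine over the cohomological restriction $H^2(W_C)\to H^2(Y_C)$, the set of extending spin$^c$ structures is a coset, so it suffices to (i) compute the image of $H^2(W_C)\to H^2(Y_C)$, and (ii) exhibit a single extending spin$^c$ structure and match the resulting labels.

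For (i), I would use the cohomology long exact sequence of the pair $(W_C,Y_C)$ together with Lefschetz duality and Lemma~\ref{l:homology}. Since $H^3(W_C,Y_C)\cong H_1(W_C)=\Z/d\Z$, the cokernel of $H^2(W_C)\to H^2(Y_C)$ injects into $\Z/d\Z$; combined with the fact that the torsion subgroup of $H^2(Y_C)$ is $\Z/d^2\Z$, this forces the image in the torsion summand to be precisely the unique subgroup of order $d$, namely $d\cdot\Z/d^2\Z$. The identification is most transparent on the homology side: the Mayer--Vietoris sequence of $\CP^2=N_C\cup_{Y_C}W_C$ already computed in the proof of Lemma~\ref{l:homology} shows that $H_1(Y_C)\to H_1(W_C)$ is the natural quotient $\Z/d^2\Z\oplus\Z^{2g}\twoheadrightarrow\Z/d\Z$ (sending the generator of the torsion summand to a generator), and dualising via universal coefficients gives the stated image.

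For (ii), any spin$^c$ structure on $\CP^2$ restricts through $W_C$ to a spin$^c$ structure on $Y_C$ that manifestly extends, so the set of extending $\s_m$ is nonempty. To match labels, I would identify $\s_m$ via the surgery presentation of $Y_C$ as $d^2$-surgery on $K_C\# K_B$ inside $\#^{2g}S^1\times S^2$ furnished by the previous lemma: in the standard Ozsv\'ath--Szab\'o convention, $\s_m$ is characterised by extending to the surgery trace with Chern-class pairing on the capped-off Seifert surface equal to $2m-d^2$. With this convention the condition $m\equiv 0\pmod d$ picks out the extending structures, and representing the class $m\in \Z/d^2\Z$ symmetrically around zero forces $k=m/d\in[-d/2,d/2]$ to be an integer if $d$ is odd and a half-integer if $d$ is even, so that $kd$ is always an integer spanning a full set of coset representatives.

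The structural input---the homological computation in step~(i)---is essentially identical to the rational case, with the extra $\Z^{2g}$ summand playing no role in the torsion analysis; this is why the argument of~\cite[Lemma 3.1]{BL} carries over verbatim. The only real obstacle is conventional: tracking the normalisation of $\s_m$ in the higher-genus surgery description carefully enough to pin down the integer/half-integer dichotomy on $k$, which is handled exactly as in~\cite{BL}.
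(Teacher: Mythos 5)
Your strategy is consistent with the paper's, which offers no proof and simply defers to~\cite[Lemma 3.1]{BL}; your expansion of that argument is faithful, and the key observation --- that the genus only contributes a free $\Z^{2g}$ summand which cannot interfere with the torsion analysis --- is exactly why the rational-case proof carries over. Step~(i) is sound once you pass to the homology side as you do: the surjection $\Z/d^2\Z \twoheadrightarrow \Z/d\Z$ (the torsion part of $H_1(Y_C)\to H_1(W_C)$, dualised) identifies $\im\bigl(H^2(W_C)\to H^2(Y_C)\bigr)$ intersected with the torsion subgroup as $d\cdot\Z/d^2\Z$, so the extending torsion spin$^c$ structures form a coset of that subgroup, and it remains only to locate it.

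Step~(ii), however, contains an inconsistency. You assert that ``the condition $m\equiv 0\pmod d$ picks out the extending structures'' and then conclude that $k=m/d$ is a half-integer when $d$ is even --- but $m\equiv 0\pmod d$ forces $k$ to be an integer for every $d$, contradicting the lemma for $d$ even, where the coset is in fact $m\equiv d/2\pmod d$. The integer/half-integer dichotomy does not come from symmetrically choosing representatives; it comes from the constraint that any $\s\in\Spinc(\CP^2)$ is characteristic, $c_1(\s)\equiv H\pmod2$, so that restricting such an $\s$ through $W_C$ and translating into the surgery labelling $\langle c_1,A\rangle=d^2+2i$ introduces a $d\pmod2$-dependent offset. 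You rightly flag this as a convention matter handled in~\cite{BL}, but as written your sketch of the final step contradicts the statement you are trying to prove: you should first produce one explicit extending $\t_{m_0}$ by restricting a spin$^c$ structure from $\CP^2$, compute $m_0\pmod d$ from the characteristic condition, and only then conclude $m\equiv m_0\pmod d$.
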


\section{Heegaard Floer homology}\label{s:HF}

Let us consider a closed, oriented spin$^c$ 3-manifold $(Y,\t)$ such that $c_1(\t)$ is torsion in $H^2(Y;\Z)$: we call this a \e{torsion} spin$^c$ 3-manifold. Let $H$ denote $H_1(Y;\Z)$ modulo its torsion subgroup: Ozsv\'ath and Szab\'o associate to such a pair two $\Q$-graded $\bigwedge^*H\otimes\F[U]$-modules, $\HFoo({Y},\t)$ and $HF^+({Y},\t)$, called the \e{Heegaard Floer homology groups} of $({Y},\t)$~\cite{OS3I, OSPA, OSAG}. Here $\F$ is the field with two elements $\F = \Z/2\Z$.

The pair $({Y},\t)$ is said to have \e{standard} $\HFoo$ if
\[
\HFoo({Y},\t) = \textstyle{\bigwedge}^* H^1(Y;\Z) \otimes \F[U,U^{-1}]
\]
where the action by $H$ on the right-hand side is given by contraction.

There is a canonical map $\pi\colon\HFoo({Y},\t)\to HF^+({Y},\t)$, and we say that an element of $HF^+(Y,\t)$ is \e{nontorsion} if it is in the image of $\pi$. The map $\pi$ allows us to associate two numbers to a torsion spin$^c$ 3-manifold with standard $\HFoo$.

\begin{defn}
The \e{correction term} $d({Y},\t)$ of a torsion spin$^c$ 3-manifold $({Y},\t)$ is the minimal degree of an element in $\im(\pi)$.

The \e{bottom-most correction term} $d_b({Y},\t)$ of $({Y},\t)$ is the minimal degree of an element in $\pi(K_H)$, where $K_H$ is the kernel of the action by $H$ on $\HFoo(Y,\t)$.
\end{defn}

The following theorem is due to Ozsv\'ath and Szab\'o.

\begin{thm}[{\cite[Theorem 9.15]{OSAG}}]\label{d-inequality}
If $(W,\s)$ is a spin$^c$ 4-manifold with boundary $(Y,\t)$, then under the following hypotheses:
\begin{itemize}
 \item $W$ is negative semidefinite;
 \item the restriction map $H^1(W;\Z)\to H^1(Y;\Z)$ is trivial;
 \item $c_1(\t)$ is torsion;
 \item $(Y,\t)$ has standard $\HFoo$;
\end{itemize}
the following inequality holds:
\begin{equation}\label{eqn:d-ineq}
 c_1^2(\s) + b_2^-(W) \le 4d_b(Y,\t) + 2b_1(Y).
\end{equation}
\end{thm}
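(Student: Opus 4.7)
My plan is to apply the Ozsv\'ath-Szab\'o cobordism-induced maps on Heegaard Floer homology. I first remove a small open $4$-ball from $W$ to obtain a cobordism $\bar W$ from $S^3$ to $Y$ with $\s$ restricting naturally (extending the unique \spinc structure $\s_0$ on $S^3$ and $\t$ on $Y$). The induced cobordism map on the $\HFoo$ level shifts the absolute $\Q$-grading by
\[
\Delta(\s) \;:=\; \frac{c_1^2(\s) - 2\chi(\bar W) - 3\sigma(\bar W)}{4}.
\]

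The main claim is that the image of the degree-zero generator $1 \in \HFoo(S^3) \cong \F[U,U^{-1}]$ under $F^\infty_{\bar W,\s}$ projects to a nonzero element of $\pi(K_H) \subset HF^+(Y,\t)$, of grading expressible in terms of $\Delta(\s)$. This has two ingredients. \textbf{Nontriviality}: $F^\infty_{\bar W,\s}(1) \ne 0$ and its image in $HF^+$ is nonzero; this uses both the negative semi-definite hypothesis on $W$ and the standard-$\HFoo$ hypothesis on $(Y,\t)$, and is established by decomposing $\bar W$ into $1$- and $2$-handles (with all $2$-handles negative by the semi-definite hypothesis) and tracking the cobordism maps through the surgery exact triangle. \textbf{Location in $K_H$}: the image is annihilated by the $H = H_1(Y;\Z)/\text{tors}$ action. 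This uses the $\bigwedge^*(H_1(-)/\text{tors})$-equivariance of cobordism maps: since $\HFoo(S^3)$ has trivial $H_1$-action, for any $\gamma \in H$ that dies in $H_1(\bar W)/\text{tors}$, $A_\gamma$ annihilates the image. The hypothesis that $H^1(W;\Z) \to H^1(Y;\Z)$ is trivial is, over $\Q$, equivalent (by the universal coefficient theorem) to $H_1(Y) \to H_1(W)$ being trivial mod torsion, so every $\gamma \in H$ dies and the image lies in $K_H$.

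Combining these two ingredients yields a comparison between $\Delta(\s)$ and $d_b(Y,\t)$. The final step is a bookkeeping calculation: one writes $\chi(\bar W) = \chi(W)-1$ and $\sigma(\bar W) = -b_2^-(W)$, uses the identity $b_3(W) = b_1(W)$ (which comes from Poincar\'e-Lefschetz duality applied to the long exact sequence of the pair $(W,Y)$ under the hypothesis on $H^1$), and rearranges to obtain the stated inequality $c_1^2(\s) + b_2^-(W) \le 4 d_b(Y,\t) + 2 b_1(Y)$. I expect the principal obstacle to be the nontriviality step, which requires careful tracking of how the cobordism maps interact with the $\bigwedge^* H^1(Y) \otimes \F[U,U^{-1}]$ model of $\HFoo(Y,\t)$ through a handle decomposition.
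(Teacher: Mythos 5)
The paper does not prove this theorem; it is quoted verbatim from Ozsv\'ath--Szab\'o, \cite[Theorem~9.15]{OSAG}. So there is no in-paper proof to compare against; you are reproving an imported result.

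That said, the sketch has a real gap at its centre. You claim that $\pi\bigl(F^\infty_{\bar W,\s}(1)\bigr)$ is a nonzero element of $\pi(K_H)\subset HF^+(Y,\t)$ of degree $\Delta(\s)$. If that were true, then since $d_b$ is by definition the \emph{minimum} degree of a nonzero element of $\pi(K_H)$, you would conclude $\Delta(\s)\ge d_b(Y,\t)$ --- which, after the bookkeeping you describe (which is otherwise fine: $\chi(\bar W)=\chi(W)-1$, $\sigma(\bar W)=-b_2^-(W)$, $b_3(W)=b_1(W)$, $\dim\ker(Q_W)\le b_1(Y)$), produces the \emph{reverse} of the stated inequality. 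Moreover, the claim itself is simply false in general: take $W$ the $(-1)$-disc bundle over $S^2$, $\bar W\colon S^3\to S^3$, and $\s$ with $c_1^2=-9$; then $\Delta(\s)=-2$, $F^\infty(1)$ sits in degree $-2$, and it lies in $\ker\pi=\mathrm{im}(HF^-\to\HFoo)$, so its projection to $HF^+(S^3)$ vanishes. The argument one actually needs goes the other way around: use that $F^\infty_{\bar W,\s}$, being a nonzero $\F[U,U^{-1}]$-equivariant map landing in $K_H\cong\F[U,U^{-1}]$, restricts to an \emph{isomorphism} onto $K_H$, and that by naturality $\pi_Y\bigl(F^\infty(U^n)\bigr)=F^+\bigl(\pi_{S^3}(U^n)\bigr)=0$ for every $n>0$; hence $K_H$ in degrees $\le\Delta(\s)-2$ lies in $\ker\pi_Y$, which forces $d_b(Y,\t)\ge\Delta(\s)$. (Equivalently, one can run the argument on $HF^-$, bounding the top degree of the nontorsion image of the generator of $HF^-(S^3)$ inside $K_H\cap\mathrm{im}(HF^-\to HF^\infty)$ by $d_b-2$.) One further caveat: your nontriviality step asserts a handle decomposition of $\bar W$ with all $2$-handles negative, but negative semi-definiteness of $W$ does not by itself supply such a decomposition (blow-ups already obstruct this naively); the nonvanishing of $F^\infty$ for $b_2^+=0$ cobordisms is a separate theorem of Ozsv\'ath--Szab\'o and should be invoked rather than rederived by an optimistic handle argument.
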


We want to apply the inequality above to $W_C$ and its boundary $Y_C$ from the previous section.

\begin{lemma}\label{l:standardHFoo}
$Y_C$ and $-Y_C$ have standard $\HFoo$ in their torsion spin$^c$ structures.
\end{lemma}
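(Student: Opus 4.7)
My plan is to reduce the claim to showing that the triple cup product form
\[
\mu\colon\Lambda^3(H^1(Y_C;\Z)/\mathrm{tors})\to\Z
\]
vanishes identically on $Y_C$. Indeed, for a torsion \spinc 3-manifold $(Y,\t)$, the module $\HFoo(Y,\t)$ is known to have the standard form whenever $\mu\equiv 0$; this is a criterion of Ozsv\'ath--Szab\'o, further refined in subsequent work. Once $\mu\equiv 0$ is established for $Y_C$, the same holds for $-Y_C$ since orientation reversal changes $\mu$ only by a sign, so both halves of the lemma follow from a single computation.

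To compute $\mu$, the first step is to identify $H^1(Y_C;\Z)$ with $H^1(N_C;\Z)$ via the inclusion $\iota\colon Y_C\hookrightarrow N_C$. The Mayer--Vietoris sequence for $\CP^2=N_C\cup(-W_C)$, combined with $H^1(\CP^2;\Z)=0$ and $H^1(W_C;\Z)=0$ (the latter since $H_1(W_C;\Z)=\Z/d\Z$ is torsion by Lemma~\ref{l:homology}), produces an injection $\iota^*\colon H^1(N_C;\Z)\hookrightarrow H^1(Y_C;\Z)$. Both groups are free abelian of rank $2g$; surjectivity is then immediate because any homomorphism $H_1(Y_C;\Z)\to\Z$ must annihilate the torsion summand $\Z/d^2\Z$ and therefore factors through the projection $H_1(Y_C;\Z)\twoheadrightarrow H_1(N_C;\Z)$ identified by the Mayer--Vietoris analysis.

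Next, I would use that $N_C$ deformation retracts onto $C$, which is homeomorphic to a closed orientable surface of genus $g$, so that $H^3(N_C;\Z)=H^3(C;\Z)=0$. For any $\alpha,\beta,\gamma\in H^1(Y_C;\Z)$, writing $\alpha=\iota^*\widetilde\alpha$ and similarly for $\beta$ and $\gamma$, one gets
\[
\alpha\cup\beta\cup\gamma=\iota^*(\widetilde\alpha\cup\widetilde\beta\cup\widetilde\gamma)\in\iota^*\bigl(H^3(N_C;\Z)\bigr)=0,
\]
so $\mu\equiv 0$ on the free quotient of $H^1(Y_C;\Z)$, which completes the argument modulo the cup-product criterion.

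The main obstacle is the invocation of the cup-product characterisation of standard $\HFoo$ in the exact generality required here. Should that not be available off-the-shelf, my backup is to exploit the explicit presentation of $Y_C$ as $d^2$-surgery on the nullhomologous knot $K_C\# K_B\subset\#^{2g}S^1\times S^2$: $\HFoo$ of $\#^{2g}S^1\times S^2$ is standard by a direct computation of Ozsv\'ath--Szab\'o, and the integer surgery exact triangle (or mapping cone formula) for $\HFoo$ should propagate the standard form through the 2-handle attachment that produces $Y_C$, handling $-Y_C$ by the symmetric construction.
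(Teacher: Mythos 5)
Your primary approach is correct and takes a genuinely different route from the paper. The paper observes that $-Y_C$ is negative surgery on the nullhomologous knot $K_C\#K_B\subset\#^{2g}S^1\times S^2$, invokes Proposition~9.4 of~\cite{OSAG} (surgery on a nullhomologous knot preserves standard $\HFoo$) to conclude for $-Y_C$, and then uses Proposition~2.5 of~\cite{OSPA} to get the conclusion for $Y_C$ by orientation reversal --- which is exactly your ``backup'' plan. Your main argument instead reduces to the vanishing of the triple cup product form $\mu_{Y_C}$ and appeals to Lidman's characterisation of standard $\HFoo$ with $\F$-coefficients. The cohomological computation is sound: by Lemma~\ref{l:homology} the inclusion $Y_C\hookrightarrow N_C$ induces an isomorphism $H_1(Y_C;\Z)/\mathrm{tors}\to H_1(N_C;\Z)$, hence $\iota^*\colon H^1(N_C;\Z)\to H^1(Y_C;\Z)$ is an isomorphism, and since $N_C$ retracts onto the surface $C$ one has $H^3(N_C;\Z)=0$, killing all triple products; orientation reversal only changes the sign of $\mu$.

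What each approach buys: yours is conceptually cleaner and independent of the surgery presentation --- it would apply verbatim to any $3$-manifold arising as the boundary of a regular neighbourhood of an embedded surface, without having to exhibit a knot. The paper's route has the minor advantage that the surgery description of $Y_C$ is needed anyway for the correction-term computations in Proposition~\ref{p:dbottom}, so it incurs no extra setup and stays within the original Ozsv\'ath--Szab\'o framework without invoking Lidman's later theorem. One small caveat on your writeup: the remark that ``both groups are free abelian of rank $2g$'' does not by itself give surjectivity of the injective map $\iota^*$ (an injection $\Z^n\hookrightarrow\Z^n$ need not be onto); what actually gives surjectivity is your subsequent observation that $\iota_*$ kills the torsion and is an isomorphism on the free quotient, which is precisely what Lemma~\ref{l:homology} supplies.
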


\begin{proof}
The 3-manifold $-Y_C$ is obtained from $\#^{2g}S^1\times S^2$, which has standard $\HFoo$ in its torsion spin$^c$ structure, by a negative surgery along the nullhomologous knot $K_C\#K_B$.

Hence~\cite[Proposition 9.4]{OSAG} shows that $-Y_C$ has standard $\HFoo$, and~\cite[Proposition 2.5]{OSPA} implies that $Y_C$ has standard $\HFoo$, too.
\end{proof}

Since $\HFoo(Y_C,\t)$ is standard for any torsion spin$^c$ structure $\t$, the bottom-most correction terms $d_b(Y_C,\t)$ are defined. We index the torsion spin$^c$ structures on $Y_C$ by elements in $\Z/d^2\Z$ in the following way (see also~\cite[Section 2]{OSIS}).

%Consider the surgery cobordism $V$ from $\#^{2g}S^1\times S^2$ to $Y$ and let $\s_i$ be the spin$^c$ structure on $V$ that restricts to the trivial spin$^c$ structure on $\#^{2g}S^1\times S^2$ and that evaluates as $d^2+2i$ on the generator of $H_2(V\#^{2g}S^1\times S^2)$ (this is obtained by capping off the Seifert surface for $K\#K_B$ in $\#^{2g}S^1\times S^2$ with the core of the 2-handle). We denote with $\t_i$ the spin$^c$ structure associated with $[i]\in\Z/d^2\Z$.
Let $Z$ denote the surgery cobordism from $\#^{2g}S^1\times S^2$ to $Y_C$, and denote with $A$ the class of $H_2(Z)$ represented by a Seifert surface for $K_C\#K_B$, capped off with the core of the 2-handle. For every integer $i$ there is a unique \spinc structure $\s_i$ on $Z$ such that:
\begin{itemize}
\item the restriction of $\s_i$ to $\de_-Z = \#^{2g}S^1\times S^2$ is the unique torsion \spinc structure on $\#^{2g}S^1\times S^2$;
\item $\langle c_1(\s_i), A\rangle = d^2+2i$.
\end{itemize}
We denote with $\t_i$ the restriction of $\s_i$ to $\de_+Z = Y_C$, and note that $\t_i = \t_{i+d^2}$, therefore the $\Z$-labelling descends to a labelling of $\Spinc(Y_C)$ by $\Z/d^2\Z$ (see~\cite[Section 2.4]{OSIS}).

We introduce some more notation. Given a knot $K$ in $S^3$, we associate to it a family of integers $\{V_m(K)\}_{m\ge0}$ defined as follows~\cite{NW, Jake}. For every fixed $m\ge0$, the quantity
\[
d(S^3_N(K), \t_m) + \frac{N-(2m- N)^2}{4N}
\]
is independent of $N$ when $N>2g(K)$, and we define $V_m(K)$ as
\[
V_m(K) = -\frac12\left(d(S^3_N(K), \t_m) + \frac{N-(2m- N)^2}{4N}\right)
\]
for any such $N$. Here the labelling of spin$^c$ structures on $S^3_N(K)$ is the one induced by the surgery cobordisms along $K$, as above. While we do not need it in what follows, we remark here that the summand $-\frac{N-(2m- N)^2}{4N}$ is the correction term of the lens space $L(N,-1)$ in the \spinc structure $\t_m$, with the labelling induced by the surgery cobordism associated to $+N$-surgery along the unknot in $S^3$, as above.

\begin{prop}\label{p:dbottom}
Using the notation and the constants from the previous section, we can compute the bottom-most correction terms of $\pm Y_C$:
\begin{align*}
d_b(Y_C,\t_m) &= \min_{0\le k \le g}\{2k-g-2V_{m+g-2k}(K_C)\} - d_-(m),\\
d_b(-Y_C,\t_m) &= \min_{0\le k \le g}\{2m-2k+g+2V_{m+g-2k}(K_C)\} + d_+(m),
\end{align*}
where $\displaystyle d_\pm(m) = \frac{d^2-(2m\pm d^2)^2}{4d^2}$.
\end{prop}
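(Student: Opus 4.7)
\vspace{0.5em}
\noindent\textbf{Proof plan.} The strategy is to realise $Y_C$ as a large surgery on the nullhomologous knot $K := K_C \# K_B$ inside $\#^{2g}S^1\times S^2$, combine the large-surgery formula of Ozsv\'ath-Szab\'o with the K\"unneth formula for knot Floer homology under connected sum, and exploit the explicit structure of the knot Floer complex of the Borromean knot. When $g = 0$ the argument collapses to the one used by Borodzik-Livingston~\cite{BL}.

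By the construction of Section~\ref{s:top}, $Y_C$ is the result of $d^2$-surgery on $K$, whose Seifert genus equals $\delta + g$. The degree-genus formula yields $2(\delta+g) = (d-1)(d-2) < d^2$, so the surgery is ``large'' and the large-surgery formula identifies $HF^+(Y_C,\t_m)$ with the homology of a subcomplex $A^+_m(K) \subset \CFKoo(K)$ up to a grading shift equal to the correction term of the model $d^2$-surgery on the unknot in $S^3$; by the very definition of $V_m$, this shift contributes $-d_-(m)$ in the formula for $d_b(Y_C,\t_m)$, and $+d_+(m)$ in that for $d_b(-Y_C,\t_m)$.

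Next, I would apply the connected-sum K\"unneth formula $\CFKoo(K_C \# K_B) \cong \CFKoo(K_C) \otimes \CFKoo(K_B)$, and recall the standard description of $\CFKoo(K_B)$: its generators are parametrised by the exterior algebra $\bigwedge^{\ast}H^1(\Sigma_g)$, the Alexander filtration runs from $-g$ to $g$, and the induced action of $H = H_1(\#^{2g}S^1\times S^2)/\mathrm{tors}$ is the contraction action on the exterior algebra. The kernel $K_H$ of the $H$-action on $\HFoo$ is spanned by the top exterior power; tracking this generator across Alexander levels yields, at the $k$-th level, a contribution of $2k-g$ to the homological grading together with a shift by $g-2k$ of the Alexander argument on the $K_C$ side. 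The $K_C$ factor then contributes grading $-2V_{m+g-2k}(K_C)$.

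Taking the minimum over $k$ and subtracting $d_-(m)$ yields the first formula, where a conjugation symmetry of the Borromean complex restricts the effective range of $k$ to $\{0,\ldots,g\}$. The formula for $-Y_C$ follows from the analogous analysis applied to negative $d^2$-surgery, the extra $2m$ term encoding the duality $\t_m \leftrightarrow \t_{-m}$ on spin$^c$ structures under orientation reversal. The main technical obstacle is verifying that, under the large-surgery and connected-sum identifications, the $H$-action on $HF^+(Y_C,\t_m)$ corresponds precisely to the contraction action on the exterior-algebra factor $\CFKoo(K_B)$; this requires carefully tracing the action through the surgery cobordism $Z$ and the connected-sum decomposition of $K$.
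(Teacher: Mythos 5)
Your high-level strategy is the same as the paper's: realize $Y_C$ as a large surgery on $K = K_C \# K_B$, invoke the large-surgery/mapping-cone identification of $HF^+$ with a sub-/quotient complex of $\CFKoo(K)$, split by K\"unneth, and track the kernel of the $H_1$-action through the Borromean factor. However, there is a concrete error in the crucial step. The kernel $K_H$ of the $H_1$-action on $C(K_B)$ is \emph{not} spanned by the top exterior power. Recall that the action is
\[
\gamma \cdot (U^n\otimes \omega) = U^n\otimes \iota_\gamma\omega + U^{n+1}\otimes (\PD(\gamma)\wedge \omega),
\]
which has both a contraction term and a wedge term; the top power $a_1\wedge b_1\wedge\cdots\wedge a_g\wedge b_g$ is annihilated by the second term but not by the first, so it is not in the kernel. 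The actual kernel is generated over $\Z[U,U^{-1}]$ by $x = \prod_{i=1}^g(1+U\otimes a_i\wedge b_i)$, whose homogeneous pieces $x_k = \sum_{\#I=k} x_I$ sit in filtration $(0,-g+2k)$. It is precisely this decomposition that produces the $g+1$ summands indexed by $k\in\{0,\ldots,g\}$, each isomorphic to a shifted copy of $C(K_C)$; the bound on $k$ is structural (a product of $g$ binomials has $g+1$ graded pieces), not a consequence of any ``conjugation symmetry'' as you suggest. Contraction alone, which is the $H$-action on the \emph{closed} 3-manifold invariant $\HFoo$, would indeed single out the top power, but the surgery formula forces you to work at the level of the knot complex, where the action is the twisted one above.

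A second, smaller gap: for the $-Y_C$ formula you attribute the $2m$ to ``duality $\t_m\leftrightarrow\t_{-m}$,'' but this is vague. The clean route (which the paper takes) is to define $2V'_m(K)$ as the bottom-most nontorsion degree in $H_*(C^s_m(K))$ for the \emph{small} complex at negative surgeries, and then prove the identity $V'_m(m(K)) = V_m(K) + m$ by comparing the two surgery formulae and using $d(-Y,\t) = -d(Y,\t)$. Without isolating this lemma, the $2m$ shift is not justified. With these two repairs your argument would match the paper's.
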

Let us recall some definitions and results from~\cite[Section 4]{OSKI}.

Consider a nullhomologous knot $K$ in a closed 3-manifold $Y$, and let $C(K)=\CFKoo(K)$ denote its knot Floer homology complex.

For every integer $m$, we have two quotients of $C(K)$:
\begin{itemize}
\item The subset $C(K)\{\max(i,j-m)<0\}$ is a subcomplex of $C(K)$, and we denote the corresponding quotient by $C^b_m(K)$, and the quotient map $C(K) \to C^b_m(K)$ by $\pi^b_m$; $C^b_m(K)$ is called the \e{big} complex;
\item the subset $C(K)\{\min(i,j-m)<0\}$ is a subcomplex of $C(K)$, and we denote the corresponding quotient by $C^s_m(K)$, and the quotient map $C(K) \to C^s_m(K)$ by $\pi^s_m$; $C^s_m(K)$ is called the \e{small} complex.
\end{itemize}

Graphically, the big complex corresponds to the three ``non-negative'' quadrants with vertex in $(0,m)$, and the small complex corresponds to the ``positive'' 
quadrant with vertex in $(0,m)$.

It is proven in~\cite{JakePhD} and~\cite[Section 4]{OSKI} that, for every $N>2g(K)$, the complexes $C^b_m(K)$ and $C^s_m(K)$ compute the Floer homology of $(Y_N(K), \t_m)$ and $(Y_{-N}(K), \t_m)$ respectively, with a degree shift (see in particular \cite[Corollary 4.2 and Remark 4.3]{OSKI}). Namely,
\begin{align}
HF^+_{*-\frac{N-(2m - N)^2}{4N}}(Y_N(K),\t_m) \simeq H_*(C^b_m(K))\label{e:psurg}\\
HF^+_{*+\frac{N-(2m + N)^2}{4N}}(Y_{-N}(K),\t_m) \simeq H_*(C^s_m(K))\label{e:nsurg}
\end{align}
Moreover, this isomorphism respects the action of 
$H_1(Y; \Z)/{\rm Torsion} = H_1(Y_N(K);\Z)/{\rm Torsion}$: this is implicit in the statement of \cite[Theorem 4.4]{OSKI}, and it follows from the equivariance of cobordism maps associated to nonzero surgeries along nullhomologous knots under the $H_1$-action.

We pause here to state and prove the following lemma. In what follows, given a knot $K$ we denote with $m(K)$ its mirror. We also denote with $2V'_m(K)$ the minimal degree of an element in $H_*(C^s_m(K))$ that is nontorsion.

\begin{lemma}\label{lemma:Hm}
For every $m\ge 0$,  $V'_m(m(K)) = V_m(K)+m$.
\end{lemma}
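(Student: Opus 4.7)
The plan is to derive the identity by comparing the correction terms of the rational homology spheres $S^3_N(m(K))$ and $-S^3_{-N}(K)$, which are diffeomorphic for every $N>2g(K)$.

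First, I would rewrite $V_m$ and $V'_m$ in terms of correction terms. The isomorphisms~\eqref{e:psurg} and~\eqref{e:nsurg}, combined with the definition of $V_m$ and with the convention that $2V'_m(K)$ is the minimal grading of a nontorsion element of $H_\ast(C^s_m(K))$, yield
\begin{align*}
d(S^3_N(K),\t_m) &= -2V_m(K) - \frac{N-(2m-N)^2}{4N},\\
d(S^3_{-N}(K),\t_m) &= 2V'_m(K) + \frac{N-(2m+N)^2}{4N}
\end{align*}
for every $N>2g(K)$.

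Second, I would invoke the classical diffeomorphism $S^3_N(m(K))\cong -S^3_{-N}(K)$, which follows from the fact that mirroring a knot and reversing ambient orientation together reverse the sign of a surgery framing. Under this identification the cobordism-based labelling of torsion spin$^c$ structures pairs $\t_m$ on the left with $\t_m$ on the right, since the pairing $\langle c_1(\s),A\rangle$ that determines the labelling is insensitive to the orientation of the 4-manifold. Combined with $d(-Y,\t)=-d(Y,\t)$, this gives
\[
-2V_m(m(K)) - \frac{N-(2m-N)^2}{4N}
\;=\; -2V'_m(K) - \frac{N-(2m+N)^2}{4N}.
\]
The constants differ by $\frac{(2m+N)^2-(2m-N)^2}{4N}=2m$, so $V'_m(K)=V_m(m(K))+m$; substituting $K\leftrightarrow m(K)$ and using $m(m(K))=K$ yields the claim.

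The only slightly delicate step is the spin$^c$ bookkeeping in the second paragraph, which I expect to be the main (mild) obstacle. However, the conclusion is robust: applying the conjugation symmetry $d(Y,\t)=d(Y,\bar\t)$ to both formulas from the first step yields $V_{-m}(K)=V_m(K)+m$ and $V'_{-m}(K)=V'_m(K)-m$, so even if the diffeomorphism identified $\t_m$ with its conjugate $\t_{-m}$ the resulting relation would simplify to the same identity.
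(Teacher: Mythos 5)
Your proposal is correct and is essentially the paper's own proof: both express $V_m$ and $V'_m$ through correction terms of $\pm N$-surgeries via the isomorphisms~\eqref{e:psurg} and~\eqref{e:nsurg}, then apply $d(-Y,\t)=-d(Y,\t)$ together with the diffeomorphism $S^3_N(K)\cong -S^3_{-N}(m(K))$ (you merely interchange the roles of $K$ and $m(K)$). You are more explicit than the paper about the spin$^c$ bookkeeping under this diffeomorphism, and your conjugation-symmetry fallback (using $V_{-m}(K)=V_m(K)+m$ and $V'_{-m}(K)=V'_m(K)-m$) correctly shows the conclusion is insensitive to a possible $\t_m\leftrightarrow\t_{-m}$ mismatch.
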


\begin{proof}
Given an integer $m$ and a sufficiently large $N$, Equation~\eqref{e:nsurg} asserts that the complex $C^s_m(m(K))$ computes the Heegaard Floer homology of $S^3_{-N}(m(K))$, with a degree shift by $\frac{N-(2m + N)^2}{4N}$.

More precisely, the minimal degree $2V'_m(m(K))$ of a nontorsion element in $H_*(C^s_m(m(K)))$ is $d(S^3_{-d^2}(m(K)),\t_m)-\frac{N-(2m + N)^2}{4N}$.
On the other hand, correction terms of rational homology spheres change sign when reversing the orientation~\cite[Proposition 4.2]{OSAG}, so we have
\[
-2V_m(K) - \textstyle{\frac{N-(2m - N)^2}{4N}} = d(S^3_{N}(K),\t_m) = -d(S^3_{-N}(m(K)),\t_m) = -2V'_m(m(K)) - \textstyle{\frac{N-(2m + N)^2}{4N}},
\]
from which the lemma follows.
\end{proof}

We are going to apply the surgery formulae to $(Y,K) = (S^3,K_C)\#(\#^{2g} S^1\times S^2, K_B)$ and its mirror; indeed, the 3-manifold $Y_C$ is obtained as $+d^2$-surgery along $K$, while $-Y_C$ is obtained as $-d^2$-surgery along $m(K)$.

Also, the knot Floer homology of $K_B$ as an $H_1(\#^{2g} S^1\times S^2;\Z)$-module has been computed in~\cite[Proposition 9.2]{OSKI}. 

Identify $H_1(\#^{2g} S^1\times S^2;\Z)$ with $H_1(\Sigma;\Z)$, where $\Sigma$ is a closed genus-$g$ surface. Endow the module
\[
C(K_B) = \Z[U,U^{-1}] \otimes_\Z {\textstyle{\bigwedge}}^* H^1(\Sigma; \Z)
\]
with the trivial differential; define the Alexander and Maslov gradings on $C(K_B)$ so that the summand $U^{-i} \otimes \Lambda^{g-i+j} H^1(\Sigma; \Z)$ is homogeneous of Alexander degree $j$ and Maslov degree $i+j$.
Ozsv\'ath and Szab\'o prove that $C(K_B)$ is quasi-isomorphic to $\CFKoo(K_B)$. 

The action of an element $\gamma\in H_1(\Sigma;\Z)$ on $C(K_B)$ is the following:
\[
\gamma \cdot (U^n\otimes \omega) = U^n\otimes \iota_\gamma\omega + U^{n+1}\otimes (\PD(\gamma)\wedge \omega).
\]
where $\iota_\gamma$ denotes contraction.

An easy check shows that the kernel of the action of $H_1(\Sigma;\Z)$ is generated over $\Z[U,U^{-1}]$ by the element $x=\prod(1+U\otimes a_i\wedge b_i)$, 
where $\{a_i,b_i\}$ is a symplectic basis of $H^1(\Sigma;\Z)$. Notice that each of the summands $x_I = \bigwedge_{i\in I\subset \{1,..,g\}} a_i\wedge b_i$ appears with nonzero coefficient in $x$, and that each chain $x_k = \sum_{\#I=k} x_I$ lives in the summand $C(K_B)\{0,-g+2k\}$.

We are now ready to prove Proposition~\ref{p:dbottom}.

\begin{proof}[Proof of Proposition~\ref{p:dbottom}]
Notice that $d^2$ is larger than $2\delta+2g = 2g(K_C)+2g$, since it follows from the degree-genus formula that $2g+2\delta = d^2-3d+2$ and the latter is smaller than $d^2$ for every $d\ge 1$. In particular, since 
$g(K) = g(K_C)+g$, $d^2>2g(K)$, equation \eqref{e:psurg} applies.

We are interested in the kernel of the $H_1$-action on the big complex $C^b_m(K)$  and the small complex $C^s_m (m(K))$.

By the K\"unneth formula for knot Floer homology~\cite[Theorem 7.1]{OSKI}, $C(K) = C(K_C)\otimes C(K_B)$.
Any element in $C(K_C)\otimes \langle x\rangle$ lies in the kernel of the action of $H_1(\Sigma;\Z)$, since $H_1(\Sigma;\Z)$ only acts on $C(K_B)$.
Moreover, since the differential on $C(K_B)$ is trivial, a chain $y\otimes z\in C(K)$ is a cycle (respectively, a boundary) if and only if $y$ is a cycle (resp., a boundary).

We start with proving the first equality.

Notice that $C(K_C)\otimes \langle x_k\rangle$ is a subcomplex of $C(K)$ which is isomorphic, as a graded module, to $C(K_C)$, with the degree shifted by $2k-g$; it follows that $\pi^b_m(C(K_C)\otimes \langle x_k\rangle)$ is isomorphic, as a graded complex, to $C^b_{m+g-2k}(K_C)$.

Hence, the minimal degree of elements in the image of $(\pi^b_m)_*:H_*(C(K_C)\otimes \langle x_k \rangle)\to H_*(\pi^b_m(C(K_C)\otimes \langle x_k\rangle))$ is $2k-g-2V_{m+g-2k}(K_C)$.

Recall now that $d_b(Y_C,\t_m)$ is defined as the minimal degree in the image of the kernel of the $H_1$-action under $(\pi^b_m)_*$, together with the degree shift given by~\eqref{e:psurg}.
Here the kernel is $H_*(C(K_C)\otimes \langle x\rangle)$, and the minimal degree in its image under $(\pi^b_m)_*$ is the minimal degree in the image of $H_*(C(K_C)\otimes \langle x_k \rangle)$ as $k$ varies from $0$ to $g$.

In particular, since the framing is $N=d^2$, it follows from \eqref{e:psurg} that 
\[
d_b(Y_C,\t_m) = \min_{0\le k \le g}\{2k-g-2V_{m+g-2k}(K_C)\} - \frac{d^2-(2m-d^2)^2}{4d^2},
\]
as desired.

We now turn to the second equality. Recall that we denoted with $2V'_m(m(K_C))$ the minimal degree of a nontorsion element in $H_*(C^s_m(m(K_C)))$, and that we computed in Lemma~\ref{lemma:Hm} that $V'_m(m(K_C)) = V_m(K_C)+m$. Moreover, since $d^2>2g(K)$, equation \eqref{e:nsurg} applies.

Since $K_B$ is amphicheiral, $m(K)$ is the connected sum of $m(K_C)$ and $K_B$, and the K\"unneth principle for knot Floer homology gives us a quasi-isomorphism $C(m(K)) \simeq C(m(K_C))\otimes C(K_B)$.
Similarly as for the big complex above, observe that $\pi^s_m(C(m(K_C))\otimes \langle x_k\rangle)$ is a subcomplex isomorphic to $C^s_{m+g-2k}(m(K_C))$, up to a degree shift by $g-2k$.
Thus, the minimal degree of a nontorsion element in $H_*(\pi^s_m(C(m(K_C))\otimes \langle x_k\rangle))$ is $2V'_{m+g-2k}(m(K_C))+g-2k$.
As above, the minimal degree in the image of the kernel of the $H_1$-action is now obtained by minimising this quantity when $k$ runs between 0 and $g$, and the second equality follows from~\eqref{e:nsurg}.
\end{proof}

\section{Proof of Theorem~\ref{thm:main}}\label{s:mainproof}

Recall from Equation~\eqref{e:RI} and from \cite[Propositions 4.4 and 4.6]{BL} the connection between the following functions associated to the link of a singularity: the gap function $I$, the semigroup-counting function $R$, and the function $V$ that computes the correction terms for positive surgeries. Here we suppress the singularity from the notation and we write $I(m), R(m), V(m)$ instead of $I_m, R_m, V_m$.

For every integer $m\ge \delta$, we have
\begin{equation}\label{e:VIR}
V({m-\delta}) = I(m) = R(m) + \delta - m
\end{equation}
where $2\delta$ is the Milnor number of the singularity.

Borodzik and Livingston explained in~\cite{BL} what happens when there are more singular points, i.e. when the knot $K_C$ is a \e{connected sum} of links of singularities.

\begin{defn}[{\cite[Equation (5.3)]{BL}}]
Given two functions $I, I': \Z\to\Z$ bounded from below, we denote with $I\diamond I'$ the \e{infimum convolution} of $I$ and $I'$:
\[
(I\diamond I')(s) = \min_{m\in\Z} I(m) + I'(s-m).
\]
\end{defn}

The property of the infimum convolution that is most relevant to us is the following refinement of the relation~\eqref{e:VIR} above.

\begin{lemma}[{\cite[Proposition 5.6]{BL}}]\label{l:morecusps}
Let $I^1,\dots, I^n$ be the gap functions associated to the links $K_1,\dots, K_n$ of $n$ cuspidal singularities. Let $K = \#_i K_i$ and $\delta = \sum \delta_i$, where $2\delta_i$ is the Milnor number of $K_i$. Finally, let $V$ be the function that computes the correction terms associated to $K$. Then
\[V(m) = (I^1\diamond \dots \diamond I^n)(m+\delta).\]
\end{lemma}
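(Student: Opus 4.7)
The strategy is to reduce the multi-cusp assertion to the single-cusp identity \eqref{e:VIR} via the K\"unneth formula for knot Floer homology, using the fact that each $K_i$ is an $L$-space knot whose $\CFKoo(K_i)$ is a staircase complex entirely determined by its semigroup $\Gamma_i$.

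Concretely, I would invoke the K\"unneth isomorphism $\CFKoo(K) \simeq \bigotimes_i \CFKoo(K_i)$ from \cite[Theorem 7.1]{OSKI}. On pure tensors both the Maslov grading and the $(i,j)$-bifiltration are additive, so the subquotient $C^b_m(K)$ computing $V_m(K)$ admits, as chain complex, a decomposition into tensor-product pieces indexed by all integer decompositions $m + \delta = \sum_i n_i$ of the shifted Alexander level. For each $L$-space knot $K_i$, the staircase form of $\CFKoo(K_i)$ makes the generators directly labelled by the semigroup $\Gamma_i$, and the minimum Maslov degree of a nontorsion class in the piece with $j$-filtration $\geq n_i - \delta_i$ equals $2I^i(n_i)$ by \eqref{e:VIR}. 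Adding these contributions and minimizing over all integer decompositions yields
\[
V(m) \;=\; \min_{n_1 + \cdots + n_n = m + \delta} \sum_i I^i(n_i) \;=\; (I^1 \diamond \cdots \diamond I^n)(m+\delta),
\]
which is the claim. The passage from $V^i$-values to gap functions is purely a bookkeeping change of variable, and one checks that allowing $n_i$ to range over all of $\Z$ causes no harm because $I^i$ is eventually constant on the left and vanishes on the right, so only finitely many decompositions can realize the minimum.

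The main technical obstacle is justifying that the infimum really does factor as a sum of minima over the individual factors, rather than being attained by some non-pure cycle in the tensor product whose homology class cannot be written as a product of classes from the separate factors. The staircase form of each $\CFKoo(K_i)$ -- a hallmark of the $L$-space knot property shared by all links of cuspidal singularities -- is exactly what allows this reduction to a combinatorial optimization on the semigroups $\Gamma_1,\dots,\Gamma_n$; without it one would only obtain the infimum-convolution expression as an upper bound for $V(m)$.
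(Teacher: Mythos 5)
The paper offers no proof of this lemma: it is cited verbatim as \cite[Proposition~5.6]{BL}. So the comparison is against what an actual proof would require, and here your sketch has a real gap.

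The general framework you set up---K\"unneth for $\CFKoo$, the staircase form of $\CFKoo(K_i)$ for links of singularities, and the reformulation of the claim as a minimization
\[
V_m(K)\;=\;\min_{\sum a_i = m}\sum_i V_{a_i}(K_i)
\]
followed by the change of variables $n_i = a_i + \delta_i$ via~\eqref{e:VIR}---is the right outline. But the central assertion, that the subquotient $C^b_m(K)$ ``admits, as chain complex, a decomposition into tensor-product pieces indexed by all integer decompositions $m+\delta=\sum_i n_i$,'' is not correct as stated. The subcomplex $C(K)\{\max(i,j-m)<0\}$ is carved out of $\bigotimes_i C(K_i)$ by a condition on the \emph{total} bifiltration $(i,j)=(\sum i_k,\sum j_k)$, i.e.\ ``$\sum i_k<0$ and $\sum j_k<m$''; this is not a product (or direct-sum) of per-factor conditions, so the quotient does not split into tensor products of the individual $C^b_{m_i}(K_i)$. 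You cannot read off $V_m(K)$ by minimizing the per-factor contributions summand by summand.

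What this buys you unconditionally is only the easy inequality $V_m(K)\le \min\sum_i V_{a_i}(K_i)$ (subadditivity of $V$ under connected sum). The hard direction---that the infimum convolution is also a \emph{lower} bound, so that the minimum is actually attained by a pure tensor of cycles achieving the factor minima---is exactly where the staircase structure of $L$-space knots must be used in a substantive way, e.g.\ by writing down an explicit cycle generating the tower of $\HFoo$ as a tensor product of distinguished staircase generators and tracking the grading of its image in the big quotient complex. Your final paragraph names this as ``the main technical obstacle'' and then declares that the staircase form ``is exactly what allows this reduction,'' which is a restatement of the difficulty rather than a resolution of it. Until that step is actually carried out, the argument establishes $V(m)\le(I^1\diamond\cdots\diamond I^n)(m+\delta)$ but not the claimed equality.
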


The following theorem is a generalisation of Theorem~\ref{thm:main} for arbitrary cuspidal curves; the statement reduces to Theorem~\ref{thm:main} by setting $n=1$.

\begin{thm}\label{t:morecusps}
Let $C$ be a genus-$g$ curve of degree $d$ with $n$ cusps, and let $I^1,\dots, I^n$ be the gap counting functions associated to its singularities. Then for every $-1\le j\le d-2$ and for every $0\le k \le g$ we have
\[
k-g \le (I^1\diamond\cdots\diamond I^n)(jd+1-2k)- \frac{(d-j-2)(d-j-1)}2  \le k.
\]
\end{thm}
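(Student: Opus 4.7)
My plan is to apply the $d$-inequality (Theorem~\ref{d-inequality}) to both $W_C$ and $-W_C$, and translate the resulting bounds through Proposition~\ref{p:dbottom} and Lemma~\ref{l:morecusps}. The key input I need is that every \spinc structure $\s$ on $W_C$ restricting to a torsion \spinc structure on $Y_C$ satisfies $c_1^2(\s) = 0$: by Lemma~\ref{l:homology}, $H_2(W_C)$ is generated by rim tori of self-intersection zero, so the intersection form on $H_2(W_C)$ vanishes identically; hence the Poincar\'e--Lefschetz intersection pairing on $H^2(W_C,\partial W_C;\Q)$ is zero, and any rational lift of $c_1(\s)$ squares to zero. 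The same argument applies to $-W_C$.

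Using $c_1^2 = 0$, $b_2^-(\pm W_C) = 0$ and $b_1(\pm Y_C) = 2g$, Theorem~\ref{d-inequality} applied to an extension of $\t_{\ell d}$ yields $d_b(Y_C,\t_{\ell d}) \ge -g$ and $d_b(-Y_C,\t_{\ell d}) \ge -g$. Plugging in the formulas from Proposition~\ref{p:dbottom} and using that each value is at least the minimum, I obtain, for every $0 \le k \le g$ and every $\ell$ of the allowed parity in $[-d/2,d/2]$,
\[
k - \ell d - g - \tfrac{d_+(\ell d)}{2} \;\le\; V_{\ell d+g-2k}(K_C) \;\le\; k - \tfrac{d_-(\ell d)}{2}.
\]

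The substitution $\ell = j - (d-3)/2$ lies in $[-d/2,d/2]$ for $-1\le j\le d-2$ and has the correct parity (integer for $d$ odd, half-integer for $d$ even), so $\t_{\ell d}$ really does extend. Using the degree--genus identity $g+\delta = (d-1)(d-2)/2$ one finds $\ell d + g - 2k + \delta = jd+1-2k$; a direct calculation (both sides equal $[d(d-2j-3)+(j+1)(j+2)]/2$) also shows $-d_-(\ell d)/2 = (d-j-1)(d-j-2)/2 = -\ell d - d_+(\ell d)/2$. Applying Lemma~\ref{l:morecusps} to rewrite $V_m(K_C)$ as $(I^1\diamond\cdots\diamond I^n)(m+\delta)$ now converts the two displayed bounds into exactly the inequalities of Theorem~\ref{t:morecusps}.

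The delicate point is the vanishing $c_1^2(\s) = 0$: since $b_2(W_C) = 2g$ is in general nonzero, this does not follow from $b_2^\pm(W_C) = 0$ alone but uses the geometric content of Lemma~\ref{l:homology} that $H_2(W_C)$ is spanned by rim tori of square zero. Everything else is essentially bookkeeping, with the pleasant feature that the upper and lower bounds — coming from $W_C$ and $-W_C$ respectively — produce the same symmetric offset $(d-j-1)(d-j-2)/2$.
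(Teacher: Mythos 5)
Your proposal is correct and follows the same route as the paper: apply Theorem~\ref{d-inequality} to $\pm W_C$ with the \spinc structures that extend over $W_C$, translate through Proposition~\ref{p:dbottom}, relate $V$ to the infimum convolution of gap functions via Lemma~\ref{l:morecusps}, and substitute $j = h + (d-3)/2$ (your $\ell$). The computation that both bounds produce the offset $(d-j-1)(d-j-2)/2$, and that the argument $\ell d + g - 2k + \delta = jd+1-2k$ via $g + \delta = (d-1)(d-2)/2$, agrees with the paper.

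One small correction to what you flag as the ``delicate point.'' You argue that $c_1^2(\s)=0$ requires the geometric fact that $H_2(W_C)$ is generated by rim tori of square zero, claiming this does not follow from $b_2^\pm(W_C)=0$ alone. In fact it does: $b_2^+(W) + b_2^-(W)$ is precisely the rank of the intersection form on $H_2(W;\R)$, so $b_2^\pm(W_C) = 0$ forces that form to vanish identically, and $c_1^2(\s)$ (computed from any rational relative lift) is zero for every \spinc structure with torsion boundary restriction. That is exactly why the paper writes that the left-hand side of \eqref{eqn:d-ineq} vanishes ``since $b_2^\pm(W_C)=0$.'' Moreover, the reasoning ``generated by tori of self-intersection zero, so the form vanishes identically'' is itself incomplete as stated: a basis of square-zero classes can still carry a nonzero form (e.g.\ a hyperbolic pair), so it is really the equality $b_2^+=b_2^-=0$ from Lemma~\ref{l:homology}, not the square-zero generators per se, that kills the form. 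This does not affect the correctness of your proof, since you reach the right conclusion; it just means the step you regarded as subtle is the most routine one.
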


\begin{proof}
Notice that both $W_C$ and $-W_C$ fulfil the hypotheses of Theorem~\ref{d-inequality}: in fact, Lemma~\ref{l:homology} asserts that $b^\pm_2(W_C) = 0$, so both $W_C$ and $-W_C$ are negative semidefinite; moreover, by Lemma~\ref{l:homology} and the universal coefficient theorem, $H^1(W_C;\Z)$ is trivial, so the restriction map $H^1(W_C;\Z)\to H^1(Y_C;\Z)$ is trivial; $\HFoo(-Y_C;\t)$ is standard in any torsion spin$^c$ structure $\t$, thanks to Lemma~\ref{l:standardHFoo}.

Notice that the image of the restriction map $\Spinc(W_C)\to\Spinc(Y_C)$ contains exactly $d$ torsion spin$^c$ structures. If $d$ is even, these are labelled by $hd$ where $h\in[-d/2,d/2]$ is a half-integer; if $d$ is odd, they are labelled by $hd$ where $h\in[-d/2,d/2]$ is an integer. Let us call $\s_h$ any spin$^c$ structure on $W_C$ that restricts to $\t_{hd}$ on $Y_C$.

Finally, let us observe that when $m=hd$ we have
\[
d_\pm(hd) = \frac{d^2-(2hd\pm d^2)^2}{4d^2} = \frac{1-(2h\pm d)^2}{4}.
\]

We now apply Theorem~\ref{d-inequality} to $(W_C,\s_h)$ and its boundary $(Y_C,\t_{hd})$; let $V(m) = V_m(K_C)$.

The left-hand side of the inequality~\eqref{eqn:d-ineq} vanishes, since $b_2^\pm(W_C) = 0$; also, $b_1(Y_C) = 2g$, therefore we get:
\begin{equation}
0\le g+\min_{0\le k \le g}\{2k-g-2V({hd+g-2k})\} - \frac{1-(d-2h)^2}{4}.
\end{equation}

Applying it to $-(W_C,\s_h)$ and its boundary $-(Y_C,\t_{hd})$ we get:
\begin{equation}
0\le  g + \min_{0\le k \le g}\{g-2k+2hd+2V({hd+g-2k})\} + \frac{1-(d+2h)^2}{4}.
\end{equation}

In particular, for every $h,k$ in the relevant ranges we have:
\[
k-g \le V({g-2k+hd}) - \frac{(d-2h)^2-1}{8} \le k.
\]

Rephrasing it in terms of the function $I = I^1\diamond\cdots\diamond I^n$, we get:
\[
k-g \le I((g+\delta)-2k+hd) - \frac{(d-2h)^2-1}{8} \le k.
\]

Since $2(g+\delta) = (d-1)(d-2)$, the substitution $j = h+(d-3)/2$ yields
\[
k-g \le I({jd+1-2k}) - \frac{(d-j-2)(d-j-1)}{2} \le k.\qedhere
\]
\end{proof}

\begin{rmk}\label{r:semigroup_property}
As many of the examples and applications become more transparent in the language of the semigroup counting function $R$ introduced earlier, we rephrase the inequalities in Theorem~\ref{thm:main} as follows:
\begin{equation}\label{eq:sem_ineq}
 0 \leq R({jd+1-2k}) + k - \frac{(j+1)(j+2)}{2} \leq g
\end{equation}
for every $j = -1, 0, \dots, d-2$ and $k = 0, \dots, g$.
\end{rmk}
 % topology, heegaard floer, and proof of theorem 1.1
\section{Unicuspidal curves with one Puiseux pair}\label{s:acc}

In this section we fix a positive integer $g$ and we restrict ourselves to unicuspidal curves 
of genus $g$ whose singularity has only one Puiseux pair $(a,b)$ with $a<b$; in this case, we say that the curve is $(a,b)$-\e{unicuspidal}. The degree-genus formula specialises to the following identity:
\begin{equation}\label{eq:dg_one_p}
 (d-1)(d-2) = (a-1)(b-1)+2g.
\end{equation}
This also means that, once we fix $g$, the pair $(a,b)$ determines uniquely the degree $d$ (unless $g=0$ and $a=1$, in which case we have no singularity).

\begin{defn} \label{def:admissible}
We say that a pair $(a,b)$ with $a < b$ is a \emph{candidate} (to be the Puiseux pair of a unicuspidal genus $g$ curve) if $a$ and $b$ are coprime and there is a positive integer $d$ such that the degree-genus formula~\eqref{eq:dg_one_p} holds.

If the corresponding semigroup counting function $R$ satisfies~\eqref{eq:sem_ineq} with the given genus $g$ for all possible values of $j$ and $k$, we say that the pair $(a,b)$ is an \emph{admissible candidate}.
\end{defn}

We are going to say that a certain property holds for \e{almost all} elements in a set if there are finitely many elements for which it does not hold.

Theorem~\ref{thm:pell} is a consequence of the following two propositions. Recall that we fixed the genus $g \geq 1$ of the curves we consider.

\begin{prop}\label{p:67}
If $g \geq 1$, then for almost all admissible candidates $(a,b)$ the ratio $b/a$ lies in the interval $(6,7)$.
\end{prop}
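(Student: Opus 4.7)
My plan is to exploit the chain of inequalities
\begin{equation*}
0 \le R(jd+1-2k) + k - \tfrac{(j+1)(j+2)}{2} \le g
\end{equation*}
from Remark~\ref{r:semigroup_property}, valid for all integers $-1 \le j \le d-2$ and $0 \le k \le g$, together with the degree-genus identity $(d-1)(d-2) = (a-1)(b-1)+2g$. The semigroup $\Gamma = \langle a, b \rangle$ has the explicit counting function
\begin{equation*}
R(m) = \#\{(\alpha, \beta) \in \Z_{\ge 0}^2 : \alpha a + \beta b \le m-1\},
\end{equation*}
and my strategy is to rule out every integer value $q = \lfloor b/a\rfloor \ne 6$ by a case-by-case contradiction.

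I would first reduce to the asymptotic regime. Fixing $g \ge 1$, the degree-genus formula forces $a, b \to \infty$ and $ab = d^2(1+o(1))$ as $d \to \infty$, and only finitely many admissible candidates have $d$ below any prescribed bound, so it suffices to control each $q$ for large $d$. Writing $b = qa + r$ with $0 \le r < a$, the ratio $d/a$ tends to $\sqrt{q}$; thus for small $j$ the location of $jd + 1 - 2k$ relative to the intervals $[\ell a, (\ell+1)a)$ and $[\ell b, (\ell+1)b)$ is determined by $q$ up to finitely many exceptions. A quick application of the inequality with $(j,k)=(1,0)$, which forces $R(d+1)\in[3,3+g]$, already gives a crude a priori bound $q \lesssim (2+g)^2$, so only finitely many values of $q$ need to be handled.

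The bulk of the argument is then the case analysis for each $q \in \{1, \dots, 5\} \cup \{7, 8, \dots\}$ up to the above bound. For each such $q$ I would choose $j$ and $k$ so that $jd+1-2k$ lands in a predictable slot of the lattice generated by $a$ and $b$, permitting $R(jd+1-2k)$ to be evaluated exactly modulo a controlled lower-order error coming from the fractional parts of $jd/a$ and $jd/b$. For $q \in \{1,2,3,4,5\}$ I expect choices with small $j$ (and $k=0$) to yield $R(jd+1-2k) + k - \binom{j+2}{2} < 0$, whereas for $q \ge 7$ dual choices (larger $j$, $k = g$) should give $R(jd+1-2k) + k - \binom{j+2}{2} > g$. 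Either conclusion contradicts~(\ref{eq:sem_ineq}) for all sufficiently large $d$, leaving at most finitely many admissible candidates with that value of $q$, and summing over the finitely many excluded values of $q$ yields the proposition.

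The main obstacle I anticipate lies in the borderline sub-cases where~(\ref{eq:sem_ineq}) is nearly saturated. As flagged in the remark following Theorem~\ref{thm:pell}, when $g = 1$ the family $(a, b) = (l, 9l+1)$ with $d = 3l$ (Case VII) evades Theorem~\ref{thm:main} entirely: one checks directly that every instance of~(\ref{eq:sem_ineq}) is satisfied, so its elimination requires the separate input of Orevkov's asymptotic multiplicity--degree inequality from~\cite{Or}. The remaining borderline values ($q = 5$ and $q = 7$ in particular) should demand the most delicate bookkeeping, since the leading-order estimate barely provides the contradiction and the precise residues of $jd \bmod a$ and $jd \bmod b$ enter in an essential way.
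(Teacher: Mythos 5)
Your overall architecture — a case analysis by $\lfloor b/a\rfloor$, driven by the semigroup inequalities~\eqref{eq:sem_ineq} and the degree-genus formula~\eqref{eq:dg_one_p}, with the finiteness reduction for bounded $a$ or $d$ dispatched first — matches the paper's proof, and you correctly flag the $g=1$ family $(a,b)=(l,9l+1)$ as evading Theorem~\ref{thm:main} and needing Orevkov's inequality as external input. But your closing mechanism for each case has a genuine gap. You expect to find $(j,k)$ forcing $R(jd+1-2k)+k-\binom{j+2}{2}$ outside $[0,g]$, i.e.\ a direct violation of~\eqref{eq:sem_ineq}. This works in exactly one of the paper's seven cases ($5 < b/a \le 6$, where $(\star_2)$ gives $b/a \succeq 25/4 > 6$, a contradiction). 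In every other range of $b/a$ outside $(6,7)$ the inequalities do \emph{not} contradict one another; they pin $(a,b)$, for almost all admissible candidates, onto an exact integral line such as $b=a+1$, $b=4a-1$, $b=4a+1$, $9b=64a+1$, $b=9a-1$, or $b=9a+1$. The finiteness then comes from a separate Diophantine step your plan omits entirely: the integrality of $d$ forces $4(a-1)(b-1)+8g+1$ to be an odd perfect square, and substituting each linear relation gives an equation of the form $(\text{linear in } a)^2 + c(g) = K^2$ with $c(g)\ne 0$ precisely when $g\ge 1$, hence only finitely many solutions. Without this step, estimating $R$ to leading order simply cannot conclude.

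A second gap is your reliance on ``choices with small $j$.'' In the ranges $7<b/a\le 8$, $8<b/a\le 9$, and $b/a>9$ the paper applies $(\star\star_j)$ for $j = 6k+17$ or $j = 6k+4$ with $k$ tending to infinity (chosen as a function of $g$), and passes to the limit to shrink the band around the limiting line from width $O(g)$ down to width $<2$, which is what isolates the integral relation; bounded $j$ leaves a band too wide for the Diophantine step. Relatedly, your a priori bound $q \lesssim (2+g)^2$ via $(j,k)=(1,0)$ is unnecessarily weak and $g$-dependent: a single application of $(\star\star_1)$, combined with Lemma~\ref{lemma:asympt}, already yields $b/a \preceq 9$, so the case analysis is uniformly seven cases regardless of $g$. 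Finally, your intuition about which borderline values are delicate is inverted: $q=5$ is the one genuinely easy case, while $q\ge 7$, and especially $7<b/a\le 8$, demand the longest chains of $(\star_j)$ and $(\star\star_j)$ applications together with a claim about $\Gamma(45)$ and a careful count of semigroup elements preceding $3b+24a$.
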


\begin{prop}\label{p:3d}
If $g \geq 1$, then for almost all admissible candidates $(a,b)$ such that $b/a\in(6,7)$ we have $a+b = 3d$.
\end{prop}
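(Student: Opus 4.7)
The plan is to apply Theorem~\ref{thm:main} (via Remark~\ref{r:semigroup_property}) with carefully chosen $(j,k)$, combined with an explicit analysis of the semigroup counting function of $\langle a,b\rangle$, to force $N := a+b-3d = 0$. Setting $t := d/a$ and substituting $b = 3d - a + N$ into the degree-genus formula yields
\[
a^2(t^2 - 3t + 1) = (a-1)N + 2g - 1,
\]
so $t \to \phi^2 = (3+\sqrt 5)/2$ as $a \to \infty$ (under $b/a \in (6,7)$) with $t - \phi^2 \sim N/(a\sqrt 5)$. Write
\[
R(m) = \sum_{q \ge 0,\; qb \le m-1}\left(\left\lfloor\frac{m - 1 - qb}{a}\right\rfloor + 1\right);
\]
the $q$-th ``ladder'' boundary is $(jd-qb)/a = (j-3q)t + q - qN/a$.

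To rule out $N \ge 1$, I would use the lower bound $R(3d+1) \ge 10$ from $(j,k)=(3,0)$. Since $b/a < 7$ and $N \ge 1$ yield $3d/a = 1 + b/a - N/a < 8$, the $q = 0$ ladder contributes at most $8$ elements; the $q = 1$ ladder contributes at most $1$ element (namely $\lfloor (a-N)/a\rfloor + 1 \le 1$ for $1 \le N \le a$, and $0$ for $N > a$); higher ladders vanish because $2b > 12a > 3d$. Hence $R(3d+1) \le 9$, a contradiction, so $N \le 0$.

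The case $N \le -1$ is the delicate one: for $N \le 0$ the $q = 1$ ladder regains its element and the $(j,k) = (3,0)$ inequality is satisfied at equality. Writing $M := -N \ge 1$, the deviation $\phi^2 - t$ is now of order $M/(a\sqrt 5)$, much larger than the $O(1/a^2)$ deviation when $N = 0$. I would select a larger $j$: since $\phi^2 = [2;\overline 1]$ has Fibonacci convergents, for odd-indexed Fibonacci $j$ the fractional part $\{j\phi^2\}$ can be made arbitrarily small. Once $jM/(a\sqrt 5) > \{j\phi^2\}$, one has $\lfloor jt\rfloor = \lfloor j\phi^2\rfloor - 1$: the $q = 0$ ladder of $R(jd+1)$ loses one element compared to the $N = 0$ case, and a careful expansion of the remaining ladders shows this deficit is uncompensated, whence $R(jd+1) \le \binom{j+2}{2} - 1$ contradicts the lower bound. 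Finally, for each fixed $N \ne 0$ the degree-genus formula is a Pell-type recursion yielding a family $(a_n, d_n)$ growing geometrically; the preceding analysis eliminates all but finitely many of its members, and together with the linear bound on $|N|$ coming from $b/a \in (6,7)$, only finitely many admissible candidates have $a+b \ne 3d$. The hardest step is $N \le -1$, requiring both the Diophantine properties of $\phi^2$ via its Fibonacci convergents and a precise tracking of all ladder contributions to verify that the $q=0$ loss is not recovered elsewhere.
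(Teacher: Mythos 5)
Your opening move — deriving $N := a+b-3d \le 0$ from $(\star_3)$ — is correct and matches the paper: since $6a<b<7a$, one has $\Gamma(10)=a+b$, and $(\star_3)$ says $\Gamma(\Delta_3)\le 3d$. Your ladder count is a slightly longer route to the same conclusion. Your instinct for the hard direction is also right: the point is the Diophantine approximation of $\phi^2$ by Fibonacci-indexed convergents, and the paper indeed applies $(\star_j)$ with $j=F_{2l+1}$.

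But the core of the argument is missing. You assert that once $\lfloor jt\rfloor=\lfloor j\phi^2\rfloor-1$, ``a careful expansion of the remaining ladders shows this deficit is uncompensated.'' This is precisely what is \emph{not} automatic: the element $F_{2l-1}b$ sits right at the boundary of the relevant triangle of lattice points, and whether it falls below or above $F_{2l+3}a$ determines whether the deficit is absorbed. The paper's Lemma~\ref{lem:key} proves a genuine \emph{dichotomy} — either $F_{2l+3}a\le F_{2l+1}d$ or $F_{2l-1}b\le F_{2l+1}d$ — by a lattice-point count (the triangle $H_l$ has $\Delta_{F_{2l+1}}+1$ integer points, with $(F_{2l-1},0)$ and $(0,F_{2l+3})$ being its only nonzero boundary lattice points, a fact relying on Claim~\ref{c:smallestfrac} and identity~\eqref{eq:fib1}). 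Even granting the dichotomy, one still needs Lemma~\ref{lem:bound}, which converts either alternative into an explicit bound $a\le 2(2g-1)F_{2l+1}+2$ or $b\le 2(2g-1)F_{2l+1}^2/F_{2l-1}+2$ via a substitution into the degree-genus formula and identity~\eqref{eq:fib4}, and then Lemma~\ref{lem:limzero} to show $\phi^4a-b\to0$ as the sector index grows. None of this is present in your sketch.

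Finally, the closing step in your proposal is wrong as stated. There is no ``linear bound on $|N|$ coming from $b/a\in(6,7)$'': the constraint $6a<b<7a$ translates to $N\in(7a-3d,8a-3d)$, an interval of length $a$, so $|N|$ can a priori grow linearly in $a$. What the paper actually bounds is $a$ (and hence $b,d$), not $N$: almost every admissible pair with $N<0$ lies strictly between the hyperbola $\gamma_1$ and the line $b=\phi^4a-1$, which intersect, forcing $a\le 2g$. You would need to replace the last paragraph with this argument. In short, you have the right scaffolding and the right object ($\phi^2$, Fibonacci indices), but the proof of the ``uncompensated deficit'' claim, the sectorial bounds, and the correct finiteness conclusion are all missing, and they constitute the bulk of the work.
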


\begin{rmk}
We note here that in the proof of Proposition~\ref{p:67} we use recent work of Borodzik, Hedden and Livingston~\cite{BHL} to exclude the family $(a,b) = (l, 9l+1)$ in the case $g=1$.
\end{rmk}

\begin{proof}[Proof of Theorem~\ref{thm:pell}]
Combining the two propositions above, we get that almost all admissible pairs $(a,b)$ satisfy $a+b=3d$. If we substitute $3d=a+b$ in the degree-genus formula~\eqref{eq:dg_one_p} we readily obtain Equation~\eqref{eq:pell}.
\end{proof}

We prove Proposition~\ref{p:67} in Subsection~\ref{ss:67} and Proposition~\ref{p:3d} in Subsection~\ref{ss:3d}. We now turn to the proof of the corollaries stated in the introduction.

\begin{proof}[Proof of Corollary~\ref{c:finite}]
If $g\equiv2$ or $g\equiv 4$ modulo 5, the congruence $x^2\equiv 4(2g-1)\pmod 5$ has no solution, since $2$ and $3$ are not quadratic residues modulo 5. Hence Equation~\eqref{eq:pell} has no solution.
\end{proof}

We actually classify the genera $g$ such that Equation~\eqref{eq:pell} has a solution $(a,b)$ with $a,b$ coprime. This is done in Section~\ref{s:pell} below.

\begin{proof}[Proof of Corollary~\ref{c:acc}]
Plugging the relation $b = 3d - a$ into~\eqref{eq:dg_one_p} we get that for almost all pairs
\[ (d-1)(d-2) = (a-1)(3d - a - 1) + 2g. \]
Recall that $a$ is the multiplicity of the singularity, i.e.~the local intersection multiplicity of the singular branch with a generic line through the singular point. Therefore, due to B\'ezout's theorem, it can not be larger than $d$. So from the above equation one can compute
\[ a = \frac{3d-\sqrt{5d^2 + 4(2g-1)}}{2} \]
and notice that $\phi^2a - d$ is bounded for $a \geq 0, d \geq 0$.
\end{proof}

\subsection{The proof of Proposition~\ref{p:67}}\label{ss:67}

Before diving into the actual proof, we set up some notation and some preliminaries.

We are going to denote with $\N$ the set of non-negative integers, $\N = \{0,1,\dots\}$. Recall that the semigroup $\Gamma \subset \N$ associated to the singularity with Puiseux pair $(a,b)$ is generated by $a$ and $b$: $\Gamma = \langle a,b \rangle$. For any positive integer $n$ let us denote by $\Gamma(n)$ the $n$-th smallest element (with respect to the natural ordering of integers) of the semigroup $\Gamma$; for example, $\Gamma(1)$ is always 0 and $\Gamma(2)$ is always $a$.

We introduce the notation $\Delta_j$ for the triangular number $\frac{(j+1)(j+2)}{2}$. Setting $k = 0$ and using the lower bound in~\eqref{eq:sem_ineq}, for every $j = 0, 1, \dots, d-2$ we get the inequalities:
\begin{equation}\label{eq:spec1}
 \Delta_j \leq R_{jd+1}, \textrm{\ or, equivalently, \ } \Gamma(\Delta_j) \leq jd; \tag{$\star_j$}
\end{equation}
while setting $k = g$ and using the upper bound, for every $j = 0, 1, \dots, d-2$ we get:
\begin{equation}\label{eq:spec2}
 R_{jd+1-2g} \leq \Delta_j, \textrm{\ or, equivalently, \ } \Gamma(\Delta_j+1) > jd - 2g. \tag{$\star\star_j$}
\end{equation}

Every semigroup element can be expressed as $ub + va$ for some non-negative integers $u$ and $v$. Writing $\Gamma(\Delta_j) = ub+va$,~\hyperref[eq:spec1]{($\star_j$)} reads $ub/j + va/j \leq d$, and substituting this into the degree-genus formula~\eqref{eq:dg_one_p} we get:
\begin{equation}\label{eq:quad_1}
2g + (a-1)(b-1) \geq \left(\frac{u}jb + \frac{v}ja -1\right)\left(\frac{u}jb + \frac{v}ja-2\right).
\end{equation}

Analogously, if we write $\Gamma(\Delta_j+1) = ub+va$,~~\hyperref[eq:spec2]{($\star\star_j$)} reads $ub/j + va/j > d - 2g/j$, and substituting this into~\eqref{eq:dg_one_p} we get:
\begin{equation}\label{eq:quad_2}
2g + ab-a-b+1 < \left(\frac{u}jb + \frac{v}ja +\frac{2g-j}j\right)\left(\frac{u}jb + \frac{v}ja+\frac{2g-2j}j\right).
\end{equation}

Equations~\eqref{eq:quad_1} and~\eqref{eq:quad_2} give a prescribed region for admissible candidate pairs $(a,b)$; since the two inequalities are quadratic in $a$ and $b$, the boundary of such regions is a conic, typically a hyperbola. We are interested in the equation of the asymptotes of these hyperbolae, especially their slope. This motivates the following definition.

\begin{defn}\label{def:approx}
We say that for a set $\mathcal{P}\subset\N^2$ of pairs $(a,b)$ the \emph{asymptotic inequality} $\frac{b}{a} \preceq \alpha$ (respectively $\frac{b}{a} \succeq \alpha$) holds, if there is a constant $C$ such that $b \leq \alpha a + C$ (resp. $b \geq \alpha a + C$) for almost all pairs in $\mathcal{P}$. We say that $\frac{b}{a} \approx \alpha$ if both $\frac{b}{a} \preceq \alpha$ and $\frac{b}{a} \succeq \alpha$.
\end{defn}

\begin{rmk}
In this language, the Matsuoka--Sakai inequality means that $d/a \preceq 3$ and Orevkov's sharper result \cite{Or} means that $d/a \preceq \phi^2$. Notice that these results hold without any restrictions on the number of cusps and the Puiseux pairs: if we let $m_p$ be the minimal positive element in the semigroup of the singularity at $p$, then $a$ is replaced with $\max m_p$.
%\marginJ{Changed this back to $m$, because at the end it said "without any restrictions on the number of cusps", but if we keep $a$, what is $a$ in that case anyway???}
%In this language, the Matsuoka--Sakai inequality means that $d/m \preceq 3$ and Orevkov's sharper result
%\marginJ{added a reference}
%(\cite{Or}) means that $d/m \preceq \phi^2$, for pairs $(m, d)$ coming from \e{rational} cuspidal curves of degree $d$ and maximal multiplicity $m$ (the multiplicity of a cusp is defined as the local intersection multiplicity of the curve with a line through that point in a generic position). Notice that these results hold without any restrictions on the number of cusps and the Puiseux pairs. In the case of $(a,b)$-unicuspidal curves, $m=a$.
\end{rmk}

We now compute the slopes and equations of asymptotes of region boundaries arising from~\eqref{eq:quad_1} and~\eqref{eq:quad_2}.

\begin{lemma}\label{lemma:asympt}
Fix real constants $p,q,c_1,c_2,c_3$ such that $1-4pq>0$. If $p\neq 0$, let $\lambda_\pm = (1-2pq\pm\sqrt{1-4pq})/2p^2$. Let $D\subset \R^2$ be defined by the inequality
\[
(py + qx + c_1)(py + qx + c_2) \leq (y - 1)(x - 1) + c_3
\]
and let $((a_n, b_n))_{n\ge1}$ be a sequence of pairs of non-negative integers 
$a_n\leq b_n$ with $a_n\to\infty$.

If almost all pairs $(a_n, b_n)$ belong to $D$ then:
\begin{itemize}
\item if $p=0$, then $b_n/a_n \succeq q^2$;
\item if $p\neq 0$, then
\[\lambda_- \preceq \frac{b_n}{a_n} \preceq \lambda_+.\]
\end{itemize}

On the other hand, if almost all pairs $(a_n, b_n)$ do not belong to $D$, then:
\begin{itemize}
\item if $p=0$, then $b_n/a_n \preceq q^2$;
\item if $p\neq 0$, %let $\lambda_\pm = (1-2pq\pm\sqrt{1-4pq})/2p^2$;
then either
\[\frac{b_n}{a_n} \preceq \lambda_- \textrm{ or } \lambda_+\preceq \frac{b_n}{a_n},\]
in the sense that the pairs can be divided into two subsets such that for the pairs in the first, resp. in the second subset the first, resp. the second asymptotic inequality holds.
\end{itemize}
\end{lemma}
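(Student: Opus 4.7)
The strategy is to expand the quadratic inequality defining $D$ and exploit the factorisation of its leading quadratic form. Writing $L(x,y) = \alpha x + \beta y$, the inequality defining $D$ takes the form $Q(x,y) + L(x,y) + c \leq 0$, where $Q(x,y) = p^2 y^2 + (2pq - 1)xy + q^2 x^2$ is a homogeneous quadratic. Treating $Q$ as a quadratic in $y$ with parameter $x$, its discriminant is $x^2(1-4pq)$, positive by hypothesis. Hence $Q$ factors as $-x(y - q^2 x)$ when $p = 0$, and as $p^2(y - \lambda_+ x)(y - \lambda_- x)$ when $p \neq 0$. Thus $\partial D$ is a conic --- a line in the degenerate case, a hyperbola with asymptotic slopes $\lambda_\pm$ otherwise --- and $D$ is bounded by this conic.

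The case $p = 0$ is immediate: the inequality reads $-a_n(b_n - q^2 a_n) + L(a_n, b_n) + c \leq 0$; dividing by $a_n$ and controlling the linear terms as $a_n \to \infty$ forces $b_n \geq q^2 a_n - C$ for pairs in $D$, and the reversed bound for pairs outside.

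For $p \neq 0$ and $(a_n, b_n) \in D$, I substitute $b_n = \lambda_+ a_n + s_n$, which yields
\[
p^2(\lambda_+ - \lambda_-) s_n a_n + p^2 s_n^2 + (\alpha + \beta \lambda_+) a_n + \beta s_n + \gamma \leq 0.
\]
If $s_n$ were to exceed $s_0 := -(\alpha + \beta \lambda_+)/(p^2(\lambda_+ - \lambda_-))$ by a fixed positive amount along a subsequence, the coefficient of $a_n$ on the left would be bounded below by a positive constant, forcing the left-hand side to $+\infty$ as $a_n \to \infty$ --- a contradiction. Hence $s_n$ is bounded above by a constant, so $b_n \leq \lambda_+ a_n + C$; the symmetric substitution $b_n = \lambda_- a_n + t_n$ delivers $b_n \geq \lambda_- a_n - C'$. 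For pairs \emph{not} in $D$, I partition them according to whether $b_n \gtrless (\lambda_+ + \lambda_-) a_n / 2$, so that each subclass lies asymptotically on one side of both asymptotes. In the upper subclass the substitution $b_n = \lambda_+ a_n + s_n$ produces the inequality with reversed sense, and a parallel analysis --- separating the subcases $|s_n| = o(a_n)$ and $|s_n| = \Theta(a_n)$, which are dominated respectively by the linear-in-$s_n a_n$ term and by the quadratic-in-$a_n$ term $p^2 s_n^2 + p^2(\lambda_+ - \lambda_-)s_n a_n$ --- forces $s_n$ to be bounded below. The lower subclass is treated analogously.

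The main technical obstacle is confirming that each error is a genuine constant rather than $o(a_n)$; this encodes the fact that the asymptotes of the hyperbola $\partial D$ are honest affine lines $y = \lambda_\pm x + \mu_\pm$, at bounded offset from the through-origin lines $y = \lambda_\pm x$, and the sign-tracking of the leading $a_n$-coefficient in the substitutions above is precisely what makes this bounded offset quantitative.
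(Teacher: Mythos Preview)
Your proof is correct and follows essentially the same approach as the paper's: both identify the boundary of $D$ as a conic whose asymptotic slopes are $q^2$ (with one vertical asymptote) when $p=0$, and $\lambda_\pm$ (the roots of $p^2\lambda^2 + (2pq-1)\lambda + q^2 = 0$) when $p\neq 0$, and then read off the asymptotic inequalities from this. The paper's proof is very brief --- it simply names the asymptote equation, computes the slopes, and declares the remaining analysis ``straightforward'' --- whereas you have actually carried out that analysis via the substitution $b_n = \lambda_\pm a_n + s_n$ and a careful sign-tracking of the dominant terms, which is exactly what the paper's Remark~\ref{rem:const} alludes to when it speaks of the ``optimal constant'' being the constant term in the asymptote's equation.
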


\begin{proof}
If $p=0$, the conic has a vertical asymptote; the other asymptote is defined by the equation $(qx + c_1)(qx + c_2) = (y - 1)(x - 1) + c_3$ and has slope $q^2$, from which we immediately obtain that $b_n/a_n\succeq q^2$ if almost all pairs $(a_n,b_n)$ are in $D$, and $b_n/a_n\preceq q^2$ if almost all pairs are outside $D$.

A similar argument applies when $p\neq 0$. In this case, both asymptotes are non-vertical and their slopes are the solutions of the equation $p^2\lambda^2+(2pq-1)\lambda+q^2 = 0$, which are precisely $\lambda_\pm$. The analysis of the two cases is straightforward.
\end{proof}

\begin{rmk}\label{rem:const}
In some cases we will also need to compute (in terms of $p, q, c_1, c_2, c_3$) the largest constant $C_l$, respectively the smallest constant $C_s$, for which the following property holds: for every $\varepsilon > 0$, for almost all pairs $(a,b)$ satisfying the assumptions of the lemma, the suitable combination of inequalities (depending on the actual applicable statement of the lemma) of type
    \[ \lambda_{\pm}a + C_l - \varepsilon < b,\quad \textrm{resp.\ }  b < \lambda_{\pm}a + C_s + \varepsilon \]
holds. We will call such constants \emph{optimal}.
%In some cases we will need to compute the optimal constant $C$ (in terms of $p,q,c_1,c_2,c_3$) such that for any $\epsilon > 0$ for almost all pairs $(a, b)$ satisfying the assumptions of the lemma, at least one of the following inequality holds (for appropriate values of the parameters):
%\[ \lambda_{\pm} a + C - \epsilon < b   \textrm{,\ or\ } b < \lambda_{\pm} a + C + \epsilon \]
Rather than \e{a priori} computing the explicit constant, we will do it only when needed. Observe that the optimal constant is in fact the constant term in the normalized equation $y = Ax+C$ of the line which is the asymptote of the (relevant branch of the) hyperbola described by
  \[ (py+qx+c_1)(py+qx+c_2) = (y-1)(x-1)+c_3. \]
\end{rmk}

We set out to prove that $b/a$ lies in the interval $(6,7)$ for almost all admissible candidates $(a,b)$.

\begin{lemma}\label{lem:boundedpairs}
For every $M>0$ there are finitely many admissible candidates with $a<M$.
\end{lemma}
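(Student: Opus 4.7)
The plan is to apply the admissibility inequality $(\star\star_1)$ from~\eqref{eq:spec2}, which reads $\Gamma(\Delta_1+1) = \Gamma(4) > d - 2g$. The key observation is that as soon as $b > 3a$, the four smallest elements of the semigroup $\Gamma = \langle a, b \rangle$ are simply $0, a, 2a, 3a$, so $\Gamma(4) = 3a$; the inequality then becomes a linear upper bound on $d$ in terms of $a$ and $g$, which the degree-genus formula translates into a quadratic upper bound on $b$.

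Before applying $(\star\star_1)$ I need to check that $j = 1$ lies in the valid range $\{0, \dots, d-2\}$. The standing assumptions $g \ge 1$, $a \ge 2$, and $b > a$ together with~\eqref{eq:dg_one_p} give $(d-1)(d-2) = (a-1)(b-1) + 2g \ge 3$, so $d \ge 4$ and $(\star\star_1)$ is always available.

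Now fix an integer $a$ with $2 \le a \le M$ and suppose $(a, b)$ is an admissible candidate. If $b \le 3a$ then $b \le 3M$, so there are only finitely many possibilities. If instead $b > 3a$, then $\Gamma(4) = 3a$ and $(\star\star_1)$ gives $d \le 3a + 2g - 1 \le 3M + 2g - 1$; plugging this into~\eqref{eq:dg_one_p} and using $a - 1 \ge 1$ yields
\[
b - 1 \;=\; \frac{(d-1)(d-2) - 2g}{a-1} \;\le\; (3M + 2g - 2)(3M + 2g - 3),
\]
an explicit bound depending only on $M$ and $g$. Summing over the finitely many integers $a \in \{2, \dots, \lfloor M \rfloor\}$ finishes the proof. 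There is no real obstacle here: the only subtlety is the case split for computing $\Gamma(4)$ according to whether $b \le 3a$ or $b > 3a$, together with the trivial observation that the hypothesis $d \ge 3$ required to invoke $(\star\star_1)$ is automatic once $g \ge 1$.
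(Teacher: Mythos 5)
Your proof is correct and uses essentially the same ingredients as the paper's: the inequality $(\star\star_1)$ together with the observation that $\Gamma(4)=3a$ once $b>3a$, plus the degree-genus formula to convert the resulting bound on $d$ into a bound on $b$. The paper phrases the argument as a proof by contradiction (assuming infinitely many candidates, deducing $d>3M+2g$, then contradicting $a<M$), while you give the equivalent direct bound $d\le 3a+2g-1$; this is a cosmetic difference only.
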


\begin{proof}
Suppose that there are infinitely many admissible candidates with $a<M$. Then for infinitely many candidates $b>3M>3a$ holds. From Equation~\eqref{eq:dg_one_p}  it follows that $d>3M+2g$ for infinitely many candidates; if $b > 3a$, however, the fourth semigroup element is $3a$. By~\hyperref[eq:spec2]{($\star\star_1$)}, we get $3a>d-2g>3M$, contradicting the assumption $a<M$.
\end{proof}

We handle the problem in seven cases, depending on the integer part of $b/a$. The general pattern of the proof in each case is the following. First, we choose an appropriate $j$ and we determine $u$ and $v$ such that $\Gamma(\Delta_j) = ub + va$ (respectively $\Gamma(\Delta_j + 1) = ub + va$). We then apply~\hyperref[eq:spec1]{($\star_j$)} or \hyperref[eq:spec2]{($\star\star_j$)} to get a quadratic inequality of type~\eqref{eq:quad_1} or \eqref{eq:quad_2}.

Of course, we can apply \hyperref[eq:spec1]{($\star_j$)} and \hyperref[eq:spec2]{($\star\star_j$)} only when $j \leq d-2$, but since for any bounded $d$ there are only finitely many admissible candidates by the degree-genus formula \eqref{eq:dg_one_p}, any result obtained by applying \hyperref[eq:spec1]{($\star_j$)} or \hyperref[eq:spec2]{($\star\star_j$)} with $j$ bounded will be valid for almost all admissible candidates. We wish to emphasize here that the actual upper bound for $j$ may (and in many cases indeed will) depend on the fixed genus $g$.

Finally, we apply Lemma~\ref{lemma:asympt} and Remark~\ref{rem:const} to obtain from \eqref{eq:quad_1} and \eqref{eq:quad_2} inequalities of the type
\[ \alpha a + C_1 - \epsilon < b < \beta a + C_2 + \epsilon, \]
valid for all but finitely many relevant admissible candidates for any choice of $\varepsilon > 0$.

In most of the cases, $\alpha$ and $\beta$ will be rational. If needed, we repeat the above process and choose new values of $j$ to get better estimates, until we get estimates with $\alpha = \beta$, i.e.~we arrive at a bound of type
\[ r a + C_1 \leq s b \leq r a + C_2 \]
with some constants $r, s, C_1, C_2$, where $r$ and $s$ are integers and $C_1$ and $C_2$ might depend on $g$. That is, we have an asymptotic equality $b/a \approx r/s$ rather than two asymptotic inequalities.

In this way, we reduce each case to a finite number of possible linear relations between $a$ and $b$, that is, relations of form $r a + C = s b$ with \e{integral} coefficients.

As soon as we have such a relation, we can ask the following question: is it possible for a pair $(a,b)$ satisfying this relation to be a candidate in the sense of Definiton~\ref{def:admissible}? Solving the degree-genus formula~\eqref{eq:dg_one_p} as a quadratic equation in $d$, we see that there is an integral solution for $d$ if and only if $4(a-1)(b-1) + 8g + 1 = K^2$, where we write $K$ instead of $(2d-3)$. Plugging in the linear relation between $a$ and $b$, we can show that the equation has very few solutions.

We are now ready to prove Proposition~\ref{p:67}. Recall that $g\geq 1$ is an arbitrary genus but it is fixed during the proof. Also, by Lemma~\ref{lem:boundedpairs}, for any fixed bound on $a$, there are at most finitely many admissible candidates $(a,b)$ (with degree $d$) for a genus $g$ 1-unicuspidal curve singularity; likewise, by the degree-genus formula \eqref{eq:dg_one_p}, for any fixed bound on the degree $d$, there are at most finitely many admissible candidates $(a,b)$.

\begin{proof}[Proof of Proposition~\ref{p:67}]
\textbf{Case I}: $1 < b/a \leq 2$. Choose $j = 1$. Now $b = \Gamma(\Delta_1) = \Gamma(3)$ and by~\hyperref[eq:spec1]{($\star_1$)} we have
$b\leq d$, and Lemma~\ref{lemma:asympt} with Remark~\ref{rem:const} implies $b < a + 1 + \epsilon$ for any $\epsilon > 0$ for almost all admissible candidates. So eventually $b = a+1$ for almost all admissible pairs (that is, $b/a \approx 1^2$). Plugging into the degree-genus formula \eqref{eq:dg_one_p} we obtain
\[
4(a-1)a + 8g + 1 = K^2 \Leftrightarrow (2a - 1)^2 + 8g = K^2
\]
and this is possible for infinitely many $a$ only if $g = 0$.

\vskip 0,2 cm
\textbf{Case II}: $2 < b/a \leq 4$. Choose $j = 1$, and observe that $2a = \Gamma(\Delta_1)$. By~\hyperref[eq:spec1]{$(\star_1)$},  $2a \leq d$, so by Lemma~\ref{lemma:asympt} and Remark~\ref{rem:const} $4a - 1 - \epsilon < b$, so $b = 4a - 1$ for almost all admissible pairs ($b/a \approx 2^2$).
\[
4(a-1)(4a-2) + 8g + 1 = K^2 \Leftrightarrow (4a - 3)^2 + 8g = K^2,
\]
and this equation has infinitely many solutions only if $g = 0$.

\vskip 0,2 cm
\textbf{Case III}: $4 < b/a \leq 5$. Choose $j = 2$ and apply~\hyperref[eq:spec1]{$(\star_2)$}: we obtain $b = \Gamma(\Delta_2) \leq 2d$, that is $b < 4a + 1 + \epsilon$, so $b = 4a + 1$ for almost all admissible pairs ($b/a \approx 2^2$).
\[
4(a-1)4a + 8g + 1 = K^2 \Leftrightarrow (4a - 2)^2 + 8g - 3 = K^2,
\]
and this is not possible for infinitely many $a$ for any non-negative $g$, as $8g - 3 \neq 0$.

\vskip 0,2 cm
\textbf{Case IV}: $5 < b/a \leq 6$. We have $5a=\Gamma(\Delta_2)$. Choose $j = 2$ and apply~\hyperref[eq:spec1]{$(\star_2)$}, so that $5a \leq 2d$, hence $b/a \succeq 25/4 > 6$, so there are at most finitely many admissible pairs $(a,b)$ in this case.

\vskip 0,2 cm
\textbf{Case V}: $7 < b/a \leq 8$. This needs a longer examination.

First choose $j = 3$, and apply~\hyperref[eq:spec1]{$(\star_3)$}: this yields $8a = \Gamma(\Delta_3) \leq 3d$, from which we obtain that for any $\varepsilon > 0$, for almost all admissible candidates belonging to this case,
\begin{equation}\label{eq:casev}
 64a/9 + 1/9 - \epsilon < b.
\end{equation}

Set $j = 4$ and notice that $11a = \Gamma(\Delta_4+1)$; using~\hyperref[eq:spec2]{$(\star\star_4)$} we obtain that $b/a \preceq (11/4)^2$.

We will now compare $2b$ with $15a$.

Assume first $2b > 15a$. Then from~\hyperref[eq:spec2]{$(\star\star_7)$} we get $2b+4a = \Gamma(\Delta_7 + 1) > 7d - 2g$, hence (by Lemma~\ref{lemma:asympt}) $\frac{33}{8} + \frac{7}{8}\sqrt{17} \preceq \frac{b}{a}$ (notice that the other asymptotic inequality is irrelevant in this case). Since $\frac{11^2}{4^2} < \frac{33}{8} + \frac{7}{8}\sqrt{17}$, combining the two asymptotic inequalities we just obtained, we proved that there are only finitely many admissible pairs $(a,b)$ in this subcase.

So we can assume $2b < 15a$. Again, from~\hyperref[eq:spec2]{$(\star\star_7)$} we get $19a = \Gamma(\Delta_7 + 1) > 7d - 2g$, hence $b/a \preceq (19/7)^2$.

\begin{claim}
$2b+7a = \Gamma(45)$.
\end{claim}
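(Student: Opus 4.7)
The plan is to directly list the elements $ub + va$ of the semigroup $\Gamma = \langle a, b\rangle$ that are at most $2b + 7a$, using the active hypotheses $7a < b$ (Case V) and $2b < 15a$ (present subcase), and to confirm that there are exactly $45$ of them with $2b + 7a$ being the largest. Since $b > 7a$ gives $3b > 2b + 7a$, every element $ub + va \leq 2b + 7a$ must have $u \in \{0, 1, 2\}$.

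I would then slice by $u$ and count integers $v \geq 0$ satisfying $ub + va \leq 2b + 7a$. For $u = 0$ the condition $v \leq 7 + 2b/a$, combined with $14 < 2b/a < 15$, yields $v \in \{0, \dots, 21\}$, i.e.~$22$ values (the borderline $v = 21$ is allowed by $7a < b$ and $v = 22$ is excluded by $2b < 15a$). For $u = 1$ the condition $v \leq 7 + b/a$ combined with $7 < b/a < 15/2$ gives $v \in \{0, \dots, 14\}$, i.e.~$15$ values. For $u = 2$ the bound $v \leq 7$ gives $8$ values. The total is $22 + 15 + 8 = 45$.

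To conclude, I would verify that these $45$ representations produce distinct elements of $\Gamma$. If $u_1 b + v_1 a = u_2 b + v_2 a$ with $u_i \in \{0,1,2\}$, then coprimality of $a$ and $b$ forces $a \mid (u_1 - u_2)$; since $|u_1 - u_2| \leq 2$, this requires $u_1 = u_2$ provided $a \geq 3$, and then $v_1 = v_2$. The cases $a \in \{1, 2\}$ are vacuous here: no integer $b$ coprime to $a$ satisfies both $7a < b$ and $2b < 15a$. Since $2b + 7a$ itself appears in the enumeration (via $(u, v) = (2, 7)$) and is the maximum of the listed values, it equals $\Gamma(45)$. The only subtle point is the handling of the borderline pairs $v = 21$ and $v = 22$ in the $u = 0$ row, where the two strict inequalities $b > 7a$ and $2b < 15a$ become essential; otherwise the argument is routine lattice-point counting in a triangular region.
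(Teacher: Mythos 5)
Your proof is correct and follows essentially the same route as the paper's: both arguments slice the semigroup elements below $2b+7a$ by the coefficient $u$ of $b$ (using $b>7a$ to cap $u$ at $2$ and $2b<15a$ to cap the $v$-ranges), and both arrive at the count $22+15+8=45$. You make explicit a few things the paper leaves tacit — the boundary checks at $v=21$ and $v=14$, and the observation that the $45$ representations are distinct via coprimality of $a$ and $b$ — but these are routine and the decomposition is the same.
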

\begin{proof}
Notice that the elements preceding $2b+7a$ are exactly the following: $0, a, \dots, 21a; b, b+a, \dots, b+14a; 2b, 2b+a, \dots, 2b+6a$, as $2b+7a < 3b$ and $2b+7a < 22a$.
\end{proof}

From the claim above and~\hyperref[eq:spec1]{$(\star_8)$} we get $2b+7a = \Gamma(\Delta_8) \leq 8d$, which implies $b/a \preceq \frac{9}{2} + \frac{4}{\sqrt{2}}$.

In particular, we can assume $b/a < 22/3$, and hence $6b + 2a < 3b + 24a$. This means that the elements $44a$, $b+37a$, $2b+30a$, $3b+23a$, $4b+16a$, $5b+9a$, $6b+2a$ all precede $3b+24a$. So $ub+va < 3b+24a$ if $7u+v \leq 44$. There are $168$ semigroup elements of this form. In addition, the elements $45a < b + 38a < 2b + 31a$ also precede $3b+24a$. So $3b+24a$ is \emph{at least} the $172$nd semigroup element.
Using~\hyperref[eq:spec2]{$(\star\star_{17})$} we get
\[3b+24a \geq \Gamma(\Delta_{17} + 1) > 17d-2g \Longrightarrow b/a \preceq 64/9 = (8/3)^2.\]
Notice again that the other asymptotic inequality obtained by Lemma~\ref{lemma:asympt} is irrelevant since we are in the case $7 < b/a \leq 8$.

Coupled with~\eqref{eq:casev}, this means that $64a + 1 - \epsilon \leq 9b \leq 64a + C$ for some constant $C$. Thus, for any given positive integer $k$ and for any sufficiently large $a$,~\hyperref[eq:spec2]{$(\star\star_{6k+17})$} reads
\[3b + (16k + 24)a = \Gamma(\Delta_{6k+17} + 1) > (6k+17)d - 2g, \]
which shows that
\[9b \leq 64a + \frac{96g + 6k + 17}{6k+1} + \epsilon\]
holds for any $\varepsilon > 0$ for almost all admissible candidates belonging to this case.

The fraction on the right-hand side tends to 1 as $k \rightarrow \infty$, so we can chose a large enough $k$ (depending on the given fixed genus $g$) and a small enough $\varepsilon > 0$ such that $\frac{96g + 6k + 17}{6k+1} + \varepsilon < 2$. Combining this inequality  with~\eqref{eq:casev} applied with an $\varepsilon < 1/9$, we see that for all but finitely many admissible pairs $(a,b)$ belonging to this case the inequalities $64a < 9b < 64a + 2$ hold, that is, $64a + 1 = 9b$ for almost all admissible pairs in this case. Plugging this into the degree-genus formula~\eqref{eq:dg_one_p} we get:
\[4(a-1)\left(\frac{64a}9+\frac19-1\right) + 8g + 1 = K^2 \Leftrightarrow (16a - 9)^2 + 72g - 40 = 9K^2\]
and this has finitely many solutions $a$, as $72g - 40 \neq 0$.

\vskip 0,2 cm
\textbf{Case VI}: $8 < b/a \leq 9$. Since $b+3a = \Gamma(\Delta_4+1)$, from~\hyperref[eq:spec2]{$(\star\star_4)$} we get
$b + 3a > 4d - 2g$, and from Lemma~\ref{lemma:asympt} $9a - 6g - 2 - \epsilon < b \Rightarrow b/a \approx 9 = 3^2$.

So $9a - C < b < 9a$, and from this it is not hard to see that for any positive integer $k$ for every sufficiently large $a$ (and $b$) we have $\Gamma(\Delta_{6k+4} + 1) = (2k+1)b + 3a$. This means that using~\hyperref[eq:spec2]{$(\star\star_{6k+4})$} we get $(2k+1)b + 3a > (6k+4)d - 2g \Rightarrow 9a - \frac{6g+3k+2}{3k+1} - \epsilon \leq b$.

The lower bound tends to $9a - 1 - \epsilon$ as $k \rightarrow \infty$, so fixing a large enough $k$ depending on $g$ only and a small enough $\varepsilon > 0$, we obtain that $9a - 1 = b$ holds for almost all admissible pairs in this case.
\[4(a-1)(9a-2) + 8g + 1 = K^2 \Leftrightarrow (18a - 11)^2 + 72g - 40 = (3K)^2.\]

This is not possible for infinitely many $a$ for any non-negative $g$ as $72g - 40 \neq 0$.

\vskip 0,2 cm
\textbf{Case VII}: $9 < b/a$. Choose $j = 1$ and notice that $3a = \Gamma(\Delta_1+1)$; by~\hyperref[eq:spec2]{$(\star\star_1)$} we get
$3a > d - 2g$, hence, by Lemma~\ref{lemma:asympt}, $b/a \preceq 9 \Rightarrow b/a \approx 9 = 3^2$.

So we obtained $9a < b < 9a + C$, and from this it is not hard to see that for any positive integer $k$ for every sufficiently large $a$ (and $b$) we have $\Gamma(\Delta_{6k+4} + 1) = (18k + 12)a$, leading via~\hyperref[eq:spec2]{$(\star\star_{6k+4})$} to $(18k + 12)a > (6k+4)d - 2g \Rightarrow b \leq 9a + \frac{6g + 3k + 2}{3k+2} + \epsilon$.

The upper bound tends to $9a + 1 + \epsilon$ as $k \rightarrow \infty$, so (fixing again a large enough $k$ and a small enough $\varepsilon > 0$) we have $9a + 1 = b$ for almost all admissible pairs in this case.

\[
4(a-1)9a + 8g + 1 = K^2 \Leftrightarrow (6a - 3)^2 + 8g - 8 = K^2,
\]
which is possible for infinitely many $a$ only if $g = 1$.
This last family is excluded in~\cite{BHL}.
\vskip 0,2cm
This concludes the proof.
\end{proof}

\begin{rmk}\label{rmk:g01stprop}
Notice that we used the assumption $g \geq 1$ only in Cases I and II. If $g = 0$, from the proof above we get that almost all admissible candidates $(a,b)$ satisfying $b/a \notin (6,7)$ are either of the form $(a,b) = (l, l+1)$ for some $l \geq 2$ or of the form $(a,b) = (l, 4l-1)$ for some $l \geq 2$. These are the infinite families (a) and (b) of~\cite[Theorem 1.1]{BLMN1}.
\end{rmk}

\subsection{The proof of Proposition~\ref{p:3d}}\label{ss:3d}

Before turning to the proof of Proposition~\ref{p:3d}, we recall some basic facts about the Fibonacci numbers. The interested reader is referred to~\cite[Section 6]{Or} for further details.

Recall that we denote with $\phi$ the golden ratio, $\phi = \frac{1+\sqrt5}2$. The Fibonacci numbers are defined by recurrence as $F_0 = 0$, $F_1 = 1$, $F_{n+1} = F_n + F_{n-1}$; more explicitly, one can write
\[F_n = \frac{\phi^n - (-\phi)^{-n}}{\sqrt5}.\]

We collect in the following proposition some useful identities about the Fibonacci sequence, easily proved either by induction or by substituting the explicit formula above.

\begin{prop}\label{p:fibonacci}
The following identities hold for any integers $k\ge 2$ and any $l\ge 1$:
\begin{align}
& \gcd(F_{2l-1},F_{2l+1}) = \gcd(F_{2l-1},F_{2l+3}) = 1\label{eq:fib0}\\
& F_k^2 - F_{k-2}F_{k+2} = (-1)^k\label{eq:fib1}\\
& F_{k-2} + F_{k+2} = 3F_k\label{eq:fib2}\\
& F_{2l+3}^2F_{2l-1}^2 - F_{2l+1}^2(F_{2l+1}^2+2) = 1\label{eq:fib3}\\
& F_{2l+3}^2 + F_{2l+1}^2 - 3F_{2l+1}F_{2l+3} = -1\label{eq:fib4}\\
& \lim_{i\rightarrow\infty} F_{2i-1}^2\left(\phi^4 - \frac{F_{2i+1}^2}{F_{2i-1}^2}\right) = \frac{2}{5}\left(\phi^4-1\right)\label{eq:lim1}\\
&  \lim_{i\rightarrow\infty} F_{2i+1}^2\left(\frac{F_{2i-1}^2}{F_{2i+1}^2} - \phi^{-4}\right) = \frac{2}{5}\left(1-\phi^{-4}\right).\label{eq:lim2}
\end{align}
\end{prop}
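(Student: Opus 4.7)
The plan is to reduce every identity in the proposition to the Fibonacci recurrence, Cassini's identity $F_{n-1}F_{n+1} - F_n^2 = (-1)^n$, or Binet's formula $F_n = (\phi^n - (-\phi)^{-n})/\sqrt5$.

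First I would clear the elementary items. For~\eqref{eq:fib0}, I invoke the classical identity $\gcd(F_m, F_n) = F_{\gcd(m,n)}$: since $2l-1$ is odd, both $\gcd(2l-1, 2l+1) = \gcd(2l-1, 2)$ and $\gcd(2l-1, 2l+3) = \gcd(2l-1, 4)$ equal $1$, so the Fibonacci gcds are $F_1 = 1$. For~\eqref{eq:fib2}, expanding $F_{k+2} = F_{k+1} + F_k$ and $F_{k-2} = F_k - F_{k-1}$ and using $F_{k+1} - F_{k-1} = F_k$ immediately gives $F_{k-2} + F_{k+2} = 2F_k + F_k = 3F_k$.

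Identity~\eqref{eq:fib1} is a near-Cassini statement: a direct expansion gives $F_{k-2}F_{k+2} = (F_k - F_{k-1})(F_k + F_{k+1}) = 2F_k^2 - F_{k-1}F_{k+1}$, so $F_k^2 - F_{k-2}F_{k+2} = F_{k-1}F_{k+1} - F_k^2 = (-1)^k$ by Cassini. From~\eqref{eq:fib1} both~\eqref{eq:fib3} and~\eqref{eq:fib4} drop out essentially for free. Specialising~\eqref{eq:fib1} to $k = 2l+1$ yields $F_{2l-1}F_{2l+3} = F_{2l+1}^2 + 1$, and squaring this relation produces~\eqref{eq:fib3}. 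For~\eqref{eq:fib4}, I would use~\eqref{eq:fib2} at index $2l+3$ to rewrite $3F_{2l+1}F_{2l+3} = F_{2l+1}^2 + F_{2l+1}F_{2l+5}$, which collapses the left-hand side of~\eqref{eq:fib4} to $F_{2l+3}^2 - F_{2l+1}F_{2l+5}$, equal to $-1$ by~\eqref{eq:fib1} at $k = 2l+3$.

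For the two limits~\eqref{eq:lim1} and~\eqref{eq:lim2}, I would plug in Binet's formula. Since $(-\phi)^{-n} = (-1)^n \phi^{-n}$ and $2l\pm 1$ is odd, one has $\sqrt5\,F_{2l\pm 1} = \phi^{2l\pm 1} + \phi^{-(2l\pm 1)}$, hence $5F_{2l\pm 1}^2 = \phi^{4l\pm 2} + 2 + \phi^{-(4l\pm 2)}$. A direct substitution then gives
\[
5\bigl(F_{2i-1}^2\phi^4 - F_{2i+1}^2\bigr) \;=\; 2\phi^4 - 2 + \phi^{-(4i-6)} - \phi^{-(4i+2)},
\]
and the last two terms vanish as $i \to \infty$, proving~\eqref{eq:lim1}; dividing this computation through by $5F_{2i+1}^2\phi^4$ and letting $i\to\infty$ produces~\eqref{eq:lim2}. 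The only genuine obstacle here is bookkeeping: tracking the sign of $(-\phi)^{-n}$ for odd $n$, and applying~\eqref{eq:fib1} at the correct index in each of the derived identities.
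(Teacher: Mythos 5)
Your proof is correct and proceeds exactly as the paper suggests (the paper gives no proof, only stating that the identities follow by induction or from Binet's formula): you reduce everything to Cassini's identity, the recurrence, and Binet. The only blemish is in the last step: to pass from~\eqref{eq:lim1} to~\eqref{eq:lim2} you should divide the displayed computation by $5\phi^4$ (not $5F_{2i+1}^2\phi^4$), since $F_{2i+1}^2\bigl(F_{2i-1}^2/F_{2i+1}^2 - \phi^{-4}\bigr) = \phi^{-4}\bigl(\phi^4F_{2i-1}^2 - F_{2i+1}^2\bigr)$; the intended manipulation is clear, so this is only a typographical slip.
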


In this subsection, $(a,b)$ will always denote a pair such that $6<b/a<7$. We want to prove that an admissible pair $(a, b)$ and the corresponding degree $d$ are tied by the relation $a+b = 3d$, with at most finitely many exceptions.

In the course of the proof, we will state several lemmas, systematically postponing their proof to Subsection~\ref{ss:technical}.

\begin{proof}[Proof of Proposition~\ref{p:3d}]
Using \hyperref[eq:spec1]{($\star_3$)} and the fact that $a+b=\Gamma(10)$, we get $a+b = \Gamma(\Delta_3) \leq 3d$, which in turn by Lemma~\ref{lemma:asympt} implies $\frac{b}{a} \preceq \phi^4$.

More precisely (see Remark~\ref{rem:const}), the relevant asymptote of the hyperbola $\gamma_0$ determined by $a+b=3d$ has equation $b = \phi^4a$.

In particular, for any $\epsilon>0$, for almost all admissible pairs we have
\begin{equation}\label{eq:caseviii}
b\le\phi^4a+\epsilon.
\end{equation}
One easily verifies that for every $D\ge 1$ the hyperbola $\gamma_D$ determined (via the degree-genus formula~\eqref{eq:dg_one_p}) by $a+b=3d-D$ lies below $\gamma_0$ in the relevant region $\{2 \leq a < b\}$.

As $a+b \leq 3d$, we only need show that only finitely many admissible pairs satisfy $a+b<3d$. It will turn out that the line $b = \phi^4 a$ plays a crucial role in the proof. We divide the region below it into infinitely many \emph{sectors} cut out by lines of slope $F_{2l+1}^2/F_{2l-1}^2$ ($l \geq 2$). Notice that the sequence $\left(F_{2l+1}^2/F_{2l-1}^2\right)_l$ is increasing in $l$ and tends to $\phi^4$ as $l\to\infty$.

\begin{defn}
The $l$\emph{-th sector} $S_l$ (for $l \geq 2$) is the open region in the positive quadrant bounded by lines $F_{2l+1}^2a = F_{2l-1}^2b$ and $F_{2l+3}^2a = F_{2l+1}^2b$. That is,
\[S_l = \left\{ (a,b) : F_{2l+1}^2a/F_{2l-1}^2 < b < F_{2l+3}^2a/F_{2l+1}^2 \right\}.\]
The $l$\emph{-th punctured sector} $S^*_l$ is defined as $S^*_l = S_l\setminus\{(F_{2l-1}, F_{2l+3})\}$.
\end{defn}

The following lemma takes care of the region below all sectors.

\begin{lemma}\label{lem:lowersec}
There are at most finitely many admissible candidates such that $b < 25a/4 = F_5^2a/F_2^2$.
\end{lemma}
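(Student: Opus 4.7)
The plan is to apply the single inequality \hyperref[eq:spec1]{$(\star_4)$} to bound $b$ from below so sharply that, combined with the hypothesis $b<25a/4$, only one potential infinite family of pairs $(a,b)$ survives; a direct analysis of the degree-genus formula then eliminates all but finitely many members of that family.

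First I would verify that, in the range $6<b/a<25/4$, the fifteen smallest elements of $\Gamma=\langle a,b\rangle$ in increasing order end with $10a$, so that $\Gamma(\Delta_4)=10a$. This is a short case check: the eleven multiples $0,a,\dots,10a$ together with $b,b+a,b+2a,b+3a$ (all strictly less than $10a$, while $b+4a>10a$ and $2b>12a$). Then \hyperref[eq:spec1]{$(\star_4)$} yields $10a\le 4d$, and substituting $d\ge 5a/2$ into~\eqref{eq:dg_one_p} and solving for $b$ gives an explicit lower bound whose asymptotic form is
\[
b\ \ge\ \frac{25a}{4}-\frac14+O(1/a),
\]
where the implicit constant depends only on $g$. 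In particular $b>25a/4-1/2$ once $a$ is large enough (depending on $g$).

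Combined with the hypothesis $b<25a/4$, this forces $b\in(25a/4-1/2,\,25a/4)$. The key step is to analyse the four residue classes of $a$ modulo $4$: exactly one of them, $a\equiv 1\pmod 4$, admits an integer in this interval, and that integer is $b=(25a-1)/4$. Writing $a=4k+1$ and $b=25k+6$, the coprimality $\gcd(a,b)=1$ follows at once from the Euclidean algorithm.

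Finally, for this unique remaining family the degree-genus formula~\eqref{eq:dg_one_p} simplifies to $(2d-3)^2=(20k+2)^2+8g-3$. Since $g\ge 1$, the right-hand side is a fixed perfect square shifted by a fixed non-zero odd integer, and the Diophantine equation $X^2-Y^2=8g-3$ has only finitely many positive integer solutions (one per factorisation of $8g-3$ into two odd factors). Hence only finitely many $k$ give an integral $d$, proving the lemma. The main obstacle is the residue-class analysis: it hinges on the constant $-1/4$ in the asymptotic lower bound being sharp enough to exclude three of the four residues of $a\bmod 4$, which would fail had the constant been below $-1/2$.
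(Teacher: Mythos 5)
Your proof is correct, and its skeleton coincides with the paper's: use a \hyperref[eq:spec1]{$(\star_j)$} bound to obtain $5a\le 2d$, combine with the hypothesis $b<25a/4$ and the degree--genus asymptote to pin the line $4b=25a-1$ (equivalently $a=4k+1$, $b=25k+6$), and then finish. Two remarks on the differences.

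First, you invoke $(\star_4)$, which needs the bookkeeping $\Gamma(\Delta_4)=\Gamma(15)=10a$; the paper uses $(\star_2)$, where $\Gamma(\Delta_2)=\Gamma(6)=5a$ is immediate from $b>5a$. Since $10a\le 4d$ is literally $5a\le 2d$, this buys nothing and costs a longer semigroup enumeration. Note also that the passage to the asymptotic lower bound $b\ge 25a/4-1/4+O_g(1/a)$ is exactly what the paper packages in Lemma~\ref{lemma:asympt} and Remark~\ref{rem:const}; your hands-on polynomial division is fine, but it is worth being explicit (as you are) that ``$a$ large enough'' depends on $g$, so the finitely many small-$a$ pairs must be swept up separately by Lemma~\ref{lem:boundedpairs}.

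Second, and this is the genuine variation: once $4b=25a-1$ is established, you pass to the degree--genus Diophantine $(2d-3)^2-(20k+2)^2=8g-3$ and finish by factoring the fixed odd integer $8g-3$. This is the same pattern the paper applies in Cases~I--VII of Proposition~\ref{p:67}, so it is perfectly in keeping with the toolbox. The paper's own proof of this lemma instead uses a parity shortcut: on the line $4b=25a-1$ one has $a\equiv 1\pmod 4$, hence $a$ is odd, so $5a=2d$ is impossible and the $(\star_2)$ inequality strengthens to $5a\le 2d-1$; substituting $d\ge(5a+1)/2$ into the degree--genus formula along $4b=25a-1$ collapses to the explicit linear bound $a\le (4g+1)/5$. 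The parity route yields a concrete bound on $a$ without having to count divisors of $8g-3$; yours is the more mechanical but equally valid application of the recurring Diophantine-factoring step. Both proofs are correct.
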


\begin{figure}
\includegraphics[scale=0.3]{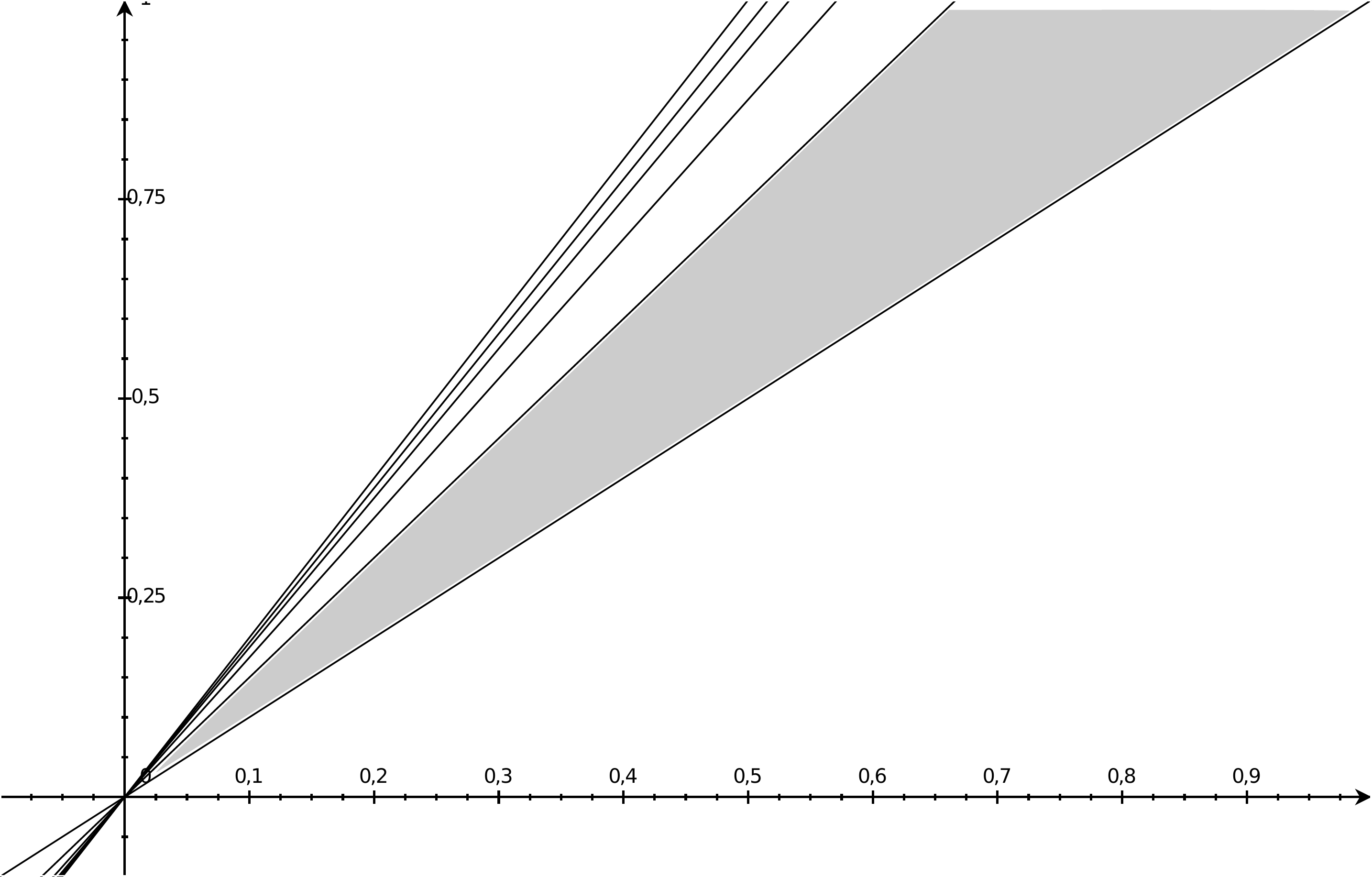}
\caption{A schematic picture of the sectors. The lowest sector is shaded.}
\end{figure}

The next lemma ensures that almost all admissible candidates below the line $b=\phi^4a$ live in the sectors $S_l$, $l\geq 2$ if $g \geq 1$. In fact, we prove more, namely that almost all admissible candidates below that line are in the \e{punctured} sectors.

\begin{lemma}\label{lem:exclg0}
For $g \geq 1$, there are only finitely many candidates of the form $(a, b) = (F_{2l-1}, F_{2l+3})$ and $(a, b) = (F_{2l-1}^2, F_{2l+1}^2)$, ($l \geq 2$).
\end{lemma}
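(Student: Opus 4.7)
The plan is to reduce the condition of being a candidate to an elementary Diophantine condition and then use the identities of Proposition~\ref{p:fibonacci} to rule out all but finitely many $l$ in each family. Viewing the degree-genus formula $(d-1)(d-2)=(a-1)(b-1)+2g$ as a quadratic in $d$, an integer $d$ exists precisely when $K^2=4(a-1)(b-1)+8g+1$ for some odd positive integer $K=2d-3$. Coprimality of the two entries in each family is immediate from \eqref{eq:fib0}, so the lemma reduces to showing that this expression is a perfect square for only finitely many indices $l$.

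For the first family $(a,b)=(F_{2l-1},F_{2l+3})$: identity \eqref{eq:fib1} with $k=2l+1$ gives $F_{2l-1}F_{2l+3}=F_{2l+1}^2+1$, and \eqref{eq:fib2} gives $F_{2l-1}+F_{2l+3}=3F_{2l+1}$. A direct expansion then completes the square:
\[
4(F_{2l-1}-1)(F_{2l+3}-1)+8g+1=(2F_{2l+1}-3)^2+8g.
\]
For the second family $(a,b)=(F_{2l-1}^2,F_{2l+1}^2)$, the same method applies using the ancillary identities $F_{2l-1}F_{2l+1}=F_{2l}^2+1$ and $F_{2l-1}^2+F_{2l+1}^2=3F_{2l}^2+2$, both derivable from \eqref{eq:fib1}, \eqref{eq:fib2} and the recurrence; one obtains
\[
4(F_{2l-1}^2-1)(F_{2l+1}^2-1)+8g+1=(2F_{2l}^2-1)^2+8g.
\]
In each case we are left with an equation of the form $K^2-M(l)^2=8g$ with $M(l)$ an odd positive integer (for $l\ge2$). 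Since $g\ge1$, we have $K>M(l)$, and both are odd, so $K=M(l)+2m$ for some integer $m\ge1$; substituting gives $m(M(l)+m)=2g$, which forces $M(l)\le 2g$. This bounds $F_{2l+1}$ (respectively $F_{2l}$) in terms of $g$, leaving only finitely many admissible $l$.

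The main obstacle is guessing the correct completed square $M(l)^2$ in each family; once it is identified, the conclusion is an elementary difference-of-squares argument. It is worth noting that the argument breaks down precisely at $g=0$, where $K^2=M(l)^2$ is solved identically for every $l$, and the two families become infinite. This is consistent with Remark~\ref{rmk:g01stprop} and the classification in \cite{BLMN1}, and also explains why the hypothesis $g\ge1$ is essential here.
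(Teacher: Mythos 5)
Your proof is correct and follows essentially the same route as the paper: reduce to the integrality criterion $4(a-1)(b-1)+8g+1=(2d-3)^2$, use the Fibonacci identities to rewrite the left side as $(2d_0-3)^2+8g$ where $d_0$ is the genus-$0$ degree ($F_{2l+1}$ or $F_{2l-1}F_{2l+1}$), and observe that $(2d-3)^2-(2d_0-3)^2=8g$ has finitely many solutions when $g\neq0$. Your explicit difference-of-squares bound $M(l)\le 2g$ is a slight quantitative sharpening of the paper's terse ``finitely many solutions,'' and your ancillary identity $F_{2l-1}F_{2l+1}=F_{2l}^2+1$ is Cassini's identity rather than literally \eqref{eq:fib1}, but neither point affects correctness.
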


For admissible candidates inside the sectors, we prove the following.

\begin{lemma}\label{lem:bound}
For any admissible candidate $(a, b) \in S_l$ at least one of the following upper bounds hold:
\begin{align}
a &\leq 2(2g-1)F_{2l+1}+2\label{eq:ub1}\\
b &\leq 2(2g-1)\frac{F_{2l+1}^2}{F_{2l-1}} + 2\label{eq:ub2}.
\end{align}
In particular, there are finitely many admissible candidates in each sector.
\end{lemma}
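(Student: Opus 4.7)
The plan is to adapt the iterative refinement strategy used throughout Proposition~\ref{p:67}: combine the two families of inequalities $(\star_j)$ and $(\star\star_j)$ for suitably chosen $j$ depending on the sector index $l$, with the degree-genus formula and the Fibonacci identities of Proposition~\ref{p:fibonacci}, to obtain explicit constraints on the pair $(a,b)$. Throughout I would fix an admissible candidate $(a,b)\in S_l$ and introduce the two nonnegative quantities $t := F_{2l-1}^2 b - F_{2l+1}^2 a$ (a positive integer, by the lower sector boundary) and $D := 3d - (a+b)$ (nonnegative, by $(\star_3)$ applied to $\Gamma(10)=a+b$). From the degree-genus formula one derives the Pell-like identity
\[
(b-a)^2 \;=\; 5d^2 - 6dD + (D+2)^2 + 8(g-1),
\]
so that controlling $D$ will immediately control $|b-a|$ once $d$ is pinned down.

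The core step is to apply $(\star_j)$ or $(\star\star_j)$ to a semigroup element of the form $u b + v a$ where the coefficients $u,v$ are Fibonacci numbers adapted to the slope boundaries of $S_l$. A natural candidate, suggested by the identities $F_{2l-1} + F_{2l+3} = 3F_{2l+1}$ and $F_{2l-1}F_{2l+3} = F_{2l+1}^2 + 1$, is $N = F_{2l-1} b + F_{2l+1} a$: in $S_l$, $F_{2l-1}b$ and $F_{2l+3}a$ differ by a bounded amount (of order $a/F_{2l-1}$), so $N$ is close to $F_{2l+1}(a+b)$, and the rank of $N$ in $\langle a,b\rangle$ can be enumerated combinatorially using only the slope inequalities defining $S_l$. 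I expect the resulting rank to be $\Delta_{j}+1$ or $\Delta_j$ for $j$ of the form $c F_{2l+1} + O(1)$, with $c$ close to $3$.

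Feeding the rank computation into $(\star\star_j)$ produces an inequality of shape $N + 2g > j d$; combined with $a+b \leq 3d$ and the explicit expression of $N$ in terms of $a,b,t$, one obtains a bound
\[
D \;\leq\; \kappa_l (2g-1) + \mu_l,
\]
for explicit Fibonacci-involving constants $\kappa_l,\mu_l$. Substituting this bound into the Pell-like identity above and using the sector inequalities $F_{2l-1}^2 b - F_{2l+1}^2 a \geq 1$ and $F_{2l+3}^2 a - F_{2l+1}^2 b \geq 1$ should force either $a$ or $b$ to be at most the claimed bounds; the ``$+2$'' in the lemma accounts for the rounding effects entering the rank computation. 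A case split between the lower half $b/a < F_{2l+3}/F_{2l-1}$ and the upper half $b/a > F_{2l+3}/F_{2l-1}$ of $S_l$ is likely necessary, and produces the two alternative bounds \eqref{eq:ub1} and \eqref{eq:ub2} respectively.

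The main obstacle is the combinatorial step: precisely computing the rank of $N$ (or a small variant) inside the semigroup $\langle a,b\rangle$ requires listing the first $\sim F_{2l+1}^2$ elements, whose order depends sensitively on the exact position of $b/a$ within $S_l$; verifying that the Fibonacci identities reproduce exactly the coefficients $2(2g-1)F_{2l+1}$ and $2(2g-1)F_{2l+1}^2/F_{2l-1}$ is the delicate bookkeeping part. Once this rank is identified, the rest of the argument is a short algebraic manipulation analogous to Cases V-VII of Proposition~\ref{p:67}, where asymptotic linear bounds are sharpened to exact linear relations by letting an auxiliary integer parameter grow.
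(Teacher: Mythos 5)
Your outline correctly identifies the structural ingredients --- a Fibonacci-weighted semigroup element, a rank count, one of the inequalities $(\star_j)$/$(\star\star_j)$, and the degree-genus formula --- and the Pell-type identity $(b-a)^2 = 5d^2 - 6dD + (D+2)^2 + 8(g-1)$ is indeed correct. But the proposal leaves a genuine gap exactly where you flag it yourself: the rank of $N = F_{2l-1}b + F_{2l+1}a$ in $\langle a,b\rangle$ is never computed, only conjectured to be ``$\Delta_j+1$ or $\Delta_j$ for $j$ of the form $cF_{2l+1}+O(1)$''. Without that number, there is nothing to feed into $(\star\star_j)$, and nothing downstream --- the bound on $D$, the substitution, the case split --- can be carried out. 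This is not a bookkeeping afterthought; it is the content of the lemma.

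There is also a directional concern. To apply $(\star\star_j)$ to $N$ you need a \emph{lower} bound on its rank, i.e.\ you must show that sufficiently many semigroup elements $ub+va$ precede $N$; that count is a lattice-point number in a triangle whose slope $b/a$ varies continuously across the sector, and it is not clear such a lower bound is uniform. The paper avoids this by choosing the two elements $F_{2l+3}a$ and $F_{2l-1}b$ separately and applying $(\star_{F_{2l+1}})$, which only requires an \emph{upper} bound on the number of possible predecessors: one enumerates the pairs $(u,v)\ge 0$ with $uF_{2l+1}^2+vF_{2l-1}^2 \le F_{2l-1}^2F_{2l+3}$ and shows, using \eqref{eq:fib1} and \eqref{eq:fib2}, that there are exactly $\Delta_{F_{2l+1}}+1$ of them. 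That yields the dichotomy ``$F_{2l+3}a \le F_{2l+1}d$ or $F_{2l-1}b \le F_{2l+1}d$'' (Lemma~\ref{lem:key}). Then, rather than passing through a bound on $D$, one substitutes $r=F_{2l+1}d-F_{2l+3}a$ and $s=F_{2l+3}^2a-F_{2l+1}^2b$ into the degree-genus formula to get the explicit rational expression \eqref{eq:aintersec} for $a$, and the two subcases $r=0$ (forcing $F_{2l+1}\mid s$) and $r\ge 1$ give \eqref{eq:ub1}; the symmetric substitution gives \eqref{eq:ub2}. So the two alternative bounds come from the two horns of the dichotomy, not (as you expect) from splitting the sector at $b/a=F_{2l+3}/F_{2l-1}$, and the quantity that gets bounded directly is $a$ (or $b$), not $D$. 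Until you actually carry out a rank computation and show it produces a usable inequality, this remains a plan rather than a proof.
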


Recall that at the beginning of the proof we already obtained $a+b\leq 3d$ for almost all admissible candidates, so we want to prove that there are in fact only finitely many admissible candidates such that $a + b\le 3d-1$.
Notice that all pairs $(a, b)$ satisfying $a + b \leq 3d - 1$ and the degree-genus formula~\eqref{eq:dg_one_p} lie \emph{on or below} $\gamma_1$, which has an asymptote (in the relevant region $\{0 < a < b\}$) with equation $b = \phi^4a - 2\phi^2/\sqrt5$.

To finish the proof, we need one final lemma.

\begin{lemma}\label{lem:limzero}
There is a decreasing, infinitesimal sequence $(C_l)_{l\geq2}$ of real numbers (which further depends on $g$) such that for every $l\ge2$ and for every admissible candidate $(a,b)\in S_l$:
\[ 0 \le \phi^4a - b \le C_l. \]
%For each $l\geq2$ there is a constant $C_l$ (which further depends on $g$) such that for every admissible candidate $(a,b)\in S_l$:
%\[ 0 \le \phi^4a - b \le C_l. \]
%Moreover, $(C_l)_l$ is a decreasing sequence converging to $0$.
\end{lemma}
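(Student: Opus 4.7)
My plan is to split the two inequalities: the lower bound $0\le \phi^4 a - b$ is essentially free from the definition of the sector, and only the upper bound requires real work, combining the other defining inequality of $S_l$ with Lemma~\ref{lem:bound} and the limit \eqref{eq:lim1}.

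For the lower bound, I would first observe that Binet's formula $F_n=(\phi^n-(-\phi)^{-n})/\sqrt 5$ gives, for $l\ge 2$,
\[
\frac{F_{2l+3}}{F_{2l+1}} \;=\; \frac{\phi^{2l+3}+\phi^{-(2l+3)}}{\phi^{2l+1}+\phi^{-(2l+1)}}\;<\;\phi^2,
\]
the inequality being equivalent to $\phi^{-(2l+3)}<\phi^{-(2l-1)}$. Hence every $(a,b)\in S_l$ satisfies $b < (F_{2l+3}/F_{2l+1})^2\, a < \phi^4 a$.

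For the upper bound, the other defining inequality of $S_l$ gives $b>F_{2l+1}^2 a/F_{2l-1}^2$, so
\[
\phi^4 a - b \;<\; a\left(\phi^4-\frac{F_{2l+1}^2}{F_{2l-1}^2}\right).
\]
By \eqref{eq:lim1}, the coefficient of $a$ on the right is $\frac{2(\phi^4-1)}{5F_{2l-1}^2}(1+o(1))$, so it suffices to control $a$ in the two cases of Lemma~\ref{lem:bound}. In case \eqref{eq:ub1}, $a\le 2(2g-1)F_{2l+1}+2$, yielding a bound of order $F_{2l+1}/F_{2l-1}^2=O(\phi^{-(2l-3)})$. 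In case \eqref{eq:ub2}, the sector inequality $b>F_{2l+1}^2 a/F_{2l-1}^2$ transfers the bound on $b$ to $a\le 2(2g-1)F_{2l-1}+2$, giving a bound of order $F_{2l-1}/F_{2l-1}^2=O(\phi^{-(2l-1)})$. Either way, I obtain an explicit (genus-dependent) upper bound $\widetilde C_l$ on $\phi^4 a-b$ with $\widetilde C_l\to 0$ as $l\to\infty$.

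To finish, I would set $C_l=\sup_{k\ge l}\widetilde C_k$, which is automatically decreasing and still infinitesimal. The only mild subtlety is bookkeeping, namely that the implicit constants may depend on $g$; but since $g$ is fixed throughout Section~\ref{s:acc}, there is no real obstacle. There is no deep difficulty: the proof is entirely an asymptotic computation, whose whole point is to exploit that the sector boundaries $F_{2l+1}^2/F_{2l-1}^2$ and $F_{2l+3}^2/F_{2l+1}^2$ both converge to $\phi^4$ at rate $1/F_{2l-1}^2$, while Lemma~\ref{lem:bound} forces $a$ (or $b$) to grow at most linearly in $F_{2l+1}$ (respectively $F_{2l+1}^2/F_{2l-1}$) inside $S_l$.
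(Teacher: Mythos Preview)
Your proposal is correct and follows essentially the same approach as the paper: use the sector bounds to reduce $\phi^4 a - b$ to $a$ (or $b$) times a quantity tending to zero, then invoke Lemma~\ref{lem:bound} to bound $a$ (or $b$). The only cosmetic difference is that in case~\eqref{eq:ub2} the paper works directly with the bound on $b$, writing $\phi^4 a - b \le \phi^4 b\bigl(F_{2l-1}^2/F_{2l+1}^2 - \phi^{-4}\bigr)$ and applying~\eqref{eq:lim2}, whereas you transfer the bound on $b$ to a bound on $a$ via the sector inequality and reuse~\eqref{eq:lim1}; both routes give the same $O(\phi^{-2l})$ decay.
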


Now we can show that almost all admissible candidates lie \emph{above} the line $b = \phi^4a - 1$. This is obviously true for pairs such that $b \geq \phi^4a$. To handle pairs below the line $b = \phi^4a$, first apply Lemmas ~\ref{lem:lowersec} and~\ref{lem:exclg0} and conclude that almost all admissible candidates in this case lie in the union of punctured sectors $S^*_l$ for $l \geq 2$. Now choose $l_0$ such that $C_{l_0} < 1 < (2\phi^2)/\sqrt5$.  From Lemma~\ref{lem:bound} above, we know that there are only finitely many admissible candidates in sectors $S_l$ with $l \leq l_0$.

So almost all admissible candidates are in sectors $S_l$ with $l > l_0$. For these, by Lemma~\ref{lem:limzero}, the inequality $\phi^4a - b < C_{l_0} < 1$ holds. Notice that for $g \geq 1$ the relevant branch (i.e.~the branch falling into the sector $\{0 < a < b\}$) of the hyperbola $\gamma_1$ lies \emph{above} its asymptote, and recall that the latter has equation $b = \phi^4a - 2\phi^2/\sqrt5$.

Denote by $(a_1, b_1)$ the intersection point of
$\gamma_1$ and the line $b = \phi^4a - 1$ in the positive quadrant, i.e.~$0 < a_1 < b_1$. One can easily compute that $a_1 = 2g$.

%Denote by $(a_{D,C}, b_{D,C})$ the intersection point (if it exists) of
%$\gamma_D$ and the line $b = \phi^4a - C$. One can easily compute that for $C, D > 0$ the intersection point $(a_{D,C}, b_{D,C})$ exists as soon as $C < 2\phi^2D/\sqrt5$, in particular, for $C = 1, D = 1$ it exists, and $a_{1,1} = 2g$.

\begin{figure}
\includegraphics[scale=.3]{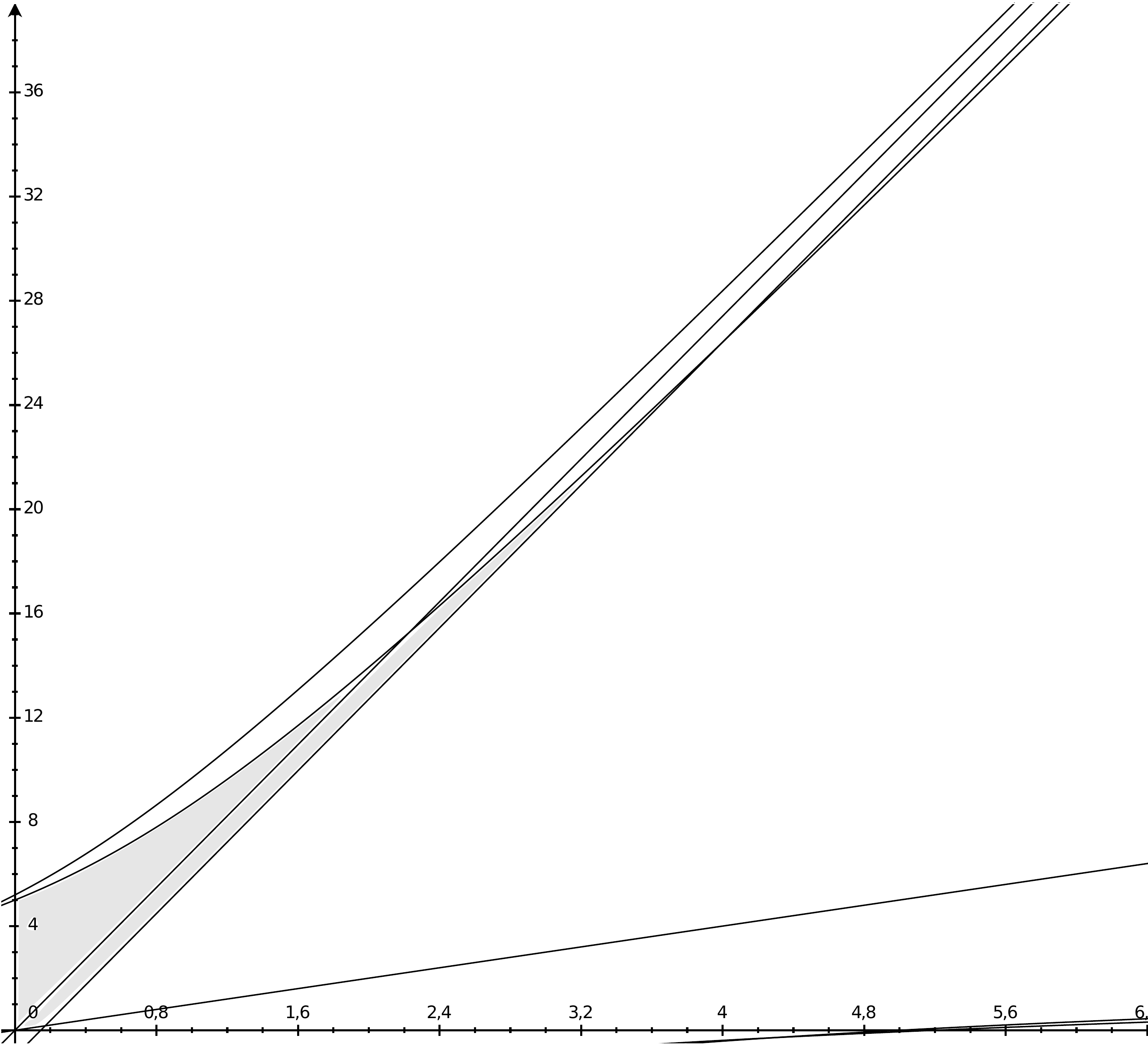}
\caption{The hyperbolae $\gamma_0$ and $\gamma_1$ in the $(a,b)$-plane, together with the lines $b=\phi^4a$, $b=\phi^4a-1$ and $b=a$}\label{f:gammas}
\end{figure}

We proved that almost all admissible candidates with $a + b < 3d$ lie \emph{on or below} $\gamma_1$ and \emph{above} the line $b = \phi^4a-1$ (by the argument involving Lemma~\ref{lem:limzero}, Lemma~\ref{lem:bound} and a suitable choice of $l_0$), and as these two intersect, almost all admissible candidates not satisfying~\eqref{eq:pell} lie in a bounded region $a \leq a_1 = 2g$ and $b \leq b_1$.

Therefore, almost all admissible candidates satisfy $a+b = 3d$.
\end{proof}

\begin{rmk}\label{rmk:g02ndprop}
By some small modifications of the argument, we are able to recover (up to finitely many candidates in a bounded region, which after working out a concrete bound, can be checked one by one by computer) the classification result of \cite[Theorem 1.1]{BLMN1} for $g=0$ as well. This is particularly interesting because our method \e{uses the semigroup distribution property of Remark~\ref{r:semigroup_property} only} (which, for $g=0$ is a result of \cite{BL}). After finishing this manuscript, we learned that Tiankai Liu in his PhD thesis \cite[Theorem 2.3]{Liu} among other results also reproved this classification based on the semigroup distribution property only.

So assume for the moment $g=0$. Recall that according to Remark~\ref{rmk:g01stprop} we can deal with candidates $(a,b)$ such that $b/a \notin (6,7)$. For the case $b/a \in (6,7)$, one can make the following changes to the proof above:
\begin{itemize}
\item The pairs listed in Lemma~\ref{lem:exclg0} are admissible candidates; in fact, 1-unicuspidal rational curves with those singularities exist: these are families (c) and (d) of~\cite[Theorem 1.1]{BLMN1}.

\item The asymptote of $\gamma_0$ still has equation $b = \phi^4a$, but $\gamma_0$ now lies \emph{below} it. So it is enough to deal with pairs below this line. This region is divided into sectors $S_l$. 
%The asymptote of $\gamma_1$ also remains $b = \phi^4a - 2\phi^2/\sqrt5$, but again, the relevant branch of $\gamma_1$ will lie \emph{below} it.

\item From the proof of Lemma~\ref{lem:bound}, we obtain that there are no admissible candidates in the \emph{punctured sectors} $S_l^{\ast}$. So, in fact, almost all admissible candidates are those already enumerated in Lemma~\ref{lem:exclg0}. 

%\item Lemma~\ref{lem:limzero} still holds. By the same choice of $l_0$, we obtain that almost all admissible pairs \emph{besides those enumerated in Lemma~\ref{lem:exclg0}} must lie on $\gamma_0$. A solution of a Pell equation (cf.~\cite[Section 5]{BLMN1}) shows that the only candidates on $\gamma_0$ are already among those enumerated in Lemma~\ref{lem:exclg0}.
\end{itemize}

In this way, we obtain an almost complete classification in the rational case: families (c) and (d) from~\cite[Theorem 1.1]{BLMN1} are obtained above, and families (a) and (b) were obtained in Remark~\ref{rmk:g01stprop}. We also see that almost all admissible candidates for $g=0$ belong to one of these families.
\end{rmk}

\subsection{Technical proofs}\label{ss:technical}
In this subsection we deal with all the lemmas stated above. The following claim will be useful in the proof of several lemmas.

\begin{claim}\label{c:smallestfrac}
Assume that we have three reduced fractions
\[ 0 < \frac{m_1}{n_1} < \frac{b}{a} < \frac{m_2}{n_2}\]
and set $P = m_2 n_1 - m_1 n_2$.
Then
\[ b \geq \frac{m_1 + m_2}{P} \textrm{\ and \ } a \geq \frac{n_1 + n_2}{P}. \]
\end{claim}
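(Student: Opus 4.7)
The plan is to exploit the integrality of $a$, $b$, $m_i$, $n_i$ to turn the two strict inequalities into differences that are at least $1$, and then take appropriate linear combinations. Specifically, from $m_1/n_1 < b/a$ we get $n_1 b - m_1 a > 0$, and since the left-hand side is an integer, actually $n_1 b - m_1 a \ge 1$. Likewise, from $b/a < m_2/n_2$ we obtain $m_2 a - n_2 b \ge 1$.

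To get the lower bound on $a$, I would multiply the first inequality by $n_2$ and the second by $n_1$ and add:
\[
n_2(n_1 b - m_1 a) + n_1(m_2 a - n_2 b) \;\ge\; n_1 + n_2,
\]
which telescopes to $(m_2 n_1 - m_1 n_2)\,a \ge n_1 + n_2$, i.e.\ $Pa \ge n_1+n_2$. Symmetrically, multiplying the first inequality by $m_2$ and the second by $m_1$ and adding gives $(m_2 n_1 - m_1 n_2)\,b \ge m_1 + m_2$, i.e.\ $Pb \ge m_1 + m_2$. Dividing through by $P$ (which is positive since $m_1/n_1 < m_2/n_2$) yields both desired inequalities.

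There is essentially no obstacle here beyond ensuring the quantities are all integers so that the ``$>0$ implies $\ge 1$'' step is legitimate; the hypothesis that the fractions $m_i/n_i$ are reduced is not strictly needed for the inequalities themselves, but it is the natural setting in which the bound $P = m_2 n_1 - m_1 n_2$ arises sharply (as the determinant of the $2\times 2$ matrix of numerators and denominators). The positivity of $P$ follows immediately from $m_1/n_1 < m_2/n_2$ together with $n_1, n_2 > 0$, which is implicit in the hypothesis that the first fraction is positive and both denominators are positive integers.
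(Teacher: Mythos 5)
Your proof is correct and is essentially the same as the paper's: the integers $n_1 b - m_1 a$ and $m_2 a - n_2 b$ that you bound below by $1$ are exactly the numerators $\ell_2$ and $\ell_1$ that the paper extracts via Cramer's rule when decomposing $(b,a)$ in the basis $\{(m_1,n_1),(m_2,n_2)\}$, and the final linear combination is the same computation run in the opposite direction. Your version simply makes the determinant identities explicit rather than invoking the change of basis, which is a reasonable and slightly more self-contained way to present the same idea.
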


\begin{proof}
Write $b = \lambda_1m_1 + \lambda_2m_2$ and $a = \lambda_1n_1 + \lambda_2n_2$. Since $b/a$ falls between the two endpoints, $\lambda_1 > 0$ and $\lambda_2 > 0$. As $m_2 n_1 - m_1 n_2 = P$, due to Cramer's rule, the coefficients can be written in a form $\lambda_1 = \ell_1/P$ and $\lambda_2 = \ell_2/P$, for some integers $\ell_1$ and $\ell_2$, not necessarily coprime with $P$. Since they are also positive, $\ell_1 \geq 1$ and $\ell_2 \geq 1$, so $b \geq (m_1 + m_2)/P$ and $a \geq (n_1 + n_2)/P$.
\end{proof}

\begin{proof}[Proof of Lemma~\ref{lem:lowersec}]
Apply~\hyperref[eq:spec1]{$(\star_2)$}: $5a = \Gamma(\Delta_2) \leq 2d$ from which (via Lemma~\ref{lemma:asympt} and Remark~\ref{rem:const}) $25a/4 - 1/4 - \epsilon < b < 25a/4$. Therefore, for almost all admissible candidates $4b = 25a - 1$ holds. For the candidates lying on this line, however, $5a = 2d$ can not hold, as then $a$ would be even; a contradiction. So actually $5a \leq 2d - 1$, but the hyperbola determined by $5a \leq 2d - 1$ already intersects the line $4b = 25a - 1$, providing for $a$ the following upper bound: $a \leq 4g/5 + 1/5$.
\end{proof}

\begin{proof}[Proof of Lemma~\ref{lem:exclg0}]
For every $l\ge2$ the triples $(a,b,d_0) = (F_{2l-1}, F_{2l+3}, F_{2l+1})$ and $(a,b,d_0) = (F_{2l-1}^2, F_{2l+1}^2,F_{2l+1}F_{2l-1})$ satisfy the degree-genus formula~\eqref{eq:dg_one_p} with $g=0$: this is a consequence of Proposition~\ref{p:fibonacci} above. In particular, $4(a-1)(b-1) = (2d_0-3)^2 - 1$. (Notice that these triples are realised by families (c) and (d) of~\cite[Theorem 1.1]{BLMN1}.)

In general, solving~\eqref{eq:dg_one_p} as a quadratic equation for $d$, one sees that for a given $g$, $(a,b)$ is a
candidate if and only if $4(a-1)(b-1) + 8g + 1 = (2d-3)^2$. Comparing this with the above relation $4(a-1)(b-1) = (2d_0-3)^2 - 1$ we get that 
\[ (2d_0 - 3)^2 + 8g = (2d-3)^2 \]
which has only finitely many solutions for $d \neq d_0$ if $g \neq 0$.
\end{proof}

Before the proof of Lemma~\ref{lem:bound}, we need some preparation.

\begin{claim}\label{lem:secsmall}
Let $\displaystyle\frac{b}{a} \neq \frac{F_{2l+3}}{F_{2l-1}}$ be a reduced fraction in the open interval $\displaystyle\left(\frac{F_{2l+1}^2}{F_{2l-1}^2}, \frac{F_{2l+3}^2}{F_{2l+1}^2}\right)$ (where $l \geq 2$). Then $a > F_{2l-1}$ and $(a-1)(b-1) \geq F_{2l+1}(F_{2l+1}+1)$.
\end{claim}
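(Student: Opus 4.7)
The plan is to exploit the identity $F_{2l-1}F_{2l+3} = F_{2l+1}^2+1$, which is~\eqref{eq:fib1}, in order to observe that $F_{2l+3}/F_{2l-1}$ lies strictly inside the open interval $(F_{2l+1}^2/F_{2l-1}^2,\, F_{2l+3}^2/F_{2l+1}^2)$. Since the hypothesis excludes the value $b/a = F_{2l+3}/F_{2l-1}$, the fraction $b/a$ falls into exactly one of the two open sub-intervals obtained by cutting at that point, and I would apply Claim~\ref{c:smallestfrac} separately to each. Coprimality of the endpoint numerators and denominators is guaranteed by~\eqref{eq:fib0}.

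In the left sub-interval $(F_{2l+1}^2/F_{2l-1}^2,\, F_{2l+3}/F_{2l-1})$ the determinant appearing in Claim~\ref{c:smallestfrac} is $P_1 = F_{2l+3}F_{2l-1}^2 - F_{2l+1}^2F_{2l-1} = F_{2l-1}$, using~\eqref{eq:fib1}; this gives immediately $a \geq (F_{2l-1}^2+F_{2l-1})/F_{2l-1} = F_{2l-1}+1$. In the right sub-interval $(F_{2l+3}/F_{2l-1},\, F_{2l+3}^2/F_{2l+1}^2)$ the analogous computation gives $P_2 = F_{2l+3}$ and hence $a \geq (F_{2l-1}+F_{2l+1}^2)/F_{2l+3} = F_{2l-1} + (F_{2l-1}-1)/F_{2l+3}$ after simplifying with~\eqref{eq:fib1}. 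Since $F_{2l-1}\ge 2$ for $l \geq 2$, this is strictly greater than $F_{2l-1}$, and integrality of $a$ forces $a \geq F_{2l-1}+1$. This settles the first assertion in both cases.

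For the inequality $(a-1)(b-1) \geq F_{2l+1}(F_{2l+1}+1)$ I would combine the lower bound $a \geq F_{2l-1}+1$ with the slope of the lower boundary of the relevant sub-interval: $b > aF_{2l+1}^2/F_{2l-1}^2$ on the left, and $b > aF_{2l+3}/F_{2l-1}$ on the right. Plugging $a = F_{2l-1}+1$ into the resulting linear bound and simplifying with~\eqref{eq:fib1} and the Fibonacci identity $F_{2l+3} - F_{2l-1} - F_{2l+1} = 2F_{2l}$ (a direct consequence of the recurrence), the real-valued lower bound on $(a-1)(b-1)$ works out, in either sub-interval, to exceed $F_{2l+1}^2 + F_{2l+1}$ by a positive quantity at least $2F_{2l}-1/F_{2l-1} > 0$. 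A derivative check shows that the relevant expression is increasing in $a$ for $a \geq 1$, so the strict inequality persists for every admissible $a$; since $(a-1)(b-1)$ is an integer, one concludes $(a-1)(b-1) \geq F_{2l+1}(F_{2l+1}+1)$.

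The main obstacle I expect is the right sub-interval, where the bound from Claim~\ref{c:smallestfrac} only barely exceeds $F_{2l-1}$, so integrality is essential to promote it to $F_{2l-1}+1$; once this is done, everything else reduces to a routine manipulation of Fibonacci identities.
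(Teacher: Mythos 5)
Your argument is correct and takes a genuinely different route from the paper's. The paper establishes two independent lower bounds, $a \geq F_{2l-1}+1$ and $b \geq (F_{2l+1}^2 + F_{2l+3})/F_{2l-1}$, and multiplies them; to get the bound on $b$ it splits the interval into \emph{four} sub-intervals (cutting not only at $F_{2l+3}/F_{2l-1}$ but also at $(F_{2l+1}^2+2)/F_{2l-1}^2$) and invokes identity~\eqref{eq:fib3} to show the determinant $P$ equals $1$ in the rightmost piece. You cut only once, at $F_{2l+3}/F_{2l-1}$, deduce $a \geq F_{2l-1}+1$ on both sides from Claim~\ref{c:smallestfrac} ($P=F_{2l-1}$ on the left, $P=F_{2l+3}$ on the right, with integrality promoting the raw bound $F_{2l-1}+(F_{2l-1}-1)/F_{2l+3}$), and then bound the product using the slope inequality $b>a\cdot(\text{slope of lower edge})$ in place of a separate lower bound on $b$. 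Since the quadratic $(a-1)(ca-1)$ has vertex at $a=(c+1)/(2c)<1$ whenever $c>1$, it is increasing on $a\ge 1$, so evaluating at $a=F_{2l-1}+1$ suffices; the bookkeeping via~\eqref{eq:fib1} and $F_{2l+3}-F_{2l-1}-F_{2l+1}=2F_{2l}$ is correct and yields a lower bound strictly exceeding $F_{2l+1}(F_{2l+1}+1)$ in each sub-interval. Your version has fewer cases and avoids~\eqref{eq:fib3} entirely, at the cost of not producing the explicit intermediate bound~\eqref{eq:lowb} on $b$ that the paper's proof isolates.
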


\begin{proof}
Together with
\begin{equation}\label{eq:lowa}
a \geq F_{2l-1} + 1,
\end{equation}
we are going to prove that
\begin{equation}\label{eq:lowb}
b \geq \frac{F_{2l+1}^2+F_{2l+3}}{F_{2l-1}}.
\end{equation}
In fact, the above two inequalities imply that
\[ (b-1)(a-1) \geq F_{2l+1}^2 + F_{2l+3} - F_{2l-1} \geq F_{2l+1}^2 + F_{2l+1}, \]
where the last inequality follows from $F_{2l+3} = F_{2l+2} + F_{2l+1} > 2F_{2l+1}$.

We split the proof in four cases:
\begin{itemize}
\item[{\bf (i)}] $\displaystyle\frac{b}{a} \in \left(\frac{F_{2l+1}^2}{F_{2l-1}^2},\frac{F_{2l+3}}{F_{2l-1}}\right)$
\item[{\bf (ii)}] $\displaystyle \frac{b}{a} \in \left(\frac{F_{2l+3}}{F_{2l-1}} , \frac{F_{2l+1}^2 + 2}{F_{2l-1}^2}\right)$
\item[{\bf (iii)}] $\displaystyle \frac{b}{a} = \frac{F_{2l+1}^2 + 2}{F_{2l-1}^2}$
\item[{\bf (iv)}] $\displaystyle \frac{b}{a} \in \left(\frac{F_{2l+1}^2 + 2}{F_{2l-1}^2} , \frac{F_{2l+3}^2}{F_{2l+1}^2}\right)$
\end{itemize}

Notice that each fraction above is in reduced form (see \eqref{eq:fib0}, \eqref{eq:fib3}).

{\bf (i)} Using Equation~\eqref{eq:fib1}, we get that, in the notation of Claim~\ref{c:smallestfrac}, $P = F_{2l-1}^2F_{2l+3} - F_{2l+1}^2F_{2l-1} = F_{2l-1}$, so via Claim~\ref{c:smallestfrac} we immediately obtain~\eqref{eq:lowb} and~\eqref{eq:lowa}.

{\bf(ii)} If $b/a < (F_{2l+1}^2 + 2)/F_{2l-1}^2$, using Claim~\ref{c:smallestfrac}, we compute $P = F_{2l-1}$, hence~\eqref{eq:lowb} and~\eqref{eq:lowa} both hold (the estimate for $b$ is much larger than needed).

{\bf(iii)} When $b/a = (F_{2l+1}^2 + 2)/F_{2l-1}^2$,~\eqref{eq:lowb} reads:
\[ b = F_{2l+1}^2 + 2 \geq \frac{F_{2l+1}^2+F_{2l+3}}{F_{2l-1}}, \]
which follows from rearranging
\[ F_{2l+1}^2\left(F_{2l-1} - 1\right) \geq F_{2l}^2 \geq 3F_{2l} = F_{2l+3} - 2F_{2l-1}.\]
On the other hand, the inequality~\eqref{eq:lowa} is obvious.

{\bf(iv)} If $(F_{2l+1}^2 + 2)/F_{2l-1}^2 < b/a$, then using Claim~\ref{c:smallestfrac}, we get $P = 1$. This leads to
\[ b \geq F_{2l+3}^2 + F_{2l+1}^2 + 2 > \frac{F_{2l+1}^2+F_{2l+3}}{F_{2l-1}} \]
and
\[ a \geq F_{2l+1}^2 + F_{2l-1}^2 > F_{2l-1} + 1, \]
which show both~\eqref{eq:lowb} and~\eqref{eq:lowa}.
\end{proof}

In particular, the above claim says that for a pair $(a,b) \in S^*_l$ the assumptions of the next Lemma~\ref{lem:key} hold automatically.

\begin{lemma}\label{lem:key}
If for an admissible candidate $(a,b)$ we have $F_{2l+1} \leq d-2$, $F_{2l-1} < a$ and $(F_{2l+1}/F_{2l-1})^2 < b/a$, then one of the following two inequalities hold:
\[F_{2l+3}a \leq F_{2l+1}d\qquad {\rm or} \qquad F_{2l-1}b \leq F_{2l+1}d.\]
\end{lemma}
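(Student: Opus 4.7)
The plan is to apply the inequality~\hyperref[eq:spec1]{$(\star_{F_{2l+1}})$}, available because $F_{2l+1} \leq d-2$, to obtain $\Gamma(\Delta_{F_{2l+1}}) \leq F_{2l+1}d$. Setting $M := \min(F_{2l-1}b,\,F_{2l+3}a)$, the desired alternative is exactly the inequality $M \leq F_{2l+1}d$, so it will suffice to prove $\Gamma(\Delta_{F_{2l+1}}) \geq M$, equivalently that the number of elements of $\Gamma = \langle a,b\rangle$ strictly smaller than $M$ is at most $\Delta_{F_{2l+1}} - 1 = F_{2l+1}(F_{2l+1}+3)/2$. Since $\gcd(a,b) = 1$ and $M < ab$, every such element has a unique representation $ub+va$ with $u,v \in \N$, turning the task into a lattice-point count; I will handle it by splitting cases according to which of $F_{2l-1}b$, $F_{2l+3}a$ is smaller.

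In Case A ($b/a \leq F_{2l+3}/F_{2l-1}$, so $M = F_{2l-1}b$), pairs $(u,v)$ with $ub+va < M$ necessarily satisfy $u \leq F_{2l-1}-1$, and for each such $u$ the number of admissible $v$ equals $\lceil(F_{2l-1}-u)b/a\rceil$---a ceiling of a non-integer, since $a > F_{2l-1}$ and $\gcd(a,b) = 1$. The technical heart of the argument is the claim that for every $k = 1,\dots,F_{2l-1}$,
\[ \lceil kb/a\rceil = \lceil kF_{2l+3}/F_{2l-1}\rceil; \]
by~\eqref{eq:fib1} the open interval $\bigl(kF_{2l+1}^2/F_{2l-1}^2,\ kF_{2l+3}/F_{2l-1}\bigr)$ containing $kb/a$ has length $k/F_{2l-1}^2 \leq 1/F_{2l-1}$, and a short check using $\gcd(F_{2l-1}, F_{2l+3}) = 1$ confirms that it contains no integer. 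Substituting $k = F_{2l-1}-u$, invoking Hermite's reciprocity
\[
\sum_{k=1}^{F_{2l-1}-1}\left\lfloor \frac{kF_{2l+3}}{F_{2l-1}}\right\rfloor = \frac{(F_{2l-1}-1)(F_{2l+3}-1)}{2},
\]
and simplifying via~\eqref{eq:fib1} together with the identity $F_{2l-1}+F_{2l+3} = 3F_{2l+1}$ from~\eqref{eq:fib2} reduces the total count to exactly $F_{2l+1}(F_{2l+1}+3)/2$.

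In Case B ($b/a > F_{2l+3}/F_{2l-1}$, so $M = F_{2l+3}a$), the analogous count---over pairs $(u,v) \in \N^2$ satisfying $ub+va < M$, which force $v \leq F_{2l+3}-1$---yields, after the change of variable $m = F_{2l+3}-v$, the sum $\sum_{m=1}^{F_{2l+3}} \lceil ma/b\rceil$. The case assumption alone gives $a/b < F_{2l-1}/F_{2l+3}$, so monotonicity of the ceiling delivers $\lceil ma/b\rceil \leq \lceil mF_{2l-1}/F_{2l+3}\rceil$, and the very same Hermite computation (with the roles of $F_{2l-1}$ and $F_{2l+3}$ swapped) bounds the sum by $\Delta_{F_{2l+1}}-1$; no upper bound on $b/a$ is needed, so the case in which $(a,b)$ lies outside the sector $S_l$ is covered automatically, with a strict (rather than tight) bound. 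Combining the two cases gives $M \leq F_{2l+1}d$, as required. The main obstacle is the no-integer-in-interval check of Case A: it reduces to verifying that $\{kF_{2l+3}/F_{2l-1}\} \geq 1/F_{2l-1}$ for $k \in \{1,\dots,F_{2l-1}-1\}$, so that subtracting $k/F_{2l-1}^2$ does not cross an integer; this is immediate from $\gcd(F_{2l-1},F_{2l+3}) = 1$, after which everything else is straightforward Fibonacci arithmetic plus Hermite's formula.
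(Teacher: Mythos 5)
Your proof is correct, and it takes a genuinely different route from the paper's, though both are ultimately lattice-point counts anchored at the same triangular number $\Delta_{F_{2l+1}}$.

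The paper does not split into cases according to which of $F_{2l-1}b$, $F_{2l+3}a$ is smaller. Instead it forms the single auxiliary region $H_l = \{(u,v) \in \N^2 : uF_{2l+1}^2 + vF_{2l-1}^2 \leq F_{2l-1}^2 F_{2l+3}\}$, which is exactly large enough to contain every $(u,v)$ with $ub+va < F_{2l+3}a$ (under the slope hypothesis), plus the boundary points $(0,F_{2l+3})$ and $(F_{2l-1},0)$. It counts $|H_l| = \Delta_{F_{2l+1}}+1$ geometrically by replacing the triangle with a slightly smaller one with integral vertices and checking that the sliver between them contains no lattice points; then, instead of your uniform ``$\Gamma(\Delta_{F_{2l+1}}) \geq M$,'' it uses a dichotomy: either $F_{2l+3}a$ is not the $(\Delta_{F_{2l+1}}+1)$-th semigroup element, in which case it is at most $\Gamma(\Delta_{F_{2l+1}}) \leq F_{2l+1}d$; or it is exactly that element, which forces the entire region $H_l$---and in particular $(F_{2l-1},0)$, i.e.\ $F_{2l-1}b$---to lie strictly below $F_{2l+3}a$, giving $F_{2l-1}b \leq \Gamma(\Delta_{F_{2l+1}}) \leq F_{2l+1}d$. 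Your approach instead directly establishes $\Gamma(\Delta_{F_{2l+1}}) \geq \min(F_{2l-1}b, F_{2l+3}a)$ by counting the semigroup elements below the minimum, splitting on which side of the line $F_{2l-1}b = F_{2l+3}a$ the pair lies and evaluating the resulting sums of ceilings via Hermite's/Eisenstein's identity. What this buys you is a cleaner, more symmetric statement of what is being proved about the semigroup; what it costs is the case split, the invocation of Hermite's identity as a black box, and the delicate no-integer-in-interval argument in Case~A. Your closing remark about verifying $\{kF_{2l+3}/F_{2l-1}\} \geq 1/F_{2l-1}$ only covers $1 \leq k \leq F_{2l-1}-1$; you should note that for $k = F_{2l-1}$ the right endpoint $F_{2l+3}$ is itself an integer but is excluded from the open interval, which has length $1/F_{2l-1} < 1$, so no integer lies in it and the claimed equality of ceilings still holds. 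This is a minor gap in exposition, not in substance. Both proofs reduce to the same Fibonacci identities \eqref{eq:fib1} and \eqref{eq:fib2} to identify the final count with $\Delta_{F_{2l+1}}$.
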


\begin{proof}
The key point is that due to the assumption $F_{2l+1} \leq d-2$ we can apply \hyperref[eq:spec1]{($\star_{F_{2l+1}})$}.

We count how many semigroup elements $ub + va$ can precede $F_{2l+3}a$. Since we assumed $(F_{2l+1}/F_{2l-1})^2 < b/a$, we can prove that $ub + va > F_{2l+3}a$ as soon as $uF^2_{2l+1} > F^2_{2l-1}(F_{2l+3}-v)$. So there is a chance to have $ub + va < F_{2l+3}a$ only if
\[
u F^2_{2l+1} \leq F^2_{2l-1}(F_{2l+3}-v).
\]

So bounding the number of semigroup elements that precede $F_{2l+3}a$ turns into the question of how many integer pairs $(u,v)$ satisfy $0 \leq u$, $0 \leq v$, and $u F^2_{2l+1} + v F^2_{2l-1} \leq F^2_{2l-1}F_{2l+3}$. Denote the set of these pairs by $H_l$ and its cardinality by $N_l$. Notice that the pair $(u,v) = (0, F_{2l+3})$ in $H_l$ corresponds to $F_{2l+3}a$. Later, it will be important that $(u,v) = (F_{2l-1}, 0) \in H_l$ as well (see~\eqref{eq:fib1}), i.e.~the corresponding element, $F_{2l-1}b$ can precede $F_{2l+3}a$.

\begin{claim}
The cardinality of $H_l$ is $N_l = \Delta_{F_{2l+1}} + 1$.
\end{claim}
\begin{proof}
Notice that $N_l$ is the number of integral lattice points \emph{on the boundary or in the interior} of the triangle $T$ with vertices given by coordinates $O = (0,0)$, $A = (0, F_{2l+3})$ and $C = (F_{2l+3}\frac{F^2_{2l-1}}{F^2_{2l+1}}, 0) = (F_{2l-1} + \frac{F_{2l-1}}{F^2_{2l+1}}, 0)$ (use~\eqref{eq:fib1}).

We will count the integral lattice points in the interior or on the boundary of a smaller triangle $T'$ with integral lattice point vertices given by coordinates $O = (0,0)$, $A = (0, F_{2l+3})$ and $B = (F_{2l-1}, 0)$ instead. This number will be $N_l$ as well, as there is no lattice point in the closure of the difference $T \setminus T'$ (triangle $ABC$) except points $A$ and $B$.
To see this, assume that there is such a lattice point $P$ with coordinates $(u,v)$ in the triangle $ABC$. Set $0 \leq s := F_{2l+3} - v \leq F_{2l+3}$ and $r := u$, and compare the slopes of lines $AC$, $AB$ and $AP$: the existence of the point $P$ would mean that the slope of $AP$ (which is $-s/r$) is either strictly between the slopes of $AB$ and $AC$ (being $-F_{2l+3}/F_{2l-1}$ and $-F_{2l+1}^2/F_{2l-1}^2$, respectively), or coincides with one of them. But this is a contradiction, since the fractions $F_{2l-1}/F_{2l+3}$ and $F_{2l-1}^2/F_{2l+1}^2$ are reduced (see \eqref{eq:fib0}), so $P$ can not be on $AB$ or $AC$; and there is no rational number $r/s$ such that $s \leq F_{2l+3}$ and
\[
 \frac{F_{2l-1}}{F_{2l+3}} < \frac{r}{s} < \frac{F^2_{2l-1}}{F^2_{2l+1}}.
\]
To prove the above fact, use Claim~\ref{c:smallestfrac} and notice that $P = F_{2l+3}F_{2l-1}^2 - F_{2l-1}F_{2l+1}^2 = F_{2l-1}$ (use~\eqref{eq:fib1}), and get $s \geq (F_{2l+3} + F_{2l+1}^2)/F_{2l-1} > F_{2l+3}$. (For this last inequality use again~\eqref{eq:fib1} and the trivial fact that $F_{2l+3} > 1$.)

Since $F_{2l-1}$ and $F_{2l+3}$ are coprime, there are no lattice points on the hypotenuse of the triangle $T'$ other than the endpoints. In this way, $N_l = 1 + \frac{1}{2} (F_{2l-1} + 1)(F_{2l+3} + 1)$ (half of the number of the lattice points in the appropriate closed rectangle, plus one endpoint of the hypotenuse), which, using~\eqref{eq:fib2} further equals $1 + \frac{1}{2} (F_{2l+1} + 1)(F_{2l+1} + 2) = 1 + \Delta_{F_{2l+1}}$.
\end{proof}

This means that at most $\Delta_{F_{2l+1}}$ semigroup elements can precede $F_{2l+3}a$ (remember that $(0, F_{2l+3}) \in H_l$). So $F_{2l+3}a$ is \emph{at most} the $(\Delta_{F_{2l+1}}+1)$-th element: $F_{2l+3}a \leq \Gamma(\Delta_{F_{2l+1}}+1)$.

If $F_{2l+3}a$ was not the $(\Delta_{F_{2l+1}}+1)$-th, then by~\hyperref[eq:spec1]{($\star_{F_{2l+1}}$)}, we would have
\[ F_{2l+3}a \leq \Gamma(\Delta_{F_{2l+1}}) \leq F_{2l+1}d \]
that is the first inequality we were looking for.

On the other hand, if $F_{2l+3}a$ is the $(\Delta_{F_{2l+1}}+1)$th element, i.e.~$F_{2l+3}a = \Gamma(\Delta_{F_{2l+1}}+1)$, then \emph{all the semigroup elements corresponding to integer pairs in} $H_l$ \emph{have to be smaller than} $F_{2l+3}a$ (except of course $F_{2l+3}a$ itself). In particular, $F_{2l-1}b < F_{2l+3}a$ (equality here can not hold for $a > F_{2l-1}$ due to coprimality), so $F_{2l-1}b$ is \emph{at most} the $\Delta_{F_{2l+1}}$-th element. In this case, applying~\hyperref[eq:spec1]{($\star_{F_{2l+1}})$} we have \[F_{2l-1}b \leq \Gamma(\Delta_{F_{2l+1}})\leq F_{2l+1}d. \]

Thus the proof of Lemma~\ref{lem:key} is completed.
\end{proof}

\begin{proof}[Proof of Lemma~\ref{lem:bound}]

First notice that both estimates are true for $(a,b) = (F_{2l-1}, F_{2l+3})$, therefore we can assume that $(a, b) \in S^*_l$, thus we can apply Claim~\ref{lem:secsmall} and Lemma~\ref{lem:key}.

Assume that for an admissible candidate $(a,b)$ in the $l$-th sector the first inequality of Lemma~\ref{lem:key} holds: that is, the quantity $r = F_{2l+1}d - F_{2l+3}a$ is non-negative. Let $s = F_{2l+3}^2a - F_{2l+1}^2b \geq 1$.
A direct computation of the intersection of the line $F_{2l+3}^2a - F_{2l+1}^2b = s = $ constant and the hyperbola $F_{2l+1}d - F_{2+3}a = r = $ constant yields
\begin{equation}\label{eq:aintersec}
 a = (2g-1)\frac{F_{2l+1}^2}{s+2F_{2l+3}r-1} + \frac{s + 3F_{2l+1}r - r^2}{s + 2F_{2l+3}r - 1}.
\end{equation}
(To obtain this, from the two equations defining $s$ and $r$ express $b$ and $d$ in terms of $a,r,s$, then substitute into the degree-genus formula \eqref{eq:dg_one_p}; express $a$ in terms of $r,s$ and finally use the identity \eqref{eq:fib4}.)

If $r = 0$, then, as $F_{2l+1}$ and $F_{2l+3}$ are coprime, $F_{2l+1}$ divides $a$, so $s$ is divisible by $F_{2l+1}$ as well. In particular, $s \geq F_{2l+1}$. So we can estimate the right hand side of the above expression as follows:
\[ a \leq (2g-1)\frac{F_{2l+1}^2}{F_{2l+1}-1} + 2. \]

If $r \geq 1$, then (using $s \geq 1$ as well) we get the following upper bound:
\[ a \leq (2g-1)\frac{F_{2l+1}^2}{2F_{2l+3}} + 1. \]

The upper bound given in~\eqref{eq:ub1} is a generous upper estimate for both of the above bounds.

If the second case of Lemma~\ref{lem:key} holds, introduce notations $s = F_{2l-1}^2b - F_{2l+1}^2a$ and $r = F_{2l+1}d - F_{2l-1}b$. In a similar way as above, we see that~\eqref{eq:ub2} is a (rather generous) upper bound for $b$.
\end{proof}

Observe that in the case $g=0$ the statement of Lemma~\ref{lem:bound} does not hold; however, a similar computation shows that for any admissible candidate in the \emph{punctured} sector $(a,b)\in S^{\ast}_l$ either $a < 2$ or $b < 2$ holds (or both): in fact, the first summand of the expression \eqref{eq:aintersec} is negative in this case and the second is at most $2$. So the above proof also shows that \emph{there are no admissible candidates in} $S_l^{\ast}$ for $g=0$. See also Remark~\ref{rmk:g02ndprop}.

\begin{proof}[Proof of Lemma~\ref{lem:limzero}]
It is obvious that $0 \leq \phi^4a-b$, as the pair $(a,b)$ is assumed to be in the sector $S_l$ which is below the line $b = \phi^4a$. To obtain the upper bound, we use Lemma~\ref{lem:bound}. When~\eqref{eq:ub1} holds, using~\eqref{eq:lim1} we can write:
\begin{align*}0 &\leq \phi^4a - b \leq  \phi^4a - \frac{F_{2l+1}^2}{F_{2l-1}^2}a = a\left(\phi^4-\frac{F_{2l+1}^2}{F_{2l-1}^2}\right) \\
&\leq \left(2(2g-1)\frac{1}{F_{2l+1}}F_{2l+1}^2+2\right)\left(\phi^4-\frac{F_{2l+1}^2}{F_{2l-1}^2}\right) \rightarrow 0.
\end{align*}

On the other hand, when~\eqref{eq:ub2} holds, using~\eqref{eq:lim2} we can write:
\begin{align*}0 &\leq \phi^4a - b \leq  \phi^4\frac{F_{2l-1}^2}{F_{2l+1}^2}b - b = \phi^4b\left(\frac{F_{2l-1}^2}{F_{2l+1}^2} - \phi^{-4}\right)\\
&\leq \phi^4 \left(2(2g-1) \frac{1}{F_{2l-1}} F_{2l+1}^2 + 2 \right)\left(\frac{F_{2l-1}^2}{F_{2l+1}^2} - \frac1{\phi^4}\right) \rightarrow 0.\qedhere
\end{align*}
\end{proof} % proof of theorem 1.2
\section{A generalised Pell equation}\label{s:pell}

This section is a short trip in number theory, in which we study the solutions of the generalised Pell equation
\begin{equation}\label{e:gpell}
x^2-5y^2 = n \tag{$\varspadesuit_n$}
\end{equation}
as $n$ varies among the integers. This is closely related to Equation~\eqref{eq:pell}. In particular, we will be determining the values of $n$ for which there exists a solution $(x,y)$ to~\eqref{e:gpell} where $x$ and $y$ are coprime; for these $n$ there are infinitely many such pairs, and this allows us to generalise Corollary~\ref{c:finite} and prove Theorem~\ref{t:construction}. Along the way, we will also introduce some notation that we will use in the next section.

We will work in the ring $\O = \O_K$ of integers of the real quadratic field $K=\Q(\sqrt5)$; there is an automorphism on $\O$, that we call \e{conjugation} and denote with $\alpha\mapsto\overline\alpha$, that is obtained by restricting the automorphism of $K$ (as a $\Q$-algebra) that maps $\sqrt5$ to $-\sqrt5$.

Given $\alpha = x+y\sqrt5\in\O$ we will call $N(\alpha) = \alpha\cdot\overline\alpha = x^2 - 5y^2$ the \emph{norm} of $\alpha$; notice that if $N(\alpha)$ is prime, then $\alpha$ itself is prime. The element $\phi = \frac{1+\sqrt5}2 \in \O$ has norm $-1$, hence it is a unit.

We begin by collecting some well-known facts about $\O$. (See, for example,~\cite{Marcus}, Chapters 2 and 3.)

\begin{thm}[{\cite[Chapter 2]{Marcus}}]
The ring $\O$ has the following properties:
\begin{itemize}
\item $\O$ is generated (as a ring) by $\phi$, {i.e.}~$\O = \Z[\phi]$;
\item $\O$ is a Euclidean ring, hence it is a principal ideal domain (PID);
\item the group of units $\O^*$ of $\O$ is isomorphic to $\Z\oplus\Z/2\Z$, and the isomorphism maps $\phi$ to $(1,0)$ and $-1$ to $(0,1)$; in particular, elements of norm $1$ are of the form $\pm\phi^{2h}$ for some integer $h$.
\end{itemize}
\end{thm}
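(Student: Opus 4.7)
The statement assembles three standard facts about the ring of integers of $\Q(\sqrt 5)$, which I would prove in order.

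For the first bullet, I would start from an arbitrary $\alpha = u + v\sqrt 5 \in K$ and use that $\alpha\in\O$ iff its minimal polynomial over $\Q$ has integer coefficients. If $\alpha\notin\Q$ this minimal polynomial is $X^2 - 2uX + (u^2 - 5v^2)$, so the integrality conditions read $2u\in\Z$ and $u^2 - 5v^2\in\Z$. Writing $u = m/2$, $v = n/2$ with $m,n\in\Z$ (after multiplying by a common denominator and using $2u\in\Z$), the second condition becomes $m^2 \equiv 5n^2 \pmod 4$, i.e.\ $m\equiv n\pmod 2$. This describes exactly the $\Z$-module spanned by $1$ and $\phi = (1+\sqrt 5)/2$, so $\O = \Z[\phi]$.

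For the second bullet I would use $|N(\cdot)|$ as a Euclidean size function. Given $\alpha,\beta\in\O$ with $\beta\ne 0$, write $\alpha/\beta = r + s\sqrt 5$ with $r,s\in\Q$; it suffices to produce $\gamma = p + q\phi \in \O$ with $|N(\alpha/\beta - \gamma)| < 1$, because then $\delta := \alpha - \beta\gamma$ lies in $\O$ and satisfies $|N(\delta)| = |N(\beta)|\cdot|N(\alpha/\beta-\gamma)| < |N(\beta)|$. Expanding $\alpha/\beta - \gamma = (r - p - q/2) + (s - q/2)\sqrt 5$, I would first choose $q\in\Z$ so that $|s - q/2| \le 1/4$, and then $p\in\Z$ so that $|r - p - q/2| \le 1/2$. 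The resulting norm is bounded by $(1/2)^2 + 5\cdot(1/4)^2 = 9/16 < 1$, which is the key inequality. This gives the Euclidean property, and the PID property is then automatic.

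For the third bullet I would invoke Dirichlet's unit theorem: for the real quadratic field $K$ we have $r_1 = 2$, $r_2 = 0$, so $\O^*$ has free rank $1$ and torsion equal to the roots of unity in $K$, which are $\{\pm 1\}$. Hence $\O^* \cong \Z\oplus\Z/2\Z$, generated by some fundamental unit $\varepsilon > 1$ and $-1$. Since $N(\phi) = \phi\bar\phi = -1$, the element $\phi$ is a unit, and $\phi>1$; it remains to show no smaller unit exceeds $1$. Suppose $\eta = a + b\phi\in\O^*$ with $1 < \eta < \phi$. Then $|\bar\eta| = 1/\eta < 1$, so both $\eta$ and $\bar\eta$ lie in bounded intervals, forcing the integers $a,b$ into a finite set which one checks directly to contain no such unit. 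Thus $\phi$ is fundamental and every unit has the form $\pm \phi^h$ for $h\in\Z$; since $N(\phi^h) = (-1)^h$, the norm-one units are exactly those with $h$ even.

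The main obstacle is the Euclidean step: the bound $9/16$ is comfortably less than $1$, but if one were sloppy about which lattice basis to use (e.g.\ $\{1,\sqrt 5\}$ instead of $\{1,\phi\}$) the estimate would fail, so the coordinate choice matters. Everything else is either a direct integrality computation or a black-box application of Dirichlet's theorem combined with a short finite check to pin down the fundamental unit.
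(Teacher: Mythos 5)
Your proof is correct. The paper does not actually prove this theorem — it cites \cite[Chapter 2]{Marcus} — so there is no in-text argument to compare against; the three steps you give (the integrality criterion for quadratic rings of integers, the explicit Euclidean bound $9/16<1$ using the $\{1,\phi\}$ lattice, and Dirichlet's unit theorem plus a finite check to identify $\phi$ as the fundamental unit) are precisely the standard textbook route that the citation points to. Two small things you gloss over but should be aware of: in the first bullet you use that $2v\in\Z$ as well as $2u\in\Z$, which requires a short extra argument from $u^2-5v^2\in\Z$ (if $v=p/q$ in lowest terms then $q^2\mid 20$, forcing $q\in\{1,2\}$); and in the third bullet the ``finite check'' that no unit lies strictly between $1$ and $\phi$ is asserted rather than carried out, though it is a one-line case analysis on the coefficient of $\phi$.
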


Since $\O$ contains $\Z[\sqrt5]$, whose additive group is isomorphic to $\Z^2$, in what follows we will frequently identify a pair of integers $(x,y)$ with the algebraic integer $x+y\sqrt5$. In particular, we will identify a solution $(x,y)$ of Equation~\eqref{e:gpell} with the associated element $x+y\sqrt5$ of $\O$ of norm $n$.

\subsection{Solving equation~\eqref{e:gpell}}
We start by looking at equation ~\hyperref[e:gpell]{$(\varspadesuit_p)$} when $p$ is a prime. We recall a standard result about factorisation of primes in $\mathcal{O}$.

\begin{thm}[{\cite[Theorem 25]{Marcus}}]
Given a prime $p\in\Z$, consider the ideal $P = p\O$:
\begin{itemize}
\item[(i)] $P$ is prime if and only if $p\equiv\pm2\pmod 5$;
\item[(ii)] $P=Q^2$ for some prime ideal $Q\subset\O$ if and only if $p=5$;
\item[(iii)] $P=QQ'$ for two distinct prime ideals $Q, Q'\subset\O$ if and only if $p\equiv\pm1\pmod5$; moreover, in this case $Q' = \overline Q$.
\end{itemize}
Additionally: if $(5) = Q_1Q_2$ for some prime ideals $Q_1,Q_2$, then $Q_1 = Q_2 = (\sqrt5)$; if $p\equiv\pm1\pmod 5$ and $Q_1\overline{Q_1}=Q_2\overline{Q_2}$ are two prime factorisations of $p\O$, then either $Q_1 = Q_2$ or $Q_1 = \overline{Q_2}$.
\end{thm}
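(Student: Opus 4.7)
The plan is to reduce the problem to the factorisation of the minimal polynomial of $\phi$ modulo $p$, via the Kummer--Dedekind theorem. Since the excerpt already records that $\mathcal{O}=\mathbb{Z}[\phi]$, where $\phi$ satisfies $f(x)=x^2-x-1$, no conductor issue arises and the factorisation of the ideal $p\mathcal{O}$ corresponds verbatim to the factorisation of $f(x)$ in $\mathbb{F}_p[x]$: namely, if $f(x)\equiv\prod_i g_i(x)^{e_i}\pmod p$ with $g_i$ distinct monic irreducibles, then $p\mathcal{O}=\prod_i Q_i^{e_i}$ where $Q_i=(p,g_i(\phi))$ are the distinct prime ideals above $p$.

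First I would dispose of case (ii). The discriminant of $f$ is $5$, so $f$ has a repeated root in $\overline{\mathbb{F}_p}$ precisely when $p=5$; in that case $f(x)\equiv (x-3)^2\pmod 5$, giving $5\mathcal{O}=Q^2$ with $Q=(5,\phi-3)=(\sqrt 5)$ (the last equality follows from $2\phi-1=\sqrt 5$). For $p\neq 5$, the polynomial $f$ is separable mod $p$, so $p$ is unramified, and the question is whether $f$ splits or stays irreducible. By completing the square (using $2$ is invertible mod $p$ when $p\neq 2$), $f$ splits over $\mathbb{F}_p$ iff $5$ is a square mod $p$. For $p=2$, a direct check gives $f(x)=x^2+x+1$ which is irreducible over $\mathbb{F}_2$, consistent with $2\equiv-3\equiv 2\pmod 5$.

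Next, I would apply quadratic reciprocity to translate the condition ``$5$ is a square mod $p$'' into a congruence on $p$ mod $5$. Since $5\equiv 1\pmod 4$, the reciprocity law gives $\left(\frac{5}{p}\right)=\left(\frac{p}{5}\right)$ for every odd prime $p\neq 5$, and the quadratic residues mod $5$ are exactly $\{1,4\}=\{\pm 1\pmod 5\}$. Thus for $p\neq 2,5$: $f$ splits into two distinct linear factors mod $p$ iff $p\equiv\pm 1\pmod 5$, proving case (iii); and $f$ is irreducible mod $p$ iff $p\equiv\pm 2\pmod 5$, proving case (i). The case $p=2$ has already been checked.

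Finally, I would handle the uniqueness statements. In case (iii), if $p\mathcal{O}=Q\overline{Q}$, then the Galois action $\alpha\mapsto\overline\alpha$ permutes the prime ideals above $p$, and since there are exactly two of them, it must swap them; together with unique factorisation of ideals in the Dedekind domain $\mathcal{O}$, this forces any two factorisations $Q_1\overline{Q_1}=Q_2\overline{Q_2}$ to satisfy $\{Q_1,\overline{Q_1}\}=\{Q_2,\overline{Q_2}\}$, which is the stated assertion. The equality $Q_1=Q_2=(\sqrt 5)$ in the ramified case was observed during the analysis of $p=5$. The main obstacle, if any, is just the bookkeeping to ensure $p=2$ is not forgotten (since quadratic reciprocity in its standard form is usually stated for odd primes), but the direct check above bypasses this comfortably.
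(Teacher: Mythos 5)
The paper does not prove this theorem; it is quoted as \cite[Theorem 25]{Marcus} and treated as a black box, so there is no internal proof to compare against. Your Kummer--Dedekind argument is the canonical proof of this splitting result and is essentially what Marcus does, and the computations (discriminant $5$, the factorisation $(x-3)^2$ mod $5$, the identity $(5,\phi-3)=(\sqrt5)$, the $p=2$ check, and the reciprocity step) are all correct. The only place you move slightly too fast is the claim that conjugation ``must swap'' the two primes above a split $p$: a permutation of a two-element set could also fix both, so you need to rule that out. This follows, for instance, because in the split case $e=f=1$ and $g=2$, so the decomposition group of each $Q$ is trivial and the nontrivial Galois element cannot fix $Q$; alternatively, if $\overline{Q}=Q$ then conjugation is trivial on $\mathcal{O}/Q\cong\mathbb{F}_p$, forcing $2\sqrt5\in Q$ and hence $Q\mid(20)$, impossible for $p\neq2,5$. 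With that one sentence added, the proof is complete.
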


In particular, since $\O$ is a PID, the ideal $Q$ of point (iii) above is generated by an element $\alpha\in\O$, whose norm $N(\alpha)$ is $\pm p$. Up to multiplying with $\phi$, we can assume that $N(\alpha) = p$.

\begin{claim}\label{c:integral}
The ideal $Q$ is generated by an element of norm $p$ in $\Z[\sqrt5]\subset\O$.
\end{claim}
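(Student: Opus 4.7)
The plan is to multiply $\alpha$ by a suitable unit of norm $+1$ to land in $\Z[\sqrt{5}]$. Since $N(\phi)=-1$, every unit of norm $+1$ is of the form $\pm\phi^{2k}$; thus the plan reduces to finding an integer $k$ with $\alpha\phi^{2k}\in\Z[\sqrt{5}]$. Multiplication by $\phi^{2k}$ preserves both the ideal $(\alpha)=Q$ and the norm $p$, so any such $\beta=\alpha\phi^{2k}$ is the desired generator.

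First I would write $\alpha=m+n\phi$ with $m,n\in\Z$. Since $\sqrt{5}=2\phi-1$, the subring $\Z[\sqrt{5}]$ is exactly the set of elements $m+n\phi$ with $n$ even, so the question becomes whether the $\phi$-coefficient of $\alpha\phi^{2k}$ can be made even for some $k$.

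Next I would compute, using the relation $\phi^{2}=1+\phi$, that multiplication by $\phi^{2}$ sends $m+n\phi$ to $(m+n)+(m+2n)\phi$. The induced map on coefficients reduces modulo $2$ to the matrix
\[
\begin{pmatrix} 1 & 1 \\ 1 & 0 \end{pmatrix}\in\mathrm{GL}_2(\F_2),
\]
which has order $3$. Iterating, the $\phi$-coefficients of $\alpha,\alpha\phi^{2},\alpha\phi^{4}$ are congruent to $n,\,m,\,m+n$ modulo $2$ respectively.

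To conclude, I would observe that $(m,n)\not\equiv(0,0)\pmod{2}$: otherwise $\alpha\in 2\O$ and hence $p=N(\alpha)\in 4\Z$, which is impossible as $p\equiv\pm1\pmod 5$ forces $p$ to be odd. Therefore at least one of $n,\,m,\,m+n$ vanishes in $\F_2$, and picking the corresponding $k\in\{0,1,2\}$ yields $\beta=\alpha\phi^{2k}\in\Z[\sqrt{5}]$ with $N(\beta)=p$ and $(\beta)=Q$. I do not foresee any real obstacle; the only point that requires a bit of care is tracking how multiplication by $\phi^{2}$ acts on the coset space $\O/\Z[\sqrt{5}]\cong\Z/2\Z$ (where it is \emph{not} the identity, since $\Z[\sqrt{5}]$ is a subring but not an ideal of $\O$).
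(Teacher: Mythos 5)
Your proof is correct and is essentially the same as the paper's: both adjust the given generator $\alpha$ by a unit of norm $1$ (a power of $\phi^{2}$) so that the $\phi$-coefficient becomes even. The paper handles two explicit cases (multiplying by $\phi^{2}$ when $u$ is even and by $\overline{\phi}\vphantom{\phi}^{2}=\phi^{-2}$ when $u$ is odd), while you package the same computation by noting that the mod-$2$ matrix has order $3$ so some $\alpha\phi^{2k}$ with $k\in\{0,1,2\}$ works; these differ only in bookkeeping, not in substance. One small remark: your observation that $(m,n)\not\equiv(0,0)\pmod 2$ is unnecessary, since for any $(m,n)\in\F_2^2$ at least one of $n$, $m$, $m+n$ vanishes (indeed all three vanish when $(m,n)=(0,0)$, in which case $k=0$ already works).
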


\begin{proof}
Let $\alpha$ be a generator $\alpha$ of $Q$ with norm $p$. Write $\alpha = u+v\phi$. If $v$ is even, there is nothing to prove; therefore, let us suppose that $v$ is odd.

If $u$ is even, consider $\alpha' = \phi^2 \alpha$; $N(\alpha') = N(\alpha)N(\phi)^2 = p$, and, since $\phi^2 = \phi+1$, we have
\[
\alpha' = \phi^2\cdot(u+v\phi) = u\phi + u + 2\phi v + v = u+v + (u+2v)\phi,
\]
and $u+2v$ is even.

Analogously, if $u$ is odd, consider $\alpha' = \overline\phi{\vphantom\phi}^2 \alpha$; as above, $N(\alpha') = p$, and, since $\overline\phi = 1-\phi$ and $\overline\phi{\vphantom\phi}^2 = 2-\phi$, we have
\[
\alpha' = \overline\phi{\vphantom\phi}^2(u+v\phi) = 2u - u\phi - v(1-\phi) = 2u-v + (v-u)\phi,
\]
and $v-u$ is even.

In either case, $\alpha'\in\Z[\sqrt5]$ and has norm $p$.
\end{proof}

In particular, for every prime $p\equiv0,\pm1\pmod 5$ we have found two integers $x,y\in \Z$ such that $x^2-5y^2=p$. We let $\alpha_p = x+y\sqrt5$ and we call it a \e{fundamental solution} of the generalised Pell equation~\hyperref[e:gpell]{$(\varspadesuit_p)$}. If $p\neq 5$, we also have that $\alpha_p$ and $\overline\alpha_p$ are coprime.

We now turn to the existence of solutions to the generalised Pell equation~\eqref{e:gpell}. We will need the following lemma.

\begin{lemma}\label{l:qtoq2}
Let $q$ be an odd prime, $q\equiv\pm2\pmod 5$. If $q$ divides $n$ and $x^2-5y^2 = n$, then $q$ divides both $x$ and $y$.
\end{lemma}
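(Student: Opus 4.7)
The plan is to reduce the congruence $x^2 \equiv 5y^2 \pmod q$ to a statement about whether $5$ is a quadratic residue modulo $q$, and then exploit the hypothesis $q\equiv\pm2\pmod 5$ via quadratic reciprocity to obtain a contradiction unless $q\mid y$.

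First I would observe that $x^2 - 5y^2 = n \equiv 0\pmod q$ gives $x^2 \equiv 5y^2\pmod q$. The goal will then be to show $q\mid y$; once this is established, $q\mid x^2$, and since $q$ is prime this immediately gives $q\mid x$. So the heart of the argument is ruling out the case $q\nmid y$.

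Assume for contradiction that $q\nmid y$. Then $y$ is invertible modulo $q$, and the congruence rearranges to $(xy^{-1})^2 \equiv 5\pmod q$, i.e.\ $5$ is a quadratic residue modulo $q$. Since $q$ is odd and distinct from $5$ (as $q\equiv\pm2\pmod 5$) and since $5\equiv 1\pmod 4$, the law of quadratic reciprocity yields
\[
\left(\frac{5}{q}\right) = \left(\frac{q}{5}\right).
\]
The squares modulo $5$ are $\{0,1,4\}$, so the hypothesis $q\equiv 2$ or $q\equiv 3\pmod 5$ gives $\left(\frac{q}{5}\right) = -1$, hence $\left(\frac{5}{q}\right) = -1$, contradicting the assumption that $5$ is a quadratic residue modulo $q$.

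There is no real obstacle here: this is a textbook application of quadratic reciprocity, and it sits naturally alongside the factorisation theorem for primes in $\O$ cited earlier in the section (indeed, one could alternatively phrase the same proof by saying that $q\O$ is prime, so $(x+y\sqrt5)(x-y\sqrt5)\in q\O$ forces one of the factors, and hence both $x$ and $y$, to lie in $q\O$). I would present the Legendre symbol version because it is the most elementary and self-contained, requiring only a one-line invocation of reciprocity.
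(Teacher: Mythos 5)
Your proof is correct and follows essentially the same route as the paper: both reduce to showing that $5$ is not a quadratic residue modulo $q$ via quadratic reciprocity, forcing $q\mid y$ (and then $q\mid x$). Your version is slightly more explicit about the contradiction structure and the reduction $(xy^{-1})^2\equiv 5\pmod q$, but the substance is identical; the alternative phrasing via primality of $q\O$ that you mention in passing is also valid and connects nicely with the factorisation theorem quoted just before the lemma.
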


\begin{proof}
Observe that, since $q$ divides $n$, $q\mid x$ if and only if $q\mid y$. The equation $x^2-5y^2 \equiv 0\pmod{q}$ has a nonzero solution if and only if 5 is a quadratic residue. Applying quadratic reciprocity we get:
\[\left(\frac{5}q\right) = (-1)^{\frac{(5-1)(q-1)}4}\left(\frac{q}5\right) = \left(\frac{q}5\right)\]
hence we have a solution if and only if $q\equiv0,\pm1 \pmod 5$. On the other hand, if $q\equiv\pm2$, the only solution is trivial, therefore $q$ divides $x$ and $y$.
\end{proof}

\begin{prop}\label{p:pell-solutions}
Equation~\eqref{e:gpell} has a solution if and only if all prime factors of $n$ that are congruent to $\pm 2$ modulo $5$ appear with an even exponent.
\end{prop}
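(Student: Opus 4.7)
The plan is to translate the Diophantine question into one about elements of norm $n$ in $\mathbb{Z}[\sqrt{5}]\subset\mathcal{O}$, and then read off both directions from the splitting behaviour of rational primes in $\mathcal{O}$ recalled just before the statement. A pair $(x,y)\in\Z^2$ solves \eqref{e:gpell} if and only if the element $\alpha = x+y\sqrt{5}\in\mathbb{Z}[\sqrt{5}]$ has norm $N(\alpha)=n$, so the proposition becomes: $n$ is realised as a norm from $\mathbb{Z}[\sqrt{5}]$ iff every prime $q\equiv\pm2\pmod 5$ dividing $n$ appears with even exponent.

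For the necessary direction, assume $\alpha\in\mathbb{Z}[\sqrt 5]$ has $N(\alpha)=n$. The ideal $(\alpha)\subset\mathcal{O}$ has ideal norm $|n|$. By part (i) of the preceding theorem, each prime $q\equiv\pm2\pmod 5$ is inert, so $(q)$ is itself a prime ideal with norm $q^2$, and it is the unique prime of $\mathcal{O}$ lying over $q$. Consequently, in the prime ideal factorisation of $(\alpha)$, if $(q)$ occurs to the $k$-th power, then primes over $q$ contribute exactly $q^{2k}$ to $|n|$. Hence the exponent of $q$ in $n$ equals $2k$, which is even. (This also cleanly subsumes Lemma~\ref{l:qtoq2} and handles $q=2$ without a separate parity argument.)

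For the sufficient direction, write
\[
n=\epsilon\cdot 5^{c}\cdot\prod_i p_i^{a_i}\cdot\prod_j q_j^{2b_j},\qquad \epsilon=\pm 1,\ p_i\equiv\pm1\pmod 5,\ q_j\equiv\pm2\pmod 5,
\]
and build a solution factor by factor. Claim~\ref{c:integral} provides elements $\alpha_{p_i}\in\mathbb{Z}[\sqrt 5]$ with $N(\alpha_{p_i})=p_i$; the rational integer $q_j$ lies in $\mathbb{Z}[\sqrt 5]$ with $N(q_j)=q_j^2$; and $\sqrt 5\in\mathbb{Z}[\sqrt 5]$ has $N(\sqrt 5)=-5$. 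Set
\[
\beta=(\sqrt 5)^{c}\cdot\prod_i\alpha_{p_i}^{a_i}\cdot\prod_j q_j^{b_j}\in\mathbb{Z}[\sqrt 5],
\]
so $N(\beta)=(-1)^{c}|n|$. If $\epsilon(-1)^{c}=1$ take $\alpha=\beta$; otherwise multiply by the element $2+\sqrt 5=\phi^{3}\in\mathbb{Z}[\sqrt 5]$, which is a unit with inverse $-2+\sqrt 5$ and has norm $-1$. In either case $\alpha\in\mathbb{Z}[\sqrt 5]$ and $N(\alpha)=n$, yielding the desired pair $(x,y)$.

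The only real obstacle is that $\mathcal{O}^{*}$ contains a norm $-1$ unit ($\phi$) but $\phi\notin\mathbb{Z}[\sqrt 5]$, so a naive unit-correction step would drop us out of $\mathbb{Z}[\sqrt 5]$. Observing that $\phi^{3}=2+\sqrt 5$ does lie in $\mathbb{Z}[\sqrt 5]$ and still has norm $-1$ circumvents this, and everything else is a direct assembly from Claim~\ref{c:integral} and the splitting theorem.
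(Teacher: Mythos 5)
Your proof is correct, and it takes a genuinely different route from the paper's, most notably in the necessity direction. The paper argues by descent: it picks a minimal positive $n$ with an odd $p$-adic valuation, handles $p=2$ by working modulo $8$, and handles odd $p$ via Lemma~\ref{l:qtoq2} (quadratic reciprocity), reaching a contradiction by dividing out $p^2$. You instead pass to the ideal $(\alpha)\subset\mathcal{O}$ and use that inert primes $q\equiv\pm2\pmod5$ contribute $q^2$ to the ideal norm: the $q$-adic valuation of $|n|$ is then automatically even, with $q=2$ requiring no special treatment and Lemma~\ref{l:qtoq2} falling out as a byproduct. This is cleaner and more conceptual; the paper's version is more elementary (only quadratic reciprocity and parity mod $8$). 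In the sufficiency direction the paper reduces to the squarefree case via $(kx_0,ky_0)$, which is slightly shorter but leaves the sign of $n$ implicit, whereas your factor-by-factor construction $\beta=(\sqrt5)^c\prod\alpha_{p_i}^{a_i}\prod q_j^{b_j}$ together with the norm-$(-1)$ unit $\phi^3=2+\sqrt5\in\Z[\sqrt5]$ handles general $n$, including the sign and the ramified prime $5$, explicitly. Both arguments lean on the same two ingredients already set up in the paper -- the splitting theorem and Claim~\ref{c:integral} -- so the overall architecture is parallel, but your necessity step is a real improvement in uniformity.
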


\begin{proof}
We prove that the condition is necessary, first. Suppose that there is an integer $n$ and a prime $p\equiv\pm2\pmod5$ such that the $p$-adic valuation $v_p(n)$ is odd and~\eqref{e:gpell} has a solution $(x,y)$. Up to multiplying $x+y\sqrt5$ by $2+\sqrt5$, we can suppose that $n$ is positive, and that it is minimal among all positive integers having this property.

If $p=2$, then either $x$ and $y$ are both odd, or they are both even. In the first case, $n = x^2-5y^2 \equiv 1-5\equiv 4\pmod 8$, hence $v_2(n) = 2$, contradicting the assumption that $v_2(n)$ be odd; in the second case, both $x$ and $y$ are divisible by $2$, hence $n/4 = (x/2)^2-5(y/2)^2$ and $v_2(n/4) = v_2(n)-2$ is odd, contradicting the minimality of $n$.

If $p>2$ is an odd prime, Lemma~\ref{l:qtoq2} implies that $p$ divides both $x$ and $y$, hence $(x/p, y/p)$ is a solution of $(x/p)^2-5(y/p)^2 = n/p^2$, and $v_p(n/p^2) = v_p(n)-2$ is still odd, thus contradicting the minimality of $n$.

We now prove that the condition is sufficient. If $(x_0,y_0)$ is a solution of Equation~\hyperref[e:gpell]{$(\varspadesuit_n)$}, then $(kx_0, ky_0)$ is a solution of Equation~\hyperref[e:gpell]{$(\varspadesuit_{k^2n})$}. Therefore, it is enough to find a solution of Equation~\eqref{e:gpell} when $n$ is squarefree. In particular, $n$ is a product of distinct primes that are congruent to $0$ or $\pm1$ modulo 5.

For every such prime $p$ we have produced an algebraic integer $\alpha_p\in\Z[\sqrt5]$ such that $\alpha_p\cdot\overline\alpha_p = p$ (see Claim~\ref{c:integral}). The product $\alpha_n = \prod_{p\mid n}\alpha_p$, is an integer $\alpha_n\in\Z[\sqrt5]$ such that $\alpha_n\cdot\overline\alpha_n = n$. Writing $\alpha_n = x+y\sqrt5$, we have a solution $(x,y)$ of~\eqref{e:gpell}.
\end{proof}

Finally, we refine the last proposition to get coprime solutions. We will call $(x,y)$ a \e{coprime solution} of~\eqref{e:gpell} if $\gcd(x,y) = 1$.

\begin{prop}\label{p:coprime}
Equation~\eqref{e:gpell} has a coprime solution $(x,y)$ if and only if $n=an'$ where $a\in\{1,4,5,20\}$ and $n'$ has no prime factors congruent to $0$ or $\pm 2$ modulo $5$.
\end{prop}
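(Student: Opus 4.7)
The plan is to treat necessity and sufficiency separately: the first by a three-pronged local analysis at the primes $2$, $5$, and odd primes $\equiv \pm 2 \pmod 5$, and the second by an explicit construction of a coprime solution inside the ring of integers $\mathcal{O}$.

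For necessity, suppose $(x,y)$ is a coprime solution of $x^2 - 5y^2 = n$. I would rule out the forbidden types of prime factors of $n$ one at a time. First, if $p > 2$ is a prime with $p \equiv \pm 2 \pmod 5$ and $p \mid n$, Lemma~\ref{l:qtoq2} forces $p \mid x$ and $p \mid y$, contradicting $\gcd(x,y) = 1$; so $n$ has no such prime factor. Second, for the prime $2$: reducing modulo $4$ gives $n \equiv x^2 - y^2 \pmod 4 \in \{0,1,3\}$, already excluding $v_2(n) = 1$; and if $4 \mid n$ then $x \equiv y \pmod 2$, and both being even violates coprimality, so both are odd, whence $n \equiv 1 - 5 \equiv 4 \pmod 8$ and $v_2(n) = 2$. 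Hence $v_2(n) \in \{0,2\}$. Third, for $p = 5$: if $5 \mid n$ then $5 \mid x^2$, so $5 \mid x$; writing $x = 5x'$, we get $n/5 = 5x'^2 - y^2$, and $25 \mid n$ would imply $5 \mid y^2$, hence $5 \mid y$, contradicting coprimality. Therefore $v_5(n) \in \{0,1\}$. Combining the three constraints yields $n = an'$ with $a \in \{1,4,5,20\}$ and $|n'|$ a product of primes $\equiv \pm 1 \pmod 5$, as required.

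For sufficiency, I would build a coprime solution multiplicatively in $\mathcal{O}$. Given $n = an'$ of the prescribed form, for each prime $p \equiv \pm 1 \pmod 5$ with $p \mid n'$, Claim~\ref{c:integral} supplies an element $\alpha_p \in \Z[\sqrt 5]$ with $N(\alpha_p) = p$ generating one of the two prime ideals above $p$. Define
\[
\alpha_{n'} \;=\; \prod_{p \mid n'} \alpha_p^{v_p(|n'|)} \;\in\; \Z[\sqrt 5],
\]
so that $N(\alpha_{n'}) = |n'|$. Writing $\alpha_{n'} = x + y\sqrt 5$, I would then verify that $\gcd(x,y) = 1$: a common rational prime divisor $q$ of $x$ and $y$ forces $q \mid \alpha_{n'}$ in $\mathcal{O}$; if $q$ were inert or ramified, then $q^2$ would divide $|n'|$, which is impossible by the hypothesis on $n'$; and if $q$ splits as $\alpha_q \overline{\alpha_q}$, then both $(\alpha_q)$ and $(\overline{\alpha_q})$ would appear in the prime factorization of $(\alpha_{n'})$, contradicting its explicit expression as a product of the $\alpha_p$ alone (by unique factorization of ideals in the Dedekind domain $\mathcal{O}$).

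Finally, to realize the factor $a$ and the sign of $n$, I would multiply $\alpha_{n'}$ by suitable fixed elements of $\Z[\sqrt 5]$: $3 + \sqrt 5$ has norm $+4$, $1 + \sqrt 5$ has norm $-4$, $\sqrt 5$ has norm $-5$, the products realize norms $\pm 20$, and the unit $2 + \sqrt 5 \in \Z[\sqrt 5]$ of norm $-1$ toggles sign. In each case a direct coordinate computation shows that multiplication preserves coprimality; for example, multiplication by $3 + \sqrt 5$ sends $(x,y)$ to $(3x+5y, x+3y)$ and the relations $3(x+3y) - (3x+5y) = 4y$, $3(3x+5y) - 5(x+3y) = 4x$ reduce any common factor to a divisor of $4$, and the parity of $x,y$ (one odd, one even since $n'$ is odd) forces the output to be odd on both coordinates. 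The main obstacle is precisely this last step: checking case by case that the auxiliary multiplications never introduce a common factor. The saving grace is that $\alpha_{n'}$ has controlled residues — one coordinate odd and one even (since $n'$ is odd) and neither divisible by $5$ (since $5 \nmid n'$) — so the possible common factors $2$ and $5$ can be ruled out by hand in each of the four cases $a \in \{1,4,5,20\}$.
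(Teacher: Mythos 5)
Your argument is correct and takes essentially the same route as the paper: necessity by local analysis at $2$ and $5$ together with Lemma~\ref{l:qtoq2}, and sufficiency by multiplying the elements $\alpha_p\in\Z[\sqrt5]$ in $\O$ and invoking unique factorization to check coprimality, with explicit auxiliary elements to realize the factor $a$ and to fix the sign. One small slip in the final paragraph: you assert that neither coordinate of $\alpha_{n'}=x+y\sqrt5$ is divisible by $5$, but only $5\nmid x$ follows from $5\nmid n'$ (indeed $5\mid y$ can occur, e.g.\ $12^2-5\cdot5^2=19$); fortunately your coprimality checks for $a=5,20$ only ever use $5\nmid x$, so the proof is unaffected.
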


\begin{proof}
We first prove that, if $8$ divides $n$,~\eqref{e:gpell} has no coprime solutions. In fact, if there was such a solution, $x$ and $y$ would both be odd. Then
\[x^2-5y^2 \equiv 1-5\cdot1 = -4 \not\equiv 0 \pmod 8.\]
That is to say, if~\eqref{e:gpell} has a coprime solution, either $n$ is odd or $n=4m$ for some odd integer $m$, thanks to Proposition~\ref{p:pell-solutions}.

We now prove that, if $n=25n_1$, there are no coprime solutions. In fact, if 25 divides $n$, then $5$ divides $x$, hence $x=5x_1$. Dividing by $5$, we get the equation $5x_1^2-y^2 = 5n_1$, from which 5 divides $y$. This means that if~\eqref{e:gpell} has a coprime solution, 25 does not divide $n$.

On the other hand, there are coprime solutions when $n=4$ and $n=5$, namely $(3,1)$ and $(5,2)$.

Lemma~\ref{l:qtoq2} rules out all odd primes in the factorisation of $n$ that are congruent to $\pm2$ modulo 5.

We now prove that if $p\equiv\pm1\pmod5$ is a prime and $n=p^k$ is a power of $p$, then~\eqref{e:gpell} has a coprime solution: consider a fundamental solution $(x_1,y_1)$ for $p$, corresponding to the integer $\alpha_p = x_1 + y_1\sqrt5\in\O$ for $p$. Consider the solution $(x_k,y_k)$ corresponding to $\alpha_p^k$: $\gcd(x_k,y_k)$ divides $\alpha_p^k\overline\alpha_p^k = p^k$, hence is a power of $p$.
If $p = \alpha_p\overline\alpha_p$ divided $\gcd(x_k,y_k)$, though, then $\overline\alpha_p$ would divide $x_k$ and $y_k$, hence it would also divide $\alpha_p^k$, thus contradicting the fact that $\O$ is a UFD and that $\alpha_p$ and $\overline\alpha_p$ are coprime.

The same kind of argument shows that multiplying together fundamental solutions associated to each of the powers of primes
\[\{4,5\}\cup\{p^k\mid p\equiv\pm1\pmod5 , k\ge1\}\]
we get coprime solutions of the original equation.
\end{proof}

\begin{rmk}\label{r:omega}
We can actually say more, by looking at the proof of Proposition~\ref{p:coprime}. In what follows, we let $n = an'$ as above, and we denote with $\omega(n')$ the number of distinct prime factors of $n'$.

Since $\O$ is a PID, \emph{every} solution of~\eqref{e:gpell} is obtained by multiplying together fundamental solutions associated to the prime divisors of $n'$ and a solution of~\hyperref[e:gpell]{$(\varspadesuit_{a})$}. Now suppose that $p^2$ divides $n$ for some prime $p$, and that we choose to use \emph{both} fundamental solutions $\alpha_p$ and $\overline\alpha_p$ to produce a solution of~\eqref{e:gpell}. Recall that, by assumption, $p$ is an odd prime congruent to $\pm1$ modulo 5.

This means that we are considering the number $\alpha_n = \alpha_p\cdot\overline\alpha_p\cdot\beta$ for some $\beta\in\Z[\sqrt5]$.
But $\alpha_p\cdot\overline\alpha_p= p$, hence $\alpha_n = p\cdot\beta$, and if we write $\alpha_n = x+y\sqrt5$, then $x$ and $y$ are both divisible by $p$, and the solution we obtain is \e{not} coprime.

Therefore, if we want to obtain a coprime solution, the only choice we have is to use either $\alpha_p^v$ or $\overline\alpha_p^v$, where $v$ is the exponent of $p$ in the factorisation of $n$; moreover, since $p\neq5$, $\alpha_p$ and $\overline\alpha_p$ are coprime, hence distinct choices give distinct solutions. In particular, we obtain exactly $2^{\omega(n')}$ different solutions up to multiplication by $\pm\phi^{2h}$; up to conjugation and units, we get a set $\mathcal{F}_n$ comprising $\Omega := 2^{\omega(n')-1}$ solutions.

%Moreover, given any element $\beta=x+y\sqrt5\in\mathcal{F}_n$ and any integer $h$, we claim that:
%\begin{itemize}
%\item if $x,y$ are both odd, then $\pm\phi^{2h}\beta = x'+y'\sqrt5$ with $\gcd(x',y')|2$;
%\item if either $x$ or $y$ is even, then $\pm\phi^{6h}\beta$ is a coprime solution;
%\end{itemize}
%and every coprime solution is of the form above, up to conjugation.
%
%In fact, if $(x,y)$ is a coprime solution with $x,y$ both odd, then multiplying by $\phi^6$ yields $(9x+20y, 4x+9y)$, and one easily checks that $\gcd(9x+20y, 4x+9y) = 1$.
%
%Multiplying $(x,y)$ by $\phi^2$ yields the solution $(x',y') = \big(\frac{3x+5y}2,\frac{x+3y}2\big)$: if one among $x$ and $y$ is even, $(x',y')$ is not integral; if $x\equiv y \pmod 4$, then $x'$ and $y'$ are integers with $\gcd(x',y') = 2$; if $x\equiv -y\pmod 4$, then $(x',y')$ is a coprime solution.
%
%Finally, multiplying $(x,y)$ by $\phi^4$ yields $(x',y') =  \big(\frac{7x+15y}2,\frac{3x+7y}2\big)$: as above, if $x$ or $y$ is even, $(x',y')$ is not integral; if $x\equiv -y \pmod 4$, then $x'$ and $y'$ are integers with $\gcd(x',y') = 2$; if $x\equiv y\pmod 4$, then $(x',y')$ is a coprime solution.

Notice that, by construction, $\mathcal F_n$ has the property that every coprime solution of \eqref{e:gpell} differs from a solution in $\mathcal{F}_n$ by conjugation and multiplication by units.
\end{rmk}

\begin{defn}\label{d:fund_sols}
We call the set $\mathcal{F}_n$ above a \emph{generating set of solutions} of Equation~\eqref{e:gpell}.
\end{defn}

\subsection{Solving equation~\eqref{eq:pell}}
We now recall that we are actually looking for solutions of Equation~\eqref{eq:pell}
with $\gcd(a,b) = 1$.

\begin{defn}\label{d:correspond}
We say that a pair $(a,b)$ with $a < b$ of positive integers \e{corresponds to} an integer solution $(x,y)$ of the Pell equation $x^2 - 5y^2 = 4(2g-1)$ if $x = (7b - 2a)/3$ and $y = b$. In this case we also say that $(a,b)$ \e{corresponds} to the element $\zeta = x + y\sqrt{5} \in \O$.
\end{defn}

\begin{prop}\label{prop:abcoprime}
Let $\mathcal{F}_{2g-1}$ be a generating set of solutions of Equation~\hyperref[e:gpell]{$(\varspadesuit_{2g-1})$}, in the sense of Definition~\ref{d:fund_sols}.
If $(a,b)$ is a coprime solution of Equation~\eqref{eq:pell}, then $(a,b)$ corresponds to either $\pm2\phi^{2h}\beta$ or $\pm2\phi^{2h}\overline\beta$ for some $\beta\in\mathcal{F}_{2g-1}$.

Conversely, given a solution $\beta\in\mathcal{F}_{2g-1}$:
\begin{itemize}
\item if $g\equiv 0\pmod 3$, then both $\pm2\phi^{2h}\beta$ and $\pm2\phi^{2h}\overline\beta$ correspond to coprime solutions of~\eqref{eq:pell};

\item if $g\equiv 1\pmod3$, then either both $\pm2\phi^{2h}\beta$ and $\pm2\phi^{2h}\overline\beta$ correspond to coprime solutions of~\eqref{eq:pell} for all even values of $h$, or they both do for all odd values of $h$;

\item if $g\equiv 2\pmod3$, $3$ divides $2g-1$, and there are no coprime solutions of~\hyperref[e:gpell]{$(\varspadesuit_{2g-1})$}.
\end{itemize}
\end{prop}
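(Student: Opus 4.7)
The plan is to translate Equation~\eqref{eq:pell} into the norm equation $N(\alpha)=2g-1$ for a carefully chosen $\alpha\in\O$, using the identity $3+\sqrt5=2\phi^2$. Given a coprime pair $(a,b)$ satisfying~\eqref{eq:pell}, the integrality of $(7b-2a)/3$ forces $3\mid a+b$, hence also $3\nmid b$. Setting $d=(a+b)/3$ and $\alpha:=b\phi^2-d=(b-d)+b\phi\in\O$, a short computation with $\phi^2=1+\phi$ gives $2\alpha=(3b-2d)+b\sqrt5=\zeta$, so $N(\alpha)=\tfrac14 N(\zeta)=2g-1$. Writing $\alpha=A+B\phi$, the inverse correspondence is $(a,b)=(2B-3A,\,B)$, and $\gcd(a,b)=\gcd(3A,B)=\gcd(3,B)\cdot\gcd(A,B)$. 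Thus $(a,b)$ is coprime if and only if $\alpha$ is primitive in $\O$ (i.e.\ no rational prime divides $\alpha$) and $3\nmid B$.

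The first assertion then follows from a classification of primitive elements of $\O$ of norm $2g-1$. Since $\O$ is a PID, such primitive ideals correspond to the $2^{\omega(n')}$ choices of a prime of $\O$ above each split rational prime dividing $2g-1$, where $n'=(2g-1)/\gcd(2g-1,5)$; inert primes cannot divide $2g-1$ and $25\nmid 2g-1$, by the same reasoning underlying Proposition~\ref{p:coprime} (cf.\ Lemma~\ref{l:qtoq2}). The set $\mathcal F_{2g-1}$ contributes $2^{\omega(n')-1}$ representatives, and together with their conjugates they generate exactly these $2^{\omega(n')}$ ideals. Since the units of $\O$ of norm $+1$ are $\pm\phi^{2h}$, every primitive $\alpha\in\O$ of norm $2g-1$ is therefore of the form $\pm\phi^{2h}\beta$ or $\pm\phi^{2h}\overline\beta$ for some $\beta\in\mathcal F_{2g-1}$, establishing the existence part of the proposition.

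For the converse, given $\beta\in\mathcal F_{2g-1}$ and $\zeta=\pm2\phi^{2h}\beta$ (or the analogous expression with $\overline\beta$), the associated $\alpha$ is automatically primitive of norm $2g-1$, so only the condition $3\nmid B$ is at stake. Since $3$ is inert, $\O/3\O\cong\F_9=\F_3[\phi]$, whose multiplicative group is cyclic of order $8$ generated by $\phi$; the condition $3\mid B$ means precisely that $\alpha$ maps into the subfield $\F_3\subset\F_9$, whose multiplicative part is $\langle\phi^4\rangle$, an index-$2$ subgroup of the norm-one subgroup $\langle\phi^2\rangle$ of $\F_9^\ast$. Distinguishing by $N(\beta)\equiv 2g-1\pmod3$:
\begin{itemize}
\item If $g\equiv2\pmod3$, then $3\mid N(\beta)$ forces $3\mid\beta$ in $\O$ by inertness, contradicting primitivity; so $\mathcal F_{2g-1}=\emptyset$.
\item If $g\equiv0\pmod3$, then $N(\beta)\equiv2\pmod3$ places $\beta$ among the odd powers of $\phi$ in $\F_9^\ast$. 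Multiplication by $\phi^{2h}$ preserves the parity of the exponent, keeping $\phi^{2h}\beta$ outside $\F_3$, so $3\nmid B$ for every $h$. The same holds for $\overline\beta$, because the Frobenius $x\mapsto x^3$ sends odd powers of $\phi$ to odd powers.
\item If $g\equiv1\pmod3$, then $\beta\in\langle\phi^2\rangle$, and $\phi^{2h}\beta\in\F_3^\ast$ if and only if $\phi^{2h}$ and $\beta$ lie in the same coset of $\langle\phi^4\rangle$ in $\langle\phi^2\rangle$, a condition depending only on the parity of $h$. Exactly one parity of $h$ gives $3\mid B$, the other gives coprime $(a,b)$. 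Moreover Frobenius sends $\phi^{2j}$ to $\phi^{6j}=\phi^{2j}\cdot\phi^{4j}$, which lies in the same coset of $\langle\phi^4\rangle$, so $\beta$ and $\overline\beta$ determine the same parity.
\end{itemize}

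The main subtlety I foresee is the coset bookkeeping in the $g\equiv1$ case: verifying that exactly one parity class of $h$ yields coprime pairs and that $\beta$ and $\overline\beta$ select the \emph{same} parity class. Once this is in place, the rest of the proof is a direct synthesis of the PID structure of $\O$, the description of units of norm $+1$, and the arithmetic of the residue field $\F_9$.
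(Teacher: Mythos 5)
Your proof is correct, and it is a genuine (if mild) variant of the paper's argument. Both proofs rest on the same arithmetic of $\O=\Z[\phi]$; the difference is in the bookkeeping. The paper stays in $\Z[\sqrt5]$, works with $\zeta=x+y\sqrt5$ of norm $4(2g-1)$, establishes $\gcd(a,b)=1\iff\gcd(x,y)\in\{1,2\}$ and $3\nmid y$, splits into the cases $\gcd(x,y)=1$ (both odd) and $\gcd(x,y)=2$ to produce an auxiliary $\beta_i\in\Z[\sqrt5]$ of norm $2g-1$, and then does the mod-$3$ analysis by elementary quadratic-residue considerations on $N(\zeta)\pmod 3$. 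You instead pass directly to $\alpha=\zeta/2=(b-d)+b\phi\in\O$ of norm $2g-1$ (exploiting $3+\sqrt5=2\phi^2$), phrase the coprimality criterion as ``$\alpha$ primitive and $3\nmid B$'', and do the mod-$3$ analysis in the residue field $\F_9=\O/3\O$, using that $\F_9^\ast=\langle\phi\rangle$, $\F_3^\ast=\langle\phi^4\rangle$, and the norm-one subgroup is $\langle\phi^2\rangle$. This buys a cleaner and more systematic treatment of the $g\equiv 1\pmod 3$ case (your coset argument verifies explicitly that $\beta$ and $\overline\beta$ select the same parity of $h$, a point the paper passes over lightly), at the cost of having to identify, as you do implicitly, primitive elements of $\O$ of norm $2g-1$ up to units and conjugation with $\mathcal F_{2g-1}\subset\Z[\sqrt5]$ via the trick of Claim~\ref{c:integral}. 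One small slip: the displayed identity $\gcd(3A,B)=\gcd(3,B)\cdot\gcd(A,B)$ is not universally valid (it fails when $3\mid\gcd(A,B)$); what you actually need and use is the correct equivalence $\gcd(3A,B)=1\iff\gcd(A,B)=1\text{ and }3\nmid B$, so the conclusion stands, but the intermediate step should be reworded.
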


The second half of the statement above can be thought of in the following way. Let $g$ be a positive integer such that $2g-1$ is either $n'$ or $5n'$, where $n'$ is a product of primes congruent to $\pm1$ modulo 5. There are $\Omega=2^{\omega(n')-1}$ families of solutions of Equation~\eqref{eq:pell}, and in each family any two members differ, up to sign and conjugation, by a power of $\phi^2$; if $g\equiv1\pmod 3$, this power is always an even power, i.e.~a power of $\phi^4$.

\begin{proof}
We claim that if $(a,b)$ is a solution of~\eqref{eq:pell} that corresponds to $(x,y)$, then $\gcd(a,b)=1$ if and only if $\gcd(x,y)$ is either 1 or 2, and 3 does not divide $y$.

In fact, if $(a,b)$ satisfies~\eqref{eq:pell} and it corresponds to $(x,y)$, $y=b$ is an integer, and $x^2 = 4(2g-1)+5y^2$, hence $x$ is an integer, too. That is, $3$ divides $3x=7b-2a$. Moreover,
\begin{equation}\label{e:gcds2}
\gcd(x,y) \mid \gcd(3x,y) = \gcd(7b-2a,b) = \gcd(2a,b) \mid 2\gcd(a,b)
\end{equation}
therefore, if $\gcd(a,b)=1$, $3\nmid y$ and then $\gcd(x,y)$ divides $2$.

Conversely, observe that from~\eqref{e:gcds2} it follows that if $3$ divided $y$, then $3$ would divide $\gcd(a,b)$, hence $a$ and $b$ would not be coprime.
Therefore, $\gcd(x,y) = \gcd(3x,y)$.
Moreover:
\begin{itemize}
\item[(i)] if $\gcd(x,y) = 1$, it follows from~\eqref{e:gcds2} that $b$ is odd, and therefore $\gcd(2a,b) = \gcd(a,b) = 1$;

\item[(ii)] if $\gcd(x,y) = 2$, then from~\eqref{e:gcds2} we obtain that $b=y$ is even.
Parity considerations in equation~\hyperref[e:gpell]{$(\varspadesuit_{4(2g-1)})$} show that $x$ is even, too, and hence $(x,y) = (2x_0,2y_0)$, where $x_0^2-5y_0^2 = 2g-1$.
We now have $(a,b) = (\pm(7y_0-3x_0),2y_0)$, from which $\gcd(a,b) = 1$, as desired.
\end{itemize}

We now set out to prove that there exists $\beta\in\mathcal F_{2g-1}$ such that $(a,b)$ corresponds to $2\beta$, up to signs and powers of $\phi^2$.

Let us look at case (i) first.
Given coprime integers $x$, $y$ with $x^2-5y^2 = 4(2g-1)$ such that $3\nmid y$, we obtain the pair $(a,b)$ with $a=\pm(7y-3x)/2$.
As noted above $y=b$ is odd, and parity considerations imply that so is $x$.
Hence $a$ is automatically an integer.
Notice that, since $x$ and $y$ are both odd, $x+y\sqrt5=2\beta'$ for some $\beta'\in\O$.
Similarly as in Claim~\ref{c:integral}, there exists $\beta_1 = x_1+y_1\sqrt 5 \in\Z[\sqrt5]$ such that $(a,b)$ corresponds to $2\phi^{\pm2}\cdot\beta_1$.

In case (ii), we obtained two coprime integers $x_0$, $y_0$ such that $x_0^2-5y_0^2 = 2g-1$; call $\beta_2 = x_0+y_0\sqrt5$

In either case, by definition, $\beta_i$ ($i=1,2$) differs by an element in $\mathcal F_{2g-1}$ by conjugation and a power of $\phi^2$.

To prove the second part of the statement, we need to understand in which cases the second component of $(x,y)$ is not divisible by 3, i.e. when $3|y$ in a coprime solution $(x,y)$ of~\hyperref[e:gpell]{$(\varspadesuit_{4(2g-1)})$}.

Suppose that $x+y\sqrt5$ has norm $N(x+y\sqrt5) \equiv 2\pmod 3$: since 2 is not a quadratic residue modulo 3, then 3 does not divide $y$. On the other hand, suppose that $N(x+y\sqrt5) \equiv 1\pmod 3$: in this case there are solutions with $y\equiv0\pmod3$, and this subset of solutions is acted upon by $\phi^4=\frac{7+3\sqrt5}2$. Likewise, the subset of solutions with $y\not\equiv 0\pmod3$ is acted upon by $\phi^4$, and multiplication by $\phi^2$ takes one family to the other.
\end{proof}

%\begin{prop}\label{prop:abcoprime}
%Let $\mathcal{F}$ be a generating set of solutions of Equation~\hyperref[e:gpell]{$(\varspadesuit_{2g-1})$}, in the sense of Definition~\ref{d:fund_sols}.
%\marginM{I think the referee is right, and I have replaced $4(2g-1)$ with $2g-1$ here.}
% If $(a,b)$ is a coprime solution of Equation~\eqref{eq:pell}, then $(a,b)$ corresponds to either $\pm2\phi^{2h}\beta$ or $\pm2\phi^{2h}\overline\beta$ for some $\beta\in\mathcal{F}$.
%
%Conversely, given a solution $\beta\in\mathcal{F}$:
%\begin{itemize}
%\item if $g\equiv 0\pmod 3$, then both $\pm2\phi^{2h}\beta$ and $\pm2\phi^{2h}\overline\beta$ correspond to coprime solutions of~\eqref{eq:pell};
%
%\item if $g\equiv 1\pmod3$, then either both $\pm2\phi^{2h}\beta$ and $\pm2\phi^{2h}\overline\beta$ correspond to coprime solutions of~\eqref{eq:pell} for all even values of $h$, or they both do for all odd values of $h$;
%
%\marginM{Added third case here.}
%\item if $g\equiv 2\pmod3$, 3 divides $2g-1$, and there are no coprime solutions of~\hyperref[e:gpell]{$(\varspadesuit_{2g-1})$}.
%\end{itemize}
%\end{prop}
%
%The second half of the statement above can be thought of in the following way. Let $g$ be a positive integer such that $2g-1$ is either $n'$ or $5n'$, where $n'$ is a product of primes congruent to $\pm1$ modulo 5. There are $\Omega=2^{\omega(n')-1}$ families of solutions of Equation~\eqref{eq:pell}, and in each family any two members differ, up to sign and conjugation, by a power of $\phi^2$; if $g\equiv1\pmod 3$, this power is always an even power, i.e.~a power of $\phi^4$.

\begin{rmk}\label{r:uniqueness}
If we also assume that $2g-1$ is a power of a prime, $\Omega = 1$, hence for any two coprime solutions $(a,b)$, $(a',b')$ of Equation~\eqref{eq:pell} there is an integer $h$ such that the corresponding elements $\zeta, \zeta'\in \O$ satisfy $\zeta = \pm\phi^{2h}\zeta'$ or $\overline\zeta = \pm\phi^{2h}\zeta'$. Moreover, if $g\equiv 1\pmod3$, then $h$ is even.
\end{rmk}

Recall that we are interested in triangular numbers, which correspond to genera of smooth plane curves, and that we are going to construct curves of triangular genus (Theorem~\ref{t:construction}).

\begin{ex}
Let $k$ be an integer, and define $g$ as $g:=k(k-1)/2$ and $n := 2g - 1 = k(k-1)-1$. Notice that, by Proposition~\ref{p:coprime}, $n$ has no prime factors congruent to $\pm2$ modulo 5, since $(2k-1,1)$ is a coprime solution of Equation~\hyperref[e:gpell]{$(\varspadesuit_{4n})$}.

For small triangular numbers, the size $\Omega$ of the generating set of solutions is often 1. In fact, the smallest case in which $\Omega>1$ is $g = 105$, when $2g-1 = 209 = 11\cdot 19$; we then have two integers $\alpha,\beta\in\O$ with $N(\alpha) = 11$, $N(\beta) = 19$, and these give rise to the two solutions $\alpha\cdot\beta$ and $\alpha\cdot\overline\beta$, and these correspond to the two elements of the generating set of solutions of~\hyperref[e:gpell]{$(\varspadesuit_{2g-1})$}.
\end{ex} % pell stuff
\section{Examples and applications}\label{s:examples}

In this section we will construct examples and prove Theorem~\ref{t:construction}. Unless otherwise stated, curves will be of genus $g\ge 1$.

\subsection{Cremona transformations and the proof of Theorem~\ref{t:construction}}\label{ss:crem}

In this subsection we examine which solutions of the generalized Pell equation~\eqref{eq:pell} are realizable by $1$-unicuspidal curves.

We use a construction due to Orevkov~\cite{Or} (see also~\cite[Proposition 9.12]{BHL}). 
Let $N$ be a nodal cubic; denote by $B_1$ and $B_2$ the two smooth local branches at the node.
We define a birational transformation $f^N_1: \CP^2 \dashrightarrow \CP^2$ as follows. 
Blow up seven points infinitely close to the node of $N$ at branch $B_1$ (resulting in a chain of seven exceptional divisors $E_1, \dots, E_7$, where the divisors are indexed by the order of appearance); then, in the resulting configuration of divisors, blow down the proper transform of $N$ and six more exceptional divisors ($E_1, \dots, E_6$ in this order). 
The birational map $f^N_2$ is defined analogously, only we blow up at points on the branch $B_2$ instead of $B_1$. In both cases, after the last blowdown, the image of the 
exceptional divisor appearing at the last blowup (that is, $E_7$) is a nodal cubic~\cite[Section 6]{Or}: we denote them as $N^1$ and $N^2$ respectively, and we say that they are \e{associated} to $f^N_1$ and $f^N_2$, respectively, and each of them is \e{associated} to $N$.

To state the main technical result of this section, we introduce some terminology.
In this section, we want to allow the concept of a ``smooth Puiseux pair'', by formally allowing pairs $(1,b)$ for $b>1$; notice that this is not a Puiseux pair in the usual sense, since it corresponds to a smooth point on the curve. 
By extension, a smooth curve will be a $(1,b)$-unicuspidal curve for every $b>1$.

\begin{defn}\label{d:sweep}
Let $N$ be a nodal cubic with node $p$, and denote with $B_1$, $B_2$ the two branches of $N$ at the point $p$. We say that an $(a,b)$-unicuspidal curve $C$ of degree $d$ \emph{sweeps} $N$ if:
\begin{enumerate}
\item the cusp of $C$ is at $p$;
\item the branch of $C$ at $p$ has intersection multiplicity $a$ with $B_1$ and $b$ with $B_2$;
\item the only intersection point of $C$ and $N$ is $p$.
\end{enumerate}
\end{defn}

Notice that B\'ezout's theorem implies that, assuming (1) and (2) in the definition above, (3) is equivalent to the condition $a+b = 3d$; when $C$ is smooth, i.e. when $a=1$, this determines $b=3d-1$.

Finally, observe that $C$ defines an ordering of the two branches of $N$ at the node, where the first branch is the one with the lower multiplicity of intersection with $C$.

Now we set up some notation and recall some classical facts about how Puiseux pairs behave under blowing up and blowing down.

Given two curves $C_1$ and $C_2$ in a surface $X$ and a point $r\in X$, denote with $(C_1 \cdot C_2)_r$ the local intersection multiplicity of $C_1$ and $C_2$ at $r$. We use the same notation when $C_1$ and $C_2$ are just local curve branches rather than curves.

\begin{lemma}[{\cite[Theorem 3.5.5]{Wall},~\cite[Proposition 3.1]{Or}}]
Let $\sigma: X \rightarrow \C^2$ be the blowup of $\C^2$ at a point $q$ with exceptional divisor $E$; let $C$ and $B$ be a local irreducible singular branch and a smooth local branch at $q$ respectively, with strict transforms $\widetilde{C}$ and $\widetilde{B}$; let $T$ be a smooth curve branch in $X$ intersecting $E$ transversely. Denote with $p$ the intersection point of $\widetilde{C}$ with $E$.

If $C$ has one Puiseux pair $(a,b)$ at $q$, then $\widetilde{C}$ has a singularity at $p$ of type:
\begin{itemize}
\item $(a, b-a)$, if $b \geq 2a$;
\item $(b-a, a)$, if $b < 2a$.
\end{itemize}
In both cases, $(E \cdot \widetilde{C})_p = a$ and $(\widetilde{B} \cdot \widetilde{C})_p = (B \cdot C)_q - a$.

If $\widetilde{C}$ has one Puiseux pair $(a,b)$ at $p$, then $C$ has a singularity at $q$ of the following type:
\begin{itemize}
\item one Puiseux pair $(a, a+b)$, if $(E \cdot \widetilde{C})_p = a$;
\item one Puiseux pair $(b, a+b)$, if $(E \cdot \widetilde{C})_p = b$;
\item two Puiseux pairs, if $a < (E \cdot \widetilde{C})_p < b$.
\end{itemize}
In any case, $(T \cdot \widetilde{C})_p + (E \cdot \widetilde{C})_p = (\sigma(T) \cdot C)_q$.

When $\widetilde{C}$ is smooth (that is, when $a = 1$), then in case of $(E\cdot\widetilde{C})_p = a = 1$, $C$ is also smooth (consistently with the formal fact that its ``Puiseux pair'' begins with $a=1$); and in case of $(E\cdot\widetilde{C})_p = b > 1$, $C$ is singular and has singularity type $(b, a+b) = (b, b+1)$.
\end{lemma}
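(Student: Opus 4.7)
The plan is to work with explicit Puiseux parametrizations in the two standard charts of the blow-up. In suitable local coordinates at $q$, a one-Puiseux-pair singularity of type $(a,b)$ can be written as $t\mapsto(t^a,t^b)$ with $1\le a<b$ and $\gcd(a,b)=1$, and similarly for $\widetilde{C}$ at $p$. The exceptional divisor $E$ will play the role of either a coordinate axis or, in the third case of the blow-down, a more general smooth curve tangent to $\widetilde{C}$, after a suitable change of coordinates.

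For the blow-up direction, I would use the chart $\sigma(u,v)=(u,uv)$ in which $E=\{u=0\}$. The strict transform $\widetilde{C}$ is then parametrized by $t\mapsto(t^a,t^{b-a})$, obtained by dividing the second coordinate of the original parametrization by $u=t^a$. Coprimality $\gcd(a,b-a)=\gcd(a,b)=1$ is automatic, and the case split $b\ge 2a$ versus $b<2a$ corresponds to whether $(a,b-a)$ is already an ordered Puiseux pair or needs to be swapped into $(b-a,a)$. The identity $(E\cdot\widetilde{C})_p = \operatorname{ord}_{t=0}(t^a)=a$ is immediate. The formula $(\widetilde{B}\cdot\widetilde{C})_p = (B\cdot C)_q - a$ then follows from the standard blow-up intersection identity $(B\cdot C)_q = (\widetilde{B}\cdot\widetilde{C})_p + m_q(B)\cdot m_q(C)$, using $m_q(B)=1$ and $m_q(C)=a$.

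For the blow-down direction, parametrize $\widetilde{C}$ as $s\mapsto(s^a,s^b)$ at $p$; the three cases are distinguished by the position of $E$ relative to the tangent cone of $\widetilde{C}$. When $(E\cdot\widetilde{C})_p = a$ we can arrange $E=\{u=0\}$, and the blow-down $(u,v)\mapsto(u,uv)$ gives $C$ parametrized by $(s^a,s^{a+b})$, yielding Puiseux pair $(a,a+b)$. When $(E\cdot\widetilde{C})_p = b$ we arrange $E=\{v=0\}$, and the blow-down $(u,v)\mapsto(uv,v)$ gives $C$ parametrized by $(s^{a+b},s^b)$, yielding Puiseux pair $(b,a+b)$ after reordering. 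The identity $(T\cdot\widetilde{C})_p + (E\cdot\widetilde{C})_p = (\sigma(T)\cdot C)_q$ follows in all three cases from the blow-up intersection formula applied to $\sigma(T)$ and $C$, together with the general blow-up fact $m_q(C) = (E\cdot\widetilde{C})_p$. The degenerate case $a=1$ (smooth $\widetilde{C}$) is a direct specialisation, giving a smooth $C$ when $(E\cdot\widetilde{C})_p=1$ and a $(b,b+1)$-cusp when $(E\cdot\widetilde{C})_p=b$.

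The main obstacle is the third case, $a < (E\cdot\widetilde{C})_p < b$. Here $E$ is tangent to $\widetilde{C}$ but osculates it only to intermediate order $m:=(E\cdot\widetilde{C})_p$; after straightening $E$ to $\{v=0\}$ by a local diffeomorphism, the strict transform acquires a parametrization of the form $s\mapsto(s^a+\cdots, s^m+\cdots)$, and blowing down via $(u,v)\mapsto(uv,v)$ yields $s\mapsto(s^{a+m}+\cdots, s^m+\cdots)$ for $C$. The delicate step is to verify, by careful bookkeeping of the Puiseux expansion and its characteristic exponents, that $C$ has exactly two Puiseux pairs and no further characteristic terms. Since this case is not needed for the Cremona constructions of Section~\ref{ss:crem} (where only the first two cases are used), I would either carry out the details along the lines of \cite[Theorem 3.5.5]{Wall} or simply cite that reference for the full statement.
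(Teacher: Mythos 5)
The paper states this lemma by citation only (to Wall and to Orevkov) and gives no proof of its own, so there is no argument in the text to compare against directly. Your sketch is a correct reconstruction of the standard chart-computation proof and matches the approach taken in those references: parametrize the branch as $t\mapsto(t^a,t^b)$, use the charts $(u,v)\mapsto(u,uv)$ and $(u,v)\mapsto(uv,v)$, and apply the elementary intersection formula $(B\cdot C)_q=(\widetilde B\cdot\widetilde C)_p+m_q(B)\,m_q(C)$ together with the identity $m_q(C)=(E\cdot\widetilde C)_p$. The first two blowdown cases and the smooth degenerate case are handled correctly, and deferring the third case to Wall is exactly what the paper implicitly does.

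One small observation that would tighten the third case if you do choose to carry it out: when $a<m:=(E\cdot\widetilde C)_p<b$ one necessarily has $m=ka$ for some integer $2\le k<b/a$. Indeed, after reducing $E$ to the graph $\{v=f(u)\}$ with $f(u)=c_ku^k+\cdots$, the intersection multiplicity is $\mathrm{ord}_t\bigl(t^b-f(t^a)\bigr)=\min(b,ka)$, and the value $b$ only occurs if $f$ vanishes to sufficiently high order. Hence in the intermediate case the downstairs parametrization $s\mapsto\bigl(s^{a+m}h(s),\,s^mh(s)\bigr)$ with $h(0)\ne0$ has multiplicity $m$ and first essential gcd $\gcd(a+m,m)=a>1$; the term of order $b$ in $s^mh(s)$ (coming from the original $t^b$, with $\gcd(a,b)=1$) then brings the gcd down to $1$ in one more step, giving exactly two Puiseux pairs. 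This is precisely the ``careful bookkeeping'' you flagged, and since $a\mid m$ is the mechanism that forces the extra characteristic pair, it is worth stating explicitly.
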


The main technical result of this section is the following:

\begin{prop}\label{prop:orevkovtrick}
Let $C$ be an $(a,b)$-unicuspidal curve of genus $g$ and degree $d = (a+b)/3$ that sweeps the nodal cubic $N$; suppose that $(a,b)$ corresponds to $\zeta = x + y\sqrt{5}\in\O$ (in the sense of Definition~\ref{d:correspond}). Denote with $f^N_1, f^N_2$ the rational transformations associated to $N$, and let $N^1$ and $N^2$ be the associated nodal cubics, as described above; here we require that the labelling of the branches is the one induced by $C$. Then:
\begin{enumerate}
\item $f^N_1(C)$ is a $(b, 7b-a)$-unicuspidal curve of genus $g$ sweeping $N^1$; moreover, $(b,7b-a)$ corresponds to $\zeta \phi^4$;

\item if $b < 7a$, $f^N_2(C)$ is a $(7a-b, a)$-unicuspidal curve of genus $g$ sweeping $N^2$; moreover, $(7a - b, a)$ corresponds to $\zeta \phi^{-4}$;

\item if $b > 7a$, $f^N_2(C)$ is a $(b-7a, 7b-48a)$-unicuspidal curve of genus $g$ sweeping $N^2$; moreover, $(b - 7a, 7b - 48a)$ corresponds to $\overline{\zeta \phi^{-12}}$.
\end{enumerate}
\end{prop}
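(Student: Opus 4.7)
The plan is to track the strict transform of $C$ through the seven blowups and seven blowdowns that define each $f^N_i$, using the blowup/blowdown lemma for Puiseux pairs stated just above the proposition, and then verify the correspondence with $\zeta\phi^{\pm 4}$ by direct algebraic computation. Choose local coordinates at the node $p$ of $N$ so that $B_1 = \{x=0\}$ and $B_2 = \{y=0\}$, and parametrize $C$ near $p$ as $t \mapsto (t^a, t^b + \ldots)$; the sweeping assumption then gives $(C \cdot B_1)_p = a$ and $(C \cdot B_2)_p = b$. The geometric genus is automatically preserved since each $f^N_i$ is birational, and once the new Puiseux pair is determined the new degree follows from the sweeping identity $a'+b' = 3d'$ (which in turn forces the image to sweep $N^i$ by B\'ezout). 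For part (1), the first blowup separates the branches of $N$: $\widetilde C$ and $\widetilde B_2$ meet $E_1$ at a common point $r_1$ (because $C$ had higher contact $b$ with $B_2$), while $\widetilde B_1$ meets $E_1$ at a distinct point $q_1$. The remaining six blowups of $f^N_1$ occur infinitely near $q_1$ along $B_1$, away from $r_1$, so $\widetilde C$ is untouched and still has Puiseux pair $(a,b-a)$ or $(b-a,a)$ at $r_1$. During the blowdowns $N', E_1, \dots, E_6$ the image of $\widetilde C$ acquires a new cusp as the chain collapses through $r_1$; iterating the blowdown direction of the Puiseux pair lemma yields the singularity type $(b, 7b-a)$ and hence $d' = (8b-a)/3$.

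For parts (2) and (3) the blowups follow $B_2$, which now carries the higher contact with $C$. After the first blowup, $\widetilde C$ sits at $r_1 = \widetilde B_2 \cap E_1$, and each subsequent blowup of $f^N_2$ is again at a point through which the current strict transform of $C$ passes, as long as $\widetilde C^{(k)}$ and $\widetilde B_2^{(k)}$ still meet, i.e.\ while $b - ka > 0$. In case (2), where $b < 7a$, the two branches separate at some step $k \leq 7$, after which $C$'s strict transform is undisturbed by the remaining blowups; a short induction using the blowup lemma then gives Puiseux pair $(7a-b, a)$ at the separation point (smooth if $7a-b=1$), and the blowdowns preserve this because only one end of the exceptional chain then carries $\widetilde C$. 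In case (3), where $b > 7a$, the curve $C$ remains tangent to $B_2$ through all seven blowups: the Puiseux pair at step $k$ is $(a, b-ka)$, so $\widetilde C^{(7)}$ has Puiseux pair $(a, b-7a)$ at a point on $E_7$ that also lies on $N'$. Now the blowdowns genuinely modify the cusp, because $N'$ connects both ends of the exceptional chain and $\widetilde C^{(7)}$ meets $N'$ with multiplicity $b-7a$; careful iteration of the blowdown lemma through $N', E_1, \dots, E_6$ produces the Puiseux pair $(b-7a, 7b-48a)$.

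Finally, we verify algebraically that the stated transformations on Puiseux pairs correspond to multiplication by $\phi^{\pm 4}$ (respectively, to $\overline{\phantom{\zeta}\!\!}\,\phi^{-12}$ in case (3)) on $\zeta = (7b-2a)/3 + b\sqrt 5$. Using $\phi^4 = (7+3\sqrt 5)/2$, a direct computation gives
\[
\zeta\phi^4 = \tfrac{1}{2}\bigl((7x+15y) + (3x+7y)\sqrt 5\bigr) = \tfrac{47b-7a}{3} + (7b-a)\sqrt 5,
\]
which matches the element corresponding to $(a',b') = (b, 7b-a)$. Cases (2) and (3) are analogous; in case (3) the conjugation reflects that the smaller Puiseux parameter $b-7a$ ends up aligned with the $b$-coordinate rather than the $a$-coordinate of the new singularity. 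The main obstacle will be the bookkeeping in case (3): one must carefully iterate the blowdown lemma through the full seven-step blowdown chain and keep track of intersection multiplicities at each stage, verifying that the cumulative effect gives the coefficient $-48 = -7\cdot 7 + 1$ in the formula for the large Puiseux parameter $7b-48a$.
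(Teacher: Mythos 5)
Your overall strategy is the same as the paper's: track the Puiseux pair and the relevant local intersection multiplicities of the strict transform of $C$ through the seven blowups and seven blowdowns defining $f^N_i$, using the blowup/blowdown lemma for Puiseux pairs, and then verify the claimed correspondence with $\zeta\phi^{\pm4}$ and $\overline{\zeta\phi^{-12}}$ by direct computation in $\Z[\sqrt5]$. Your algebraic verification in case (1) is correct, and your description of the blowup phase in all three cases is broadly right.

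However, your description of case (2) contains a genuine error that would derail the proof if followed literally. You claim that the strict transform of $C$ already has Puiseux pair $(7a-b,a)$ after the seven blowups, and that the blowdowns ``preserve this because only one end of the exceptional chain then carries $\widetilde C$.'' Neither claim holds. Since $\phi^4<b/a<7$, the pair at step~$6$ is $(b-6a,\,a)$ with $b-6a<a$, and at step~$6$ the strict transform of $C$ is tangent to $E_6$ (because $(E_6\cdot\widetilde C)=a>b-6a=\mathrm{mult}$); thus after the seventh blowup the cusp sits exactly at $E_6\cap E_7$, with Puiseux pair $(7a-b,\,b-6a)$ (not $(7a-b,a)$, and note $b-6a\neq a$ when $b<7a$). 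In particular $\widetilde C$ lies on $E_6$, which \emph{is} blown down, and it is precisely this last blowdown that transforms the pair from $(7a-b,\,b-6a)$ to $(7a-b,\,(7a-b)+(b-6a))=(7a-b,\,a)$. So the blowdowns do change the singularity type, and your argument would either get the wrong intermediate pair or fail to finish with the stated one. Your sketch of case (3) (and its split into $b<8a$ and $b\ge8a$) is left at the level of an outline, which you acknowledge, but the paper is hardly more explicit there; the substantive discrepancy is the erroneous claim in case (2).
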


\begin{proof}
We first observe that the genus is invariant under birational transformations, hence $f_i^N(C)$ has genus $g$ for $i=1,2$.

Using the previous lemma, it is not hard to follow what happens with a Puiseux pair $(a,b)$ under the blowups and blowdowns giving the birational transformations $f^N_1$, $f^N_2$.

Notice that we can assume the inequality $2a < b$ when we consider $f^N_1$ and inequality $13a < 2b$ when we consider $f^N_2$ (these are needed to conveniently analyse the occurring blowups and blowdowns).
We will argue that $b/a>\phi^4$; this is enough, since $\phi^4$ is larger than both $13/2$ and $2$.
In fact, $a + b = 3d$ because $C$ sweeps $N$; moreover, $(a, b)$ is a solution of the degree-genus formula~\eqref{eq:dg_one_p}, hence it also satisfies~\eqref{eq:pell}. In particular, the ratio $b/a$ is larger than $\phi^4$, as desired; this is represented in Figure~\ref{f:gammas}: $(a,b)$ belongs to the hyperbola $\gamma_0$, which lies above its asymptote $b/a = \phi^4$. Notice that here we are using the assumption $g\ge 1$, since in the rational case the hyperbola $\gamma_0$ lies \emph{below} its asymptote. (See Section~\ref{ss:3d} and Remark~\ref{rmk:g02ndprop}.)

Notice also that in the third case, i.e.~when applying $f^N_2$ to an $(a,b)$-unicuspidal curve with $b > 7a$, the proof differs slightly according to whether $b < 8a$ or $b \geq 8a$; in both cases, however, the final outcome is $(b - 7a, 7b - 48a)$.

For example, the proof of case (1) goes as follows. (For simplicity, through the whole series of blowups and blowdowns, we will use the same notation for a curve/branch and for its strict transform in the other surface.)
%Recall that during the construction of $f_1^N$, we first blow up seven infinitely close point to the branch $B_1$ of (the strict transform of) $N$, resulting in exceptional divisors $E_1,\dots, E_7$ in this order; then we blow down the strict transform of $N$, then $E_1, \dots, E_6$ in this order. The strict transform of $E_7$ will be the nodal cubic $N^1$. (For simplicity, through the whole series of blowups and blowdowns, we will use the same notation for a curve/branch and for its strict transform in the other surface.)
As $C$ with a cusp type $(a,b)$ sweeps the nodal cubic $N$ at the beginning, we have local intersection multiplicities at the node $(C\cdot B_1) = a$, $(C\cdot B_2) = b$. Thus, by Lemma 8.2, first bullet point, after the first blowup, $C$ has a cusp type $(a,b-a)$ (recall that we can assume $2a \leq b$), $(C\cdot E_1) = a$ and $(C\cdot B_2) = b-a$. The next $6$ blowups do not affect the cusp type and these local intersection multiplicities, as everything happens at the other branch $B_1$. Now when blowing down $N$, by Lemma 8.2, we get a singularity of type $(b-a,b)$, and local intersections $(C\cdot E_1) = a + b - a = b$, $(C\cdot E_7) = b-a$. Then, after blowing down $E_i$, $i=1\dots 6$, we have a cusp type $(b,(i+1)b-a)$ and local intersections $(C\cdot E_{i+1}) = b$, $(C\cdot E_7) = (i+1)b-a$. In particular, after the last blowdown, the strict transform of $E_7$ is a nodal cubic with node at the singular point of $C$ such that $C$ has intersection multiplicity $b$ with one branch and $7b-a$ with the other. We will call the first branch $B_1$, the second $B_2$ and (the strict transform of) the divisor $E_7$ we denote by $N^1$. We see that (the strict transform of) $C$ now sweeps the nodal cubic $N^1$ as claimed.

The proof of the remaining two cases is a similar step-by-step check of cusp types and local intersection multiplicities.
\end{proof}

For elements $\zeta = x + y\sqrt{5}$ of $\mathbb{Z}[\sqrt5]$ introduce the notation $\zeta^+ = x + |y|\sqrt{5}$. We have the following corollary of the proposition above.

\begin{lemma}\label{cor:allbuttwo}
Let $C$ be an $(a,b)$-unicuspidal curve of genus $g$ sweeping a nodal cubic $N$, and suppose that $(a,b)$ corresponds to $\zeta\in\mathbb{Z}[\sqrt5]$.
Then there exists $m$ such that for all $i\neq m-1,m$ there exists a $1$-unicuspidal curve of genus $g$ with the Puiseux pair corresponding to $(\zeta \phi^{4i})^+$.
Finally, if $b>7a$, we can choose $m=-1$.
\end{lemma}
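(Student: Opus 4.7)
The plan is to construct the required $1$-unicuspidal curves by iteratively applying the Cremona transformations $f_1^N$ and $f_2^N$ of Proposition~\ref{prop:orevkovtrick} starting at $C$. First, iterating $f_1$ produces, for every $i\ge 0$, a $1$-unicuspidal curve of genus $g$ whose singularity corresponds to $\zeta\phi^{4i}$. Since $\zeta = x + y\sqrt{5}$ and $\phi^4 = (7+3\sqrt{5})/2$ both have strictly positive coordinates in the basis $\{1,\sqrt{5}\}$, so does $\zeta\phi^{4i}$ for all $i\ge 0$; hence $(\zeta\phi^{4i})^+ = \zeta\phi^{4i}$ and all non-negative indices are covered.

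Next I would iterate $f_2$ and track the ratio $r_k = b_k/a_k$. In case~(2) of Proposition~\ref{prop:orevkovtrick} one has $r_{k+1} = 1/(7 - r_k)$, a strictly increasing M\"obius transformation on $(-\infty, 7)$ whose only fixed points are $\phi^{\pm 4}$, with $\phi^4$ repelling. Since $r_0 > \phi^4$ (the relevant branch of $\gamma_0$ lies above its asymptote $b = \phi^4 a$; see Section~\ref{ss:3d}), the sequence $r_k$ strictly increases and exceeds $7$ after finitely many steps. Let $K\ge 0$ be the smallest integer with $r_K > 7$, with the convention $K = 0$ when $r_0 > 7$. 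Case~(2) applies for $k = 0, \dots, K-1$, realizing indices $i = -1, \dots, -K$ with corresponding elements $\zeta\phi^{-4k}$, which have positive $y$-coordinate since they encode the valid Puiseux pairs $(a_k, b_k)$ with $b_k > 0$.

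At step $K$ the pair satisfies $b_K > 7 a_K$, so case~(3) applies and yields a curve whose singularity corresponds to $\overline{\zeta_K\phi^{-12}} = \overline{\zeta\phi^{-4(K+3)}}$. I would verify that this equals $(\zeta\phi^{-4(K+3)})^+$ by a direct sign computation: the condition $b_K > 7 a_K$ is equivalent to $x_K/y_K > 47/21$ for $\zeta_K = x_K + y_K\sqrt{5}$, and multiplying by $\phi^{-12} = 161 - 72\sqrt{5}$ produces $y$-coordinate $161 y_K - 72 x_K$, which is negative because $47/21 > 161/72$. So this case~(3) ``jump'' realizes $i = -(K+3)$ and, at this point, leaves only $i = -(K+1)$ and $i = -(K+2)$ uncovered.

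Iterating $f_1$ from this new curve yields elements $\overline{\zeta\phi^{-4(K+3+j)}}$ after $j$ steps. An induction based on the analogous sign argument (the product of an element with positive $x$- and negative $y$-coordinates by $\phi^{-4} = (7-3\sqrt{5})/2$, which also has positive $x$- and negative $y$-coordinate, again has negative $y$-coordinate) shows that each such product equals $(\zeta\phi^{-4(K+3+j)})^+$, so all indices $i \le -(K+3)$ are realized. Setting $m = -(K+1)$ then yields the main statement; the final clause is immediate, since $b > 7a$ for $C$ itself forces $K = 0$ and hence $m = -1$. The main obstacle will be the sign bookkeeping at and beyond the case~(3) transition: one has to confirm that once we enter the ``conjugate regime'', where the current element equals $\overline{\zeta\phi^{4i}}$ rather than $\zeta\phi^{4i}$, further applications of $f_1$ correspond to \emph{decreasing} rather than increasing the index $i$, so that the two tails glue together with exactly two missing values in between.
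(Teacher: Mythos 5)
Your proof is correct and follows essentially the same strategy as the paper: iterate $f_1^N$ for $i\ge 0$, reduce to the case $b>7a$ by iterating case~(2) of Proposition~\ref{prop:orevkovtrick} (you track the ratio $r_k$ under the M\"obius map $r\mapsto 1/(7-r)$, the paper equivalently observes that the integer $b-7a$ strictly increases), then use case~(3) as the ``jump'' that skips two indices before resuming $f_1$-iterations. Your explicit sign check with $\phi^{-12}=161-72\sqrt5$ is a useful verification of the identity $\overline{\zeta\phi^{-4(i+3)}}=(\zeta\phi^{-4(i+3)})^+$ that the paper asserts without elaboration.
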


\begin{proof}
Suppose for simplicity that $b>7a$; we will see later that this assumption is not very restrictive.
For each $i\ge0$ we are going to construct an $(a_i,b_i)$-unicuspidal curve $C_i$ and an $(a'_i, b'_i)$-unicuspidal curve $\overline C_i$, where $(a_i,b_i)$ corresponds to $(\zeta \phi^{4i})^+$ and $(a'_i,b'_i)$ corresponds to $(\zeta \phi^{-4(i+3)})^+$.

The first family of curves is constructed as follows.
Let $C_0 = C$, $N_0 = N$. For every $i\ge0$, define inductively $C_{i+1}= f^{N_i}_1(C_i)$ and let $N_{i+1} = N_i^1$ be the nodal cubic associated to $f_1^{N_i}$.
Then by Proposition~\ref{prop:orevkovtrick}(1), $C_i$ is an $(a_i, b_i)$-unicuspidal curve of genus $g$ with $(a_i,b_i)$ corresponding to $\phi^{4i}\zeta$.

The second family is constructed in an analogous way.
Define $\overline C_0 = f_2^{N}(C)$, $\overline N_0 = N^2$.
By Proposition~\ref{prop:orevkovtrick}(3), $\overline C_0$ is a $(b-7a, 7b-48a)$-unicuspidal curve.
Call $(a'_0,b'_0)$ the Puiseux pair $(b-7a, 7b-48a)$, which corresponds to $\overline{\zeta\phi^{-12}}$.

As above, for $i \geq 0$, define inductively $\overline{C}_{i+1} = f^{\overline N_i}_1(\overline{C}_{i})$ and let $\overline{N}_{i+1}=\overline N\vphantom{N_i}^1_i$ be the nodal curve associated to ${\overline{N}_i}$.
We remark here that we use the labelling of the branches of $\overline{N}_{i}$ induced by the curve $\overline{C}_i$.

Then, by Proposition~\ref{prop:orevkovtrick}(1), $\overline C_i$ is an $(a'_i, b'_i)$-unicuspidal curve of genus $g$, where $(a'_i,b'_i)$ corresponds to $\phi^{4i}\cdot\overline{\zeta\phi^{-12}}$; however, observe that
\[
\overline{\zeta\phi^{-12}}\cdot \phi^{4i} = \overline{\zeta\phi^{-12}}\cdot \overline{\phi^{-4i}} = \overline{\zeta\phi^{-4(i+3)}} = (\zeta\phi^{-4(i+3)})^{+}.
\]

This concludes the proof under the assumption $b>7a$; we prove now that this assumption is not needed.
To this end, assume that $b<7a$, and consider the curve $f_2^N(C)$; by Proposition~\ref{prop:orevkovtrick}(2), this is an $(a^*,b^*)$-unicuspidal curve of genus $g$, and $(a^*,b^*) = (7a-b,a)$ corresponds to $\phi^{-4}\zeta$.

As remarked in the proof of Proposition~\ref{prop:orevkovtrick}, the assumption $a+b=3d$ implies that $b/a > \phi^4> 41/6$, hence it is easy to see that $b^*-7a^* > b - 7a$.
An inductive argument shows that there exists an integer $m\le 0$ such that there exists an $(a^*,b^*)$-unicuspidal curve $C^*$ with $b^*>7a^*$, with $(a^*, b^*)$ corresponding to $\phi^{4(m+1)}\zeta$, that sweeps a nodal curve $N^*$.
That is, the assumption $b>7a$ was not restrictive, and this concludes the proof.
\end{proof}

Before turning to the proof of Theorem~\ref{t:construction}, we prove the following:

\begin{lemma}\label{lem:smoothatnode}
Let $N \subset \CP^2$ be the nodal cubic defined by the equation $x^3+y^3-xyz = 0$. Then for any positive integer $d$, there exists a reduced smooth projective curve $C$ of degree $d$ having local intersection multiplicity $3d$ with $N$ at its node.
\end{lemma}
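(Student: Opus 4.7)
The plan is to realise $C$ as a generic member of a suitable linear system on $\CP^2$. First, parametrise $N$ by $\nu\colon\CP^1\to\CP^2$, $\nu(t)=[t:t^2:1+t^3]$; the node $[0:0:1]$ is the image of both $t=0$ (yielding branch $B_1$ with tangent $y=0$) and $t=\infty$ (branch $B_2$ with tangent $x=0$), and $\nu$ is injective elsewhere. Hence, for a homogeneous $F$ of degree $d$, the orders of vanishing of $F\circ\nu$ at $t=0$ and $t=\infty$ record the intersection multiplicities of $V(F)$ with $B_1$ and $B_2$ at the node. Let $\mathcal L\subset\mathbb P(H^0(\CP^2,\mathcal O(d)))$ be the linear system of $F$ for which $F\circ\nu$ vanishes to order at least $3d-1$ at $t=0$: this is $3d-1$ linear conditions on the coefficients of $F$, so $\dim\mathcal L\ge (d-1)(d-2)/2$.

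The crux is a short monomial count. Writing $F=\sum a_{ijk}x^iy^jz^k$ so that $F\circ\nu(t)=\sum a_{ijk} t^{i+2j}(1+t^3)^k$, the only non-negative integer solution of $i+2j+3l=3d-1$ with $i+j+k=d$ and $0\le l\le k$ is $(i,j,k,l)=(0,1,d-1,d-1)$, and similarly the unique contributor to $t^{3d}$ is $z^d$. Therefore $[t^{3d-1}](F\circ\nu)=a_{01(d-1)}$ and $[t^{3d}](F\circ\nu)=a_{00d}$. Since the first of our conditions, $[t^0](F\circ\nu)=a_{00d}=0$, already forces the $t^{3d}$ coefficient to vanish, every $F\in\mathcal L$ satisfies $F\circ\nu(t)=a_{01(d-1)}\,t^{3d-1}$. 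This gives a clean dichotomy: either $a_{01(d-1)}\ne 0$, in which case $V(F)$ does not contain $N$ and meets $N$ only at the node with total multiplicity $(3d-1)+1=3d$; or $a_{01(d-1)}=0$, in which case $F\circ\nu\equiv 0$, so $N$ divides $F$ and $V(F)$ contains $N$ as a component.

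For $d=1,2$, the explicit choices $F=y$ and $F=x^2-yz$ give smooth curves with the required property. For $d\ge 3$, the subspace $\{F=N\cdot G\}\subset\mathcal L$ has projective dimension $\binom{d-1}{2}-1=(d-1)(d-2)/2-1$, strictly less than $\dim\mathcal L$, so $\mathcal L':=\{F\in\mathcal L: a_{01(d-1)}\ne 0\}$ is a nonempty open dense subset. Bertini's theorem then ensures that a generic $F\in\mathcal L$ has $V(F)$ smooth away from the base locus, which one checks is set-theoretically $\{[0:0:1]\}$ (using members of $\mathcal L'$ to rule out points on $N$ other than the node, and members of the form $NG$ to rule out points off $N$). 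At the node itself, the conditions $[t^0]=[t^1]=0$ combined with Euler's relation force $F(0,0,1)=\partial_x F(0,0,1)=\partial_z F(0,0,1)=0$, so smoothness of $V(F)$ there reduces to $\partial_y F(0,0,1)=a_{01(d-1)}\ne 0$, which is precisely the defining condition of $\mathcal L'$. A generic $F\in\mathcal L'$ therefore produces a smooth (and automatically reduced) degree-$d$ curve $C$ meeting $N$ only at the node with local multiplicity $3d$. The main point to watch is the monomial-count identity, which happily makes the ``good'' members of $\mathcal L$ (those not containing $N$) coincide exactly with those smooth at the node, so the single open condition $a_{01(d-1)}\ne 0$ handles both requirements at once.
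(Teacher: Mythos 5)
Your proof is correct and genuinely different from the paper's. The paper first constructs an explicit degree-$d$ polynomial $f_d$ via a recursion $f_n = c_{n-2}f_{n-1} - c_{n-1}xyf_{n-2}$ and proves by induction that $f_d$ vanishes to order $3d-1$ along one branch and is smooth at the node; it then perturbs $f_d$ inside a pencil $\{\lambda f_d + \mu h g_{d-3}\}$ (with $g_{d-3}$ the Fermat polynomial) and invokes Bertini. You instead work directly with the full linear system $\mathcal L$ of degree-$d$ forms vanishing to order $3d-1$ along the branch at $t=0$, and the whole argument hinges on your monomial count: the unique monomial with $i+2j+3l = 3d-1$, $i+j+k=d$, $0\le l\le k$ is $(i,j,k,l)=(0,1,d-1,d-1)$, and the unique one with $i+2j+3l=3d$ is $(0,0,d,d)$. (Indeed $l\le k = d-i-j$ gives $i+2j+3l\le 3d-2i-j$, forcing $2i+j\le 1$ resp.\ $\le 0$.) This single observation simultaneously identifies $[t^{3d-1}](F\circ\nu) = a_{01(d-1)}$, shows $[t^{3d}](F\circ\nu)=a_{00d}=0$ automatically, and equates $a_{01(d-1)}$ with $\partial_y F(0,0,1)$ — so the one open condition $a_{01(d-1)}\ne 0$ governs both ``not containing $N$'' and ``smooth at the node''. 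The dimension count $\dim\mathcal L \ge \binom{d-1}{2}$ versus $\binom{d-1}{2}-1$ for the sublocus $\{NG\}$ shows this condition is generic, and the base locus reduces to the single point $[0:0:1]$, which makes the Bertini application cleaner than the paper's (whose pencil has a larger base locus along the Fermat curve). Your route trades the paper's explicit inductive construction for a linear-algebra argument and is, if anything, shorter and more transparent; it buys a tidier base locus and dispenses with the auxiliary Fermat polynomial.
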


\begin{proof}
Set $h(x,y,z) = x^3+y^3-xyz$ for the polynomial defining $N$. The node of $N$ is at the point $[x:y:z] = [0:0:1]$.

First, we prove that there exists a smooth \emph{local} curve branch of degree at most $d$ having local intersection multiplicity $3d$ with $N$ at its node. To show this, we will work in the affine chart $z = 1$ of $\CP^2$. $N \subset \CP^2$ has a parametrisation with $[t:s] \in \CP^1$ as follows:
\[ [x:y:z] = [ts^2: t^2s : t^3+s^3].\]
Therefore, in the affine chart with coordinates $x$ and $y$ around $(0,0)$,
\[(x(t),y(t)) = \left(\sum_{k=0}^{\infty}(-1)^kt^{3k+1}, \sum_{k=0}^{\infty}(-1)^kt^{3k+2}\right), \quad |t|<\frac12\]
is a parametrisation of the branch of $N$ tangent to the $x$-axis, mapping $t=0$ to the node. For simplicity, while working in this chart, we denote by $h(x,y)$ the polynomial $x^3+y^3-xy$ as well.

We claim that for each $d$ there exists a polynomial $f_d(x,y)$ of degree $d$ such that it has local intersection multiplicity $3d$ with $N$ at its node and it is smooth at that point. For $d=1,2$ we set
\[ f_1(x,y) = y,\quad f_2(x,y) = y-x^2: \]
the order of $t$ in the expansion of $f_1(x(t),y(t))$ is $2$, and the coefficient of $t^2$ is $1$; and the order of $t$ in the expansion of $f_2(x(t),y(t))$ is $5$, and the coefficient of $t^5$ is $1$.

Let $c_1 = c_2 = 1$. For $n \geq 3$, define recursively the pair $(f_n(x,y), c_n)$ as follows:
\[ f_n(x,y) = c_{n-2}f_{n-1}(x,y) - c_{n-1}xyf_{n-2}(x,y) \]
and let $c_n$ denote the coefficient of $t^{3n-1}$ in the expansion of $f_n(x(t),y(t))$.

\begin{claim}
For any integer $n \geq 1$, the following hold:

\begin{enumerate}
\item[(1)] $f_n(x,y)$ is a polynomial of degree $n$. If the coefficient of $x^iy^j$ in $f_n(x,y)$ is nonzero, then $i+2j \equiv 2 \pmod 3$. The coefficient of the monomial $y$ in $f_n(x,y)$ is nonzero.

\item[(2)] The order of $t$ in the expansion of $f_n(x(t),y(t))$ is $3n-1$; in particular, $c_n \neq 0$.
\end{enumerate}
\end{claim}

\begin{proof}
We prove this by induction on $n$. Both properties can be easily checked for $n=1,2$. Assume we know the claim for $f_{n-1}(x,y)$ and $f_{n-2}(x,y)$, and let us prove it for $f_n(x,y)$. For an integer $m$, let $Q_m(t)$ be defined by $f_m(x(t),y(t))/t^{3m-1}$; the inductive hypothesis tells us that $Q_{n-1}(t)$ and $Q_{n-2}(t)$ are power series satisfying $Q_{n-1}(0) = c_{n-1} \neq 0$ and $Q_{n-2}(0) = c_{n-2} \neq 0$.

All the properties in (1) are obvious from the definition; for the non-vanishing of the coefficient of $y$, we observe that it is obtained from the coefficient of $f_{n-1}(x,y)$ by multiplication with $c_{n-2} \neq 0$, and therefore it is nonzero.

For (2), write
\[ f_n(x(t),y(t)) = \sum_{m=0}^{\infty}a_mt^m. \]
First, we prove that $a_m=0$ for $m \leq 3n-2$. As all powers of $t$ in the expansion of $x(t)$ are congruent to $1 \pmod 3$ and all powers of $t$ in the expansion of $y(t)$ are congruent to $2 \pmod 3$, using part (1) we immediately get that $a_m = 0$ if $m \not\equiv 2 \pmod 3$. Therefore, it is enough to show that $a_m = 0$ for $m\equiv2\pmod3$ with $m \leq 3n-4$. From the definition of $f_n(x,y)$, $Q_{n-1}(t)$, $Q_{n-2}(t)$, and the inductive hypothesis, we have
\[ f_n(x(t),y(t)) = c_{n-2}t^{3(n-1)-1}Q_{n-1}(t) - c_{n-1}t^3P(t)\cdot t^{3(n-2)-1}Q_{n-2}(t)  \]
where $P(t)$ is the power series associated to $(1+t^3)^{-2}$. Therefore, there is indeed no power $t^m$ in the above expansion for $m \leq 3n-4$.

Assume now that $t^{3n-1}$ vanishes as well, i.e.~$c_n = 0$. This would mean that $f_n(x,y)$ has local intersection multiplicity at least $3n$ with the parametrized branch of $N$, therefore (since it also intersects the other branch at the node), it has intersection multiplicity at least $3n+1$ with the cubic $N$ altogether. As the degree of $f_n(x,y)$ is $n$, from B\'{e}zout's theorem, it follows that $h(x,y)$ divides $f_n(x,y)$. But this is impossible, as $y$ has non-zero coefficient in $f_n(x,y)$; a contradiction.
\end{proof}

\begin{rmk}
Numerical evidence indicates that $c_n$ is in fact $1$ for every $n \geq 1$. However, this is not needed in the argument.
\end{rmk}

The curve $C_d$ defined by the equation $f_d(x,y)=0$ has local intersection multiplicity $3d-1$ with one branch of the node of $N$, and at least one with the other branch. Since it does not contain the curve $N$, its intersection multiplicity with $N$ at the node is at most $3d$, by B\'ezout's theorem. This also shows that the intersection multiplicity of $C_d$ with one of the two branches of $N$ is 1, hence $C_d$ is reduced.

In this way, we have proven the existence of a local germ $f_d(x,y)$ with the above properties. We will also denote with $f_d(x,y,z)$ the homogenisation of the germ, and with $C_d$ the associated, degree-$d$ projective curve. As indicated above, $C_d$ is reduced and it intersects $N$ only at the node with multiplicity $3d$. However, it may have singular points away from $[0:0:1]$ if $d \geq 3$.

Recall that we have defined two polynomials $f_1(x,y)$ and $f_2(x,y)$ of degrees 1 and 2 respectively, whose zero sets are curves that sweep $N$ at the node. Suppose now that $d>2$. For a non-negative integer $m$ let $g_m(x,y,z)$ be the Fermat polynomial $g_m(x,y,z) = x^m + y^m + z^m$, defining the Fermat curve $\{g_m(x,y,z) = 0\}$.

Consider the linear pencil of curves
\[  \{\lambda f_d(x,y,z) + \mu h(x,y,z)g_{d-3}(x,y,z) = 0\} \]
as $[\lambda: \mu]$ varies in $\CP^1$.

The basepoints of the linear system are the intersection points of $C_d$ and $\{h(x,y,z)g_{d-3}(x,y,z) = 0\}$. These are the point $[0:0:1]$ and the intersections of $\{g_{d-3}(x,y,z) = 0\}$ with $C_d$.

We claim that at each basepoint one of the two generating curves of the pencil is smooth. In fact, at $[0:0:1]$ the curve $C_d$ is smooth by construction; on the other hand, since $C_d$ and $N$ intersect only at $[0:0:1]$, at every other basepoint the curve $\{h(x,y,z)g_{d-3}(x,y,z)=0\}$ coincides with the Fermat curve, and hence it is smooth. Therefore, the generic member of the pencil is smooth and reduced by Bertini's theorem~\cite[Theorem 17.16]{Harris}.

Notice also that the intersection multiplicity of both generators of the pencil with $N$ is at least $3d$: it is at least $3d-1$ with one branch $B_2$ and at least $1$ with the other branch $B_1$ of $N$ at the node. In this way, for all members of the pencil the intersection multiplicity is at least $3d$ (at least $3d-1$ with $B_2$ and at least $1$ with $B_1$). Now again by B\'ezout's theorem, it can not be higher. In particular, the generic member of the pencil has intersection multiplicity exactly $3d-1$ with $B_2$ and $1$ with $B_1$; altogether $3d$ with $N$ at its node.  So the generic member of the pencil is the curve $C$ we are looking for.
\end{proof}

\begin{proof}[Proof of Theorem~\ref{t:construction}]
Recall from Proposition~\ref{prop:orevkovtrick} that a curve $C$ sweeping a cubic $N$ induces a labelling of the branches of $N$ at its node, hence the pair $(C,N)$ determines two Cremona transformations $f^{N}_1$ and $f^N_2$ which in turn define the two associated nodal cubics $N^1$ and $N^2$. Moreover, $f^N_j(C)$ sweeps $N^j$ for $j=1,2$.

By Lemma~\ref{lem:smoothatnode} we can find a smooth curve $C$ of degree $d=k+1=L^k_3$ and genus $g = k(k-1)/2$ that sweeps a nodal cubic $N$.
Notice that, since $C$ is smooth, the `Puiseux pair' of the singularity at the node is $(L^k_1, L^k_5) = (1,3k+2)$; also, observe that $(1,3k+2)$ corresponds to $\zeta = 7k+4+(3k+2)\sqrt5$.

Also, since $k>1$, $L^k_5 > 7L^k_1$, Lemma~\ref{cor:allbuttwo} gives, for every $i\neq -1,-2$, a 1-unicuspidal curve whose Puiseux pair corresponds to $(\phi^{4i}\zeta)^+$.
Now, simply observe that $(L^k_{4i+1}, L^k_{4i+5})$ corresponds to $\phi^{4i}\zeta$. 

As for the second part of the stament, by Theorem~\ref{thm:pell}, almost all Puiseux pairs $(a,b)$ such that there exists an $(a,b)$-unicuspidal curve of genus $g$ satisfy~\eqref{eq:pell}.
As noted in Remark~\ref{r:uniqueness}, if $g\equiv1\pmod 3$ and $2g-1$ is a power of a prime, then all solutions of the generalised Pell equation~\eqref{eq:pell} correspond to elements of form $(\zeta\phi^{4n})^+$, $n\in \mathbb{Z}$ for some $\zeta \in \mathcal{O}$. 
%Together with Theorem~\ref{thm:pell},
% and the construction above (which is in fact Lemma~\ref{cor:allbuttwo} applied for the sweeping smooth curves in this case)
On the other hand, since we realised all but two elements in a $\pm\phi^4$-orbit, for these genera we have constructed examples corresponding to \e{almost all} singularities of 1-unicuspidal curves.
\end{proof}

\begin{ex}
Let us consider the case $g=6$. According to Theorem~\ref{t:construction}, for every $n\neq -1,-2$ we constructed a singular genus-6 curve with a cusp of type corresponding to $(\phi^{4n}(32 + 14\sqrt{5}))^+ \in \O$.

Notice that, since $g$ is divisible by 3, by Proposition~\ref{prop:abcoprime}, the elements $(\phi^{4n+2}(32 + 14\sqrt{5}))^+$, too, correspond to solutions of Equation~\eqref{eq:pell}.
\end{ex}

\begin{ques}\label{q:g6}
Is it the case that for almost all $n$ the elements
\[(\phi^{4n+2}(32 + 14\sqrt{5}))^+\in\O\]
correspond to the Puiseux pair of a 1-unicuspidal curve $C_n$?
\end{ques}

In light of Lemma~\ref{cor:allbuttwo}, the answer is positive if we can find an 
%$(\overline a, \overline b)$-unicuspidal curve $C_{\overline{n}}$ that sweeps a nodal cubic $N$, where $(\overline a, \overline b)$ corresponds to $(\phi^{4\overline{n}+2}(32 + 14\sqrt{5}))^+$, and use Proposition~\ref{prop:orevkovtrick} as in the proof of Theorem~\ref{t:construction}.
integer $n^*$ and an $(a^*, b^*)$-unicuspidal curve $C^*$ that sweeps a nodal cubic $N^*$, where $(a^*, b^*)$ corresponds to $(\phi^{4n^*+2}(32 + 14\sqrt{5}))^+$.

Notice also that, since $2g-1 = 11$ is a prime, together with the previous construction, this would realise almost all possible Puiseux pairs for this genus (see Remark~\ref{r:uniqueness}).

\begin{ex}
Let us consider the case $g=10$.
Theorem~\ref{t:construction} gives us a family of curves, each with a singularity corresponding to $(\phi^{4n}(39 + 17\sqrt{5}))^+ \in \O$ for any $n\neq-1,-2$.

Since $g\equiv 1\pmod 3$ and $2g-1 = 19$ is a prime, by Remark~\ref{r:uniqueness} all the solutions of the Pell equation which can correspond to possible Puiseux pairs are in the set $\{(\zeta \phi^{4n})^{+} : n \in \mathbb{Z}\}$.

So we constructed all but finitely many $1$-unicuspidal genus-$10$ curves (up to equisingularity).
\end{ex}

Similar arguments apply as soon as $g$ is a triangular number such that $2g-1$ is a power of a prime greater than $5$.
That is, if $g\equiv1\pmod3$, then Theorem~\ref{t:construction} produces almost all 1-unicuspidal curves of genus $g$, up to equisingularity.
The first triangular number for which we are unable to produce almost all 1-unicuspidal curves is $g=325$, for which $2g-1 = 649 = 11\cdot 59$.

If, on the other hand, $2g-1$ is congruent to $-1$ modulo $3$ and larger than 5, Theorem~\ref{t:construction} only provides explicit examples of 1-unicuspidal curves corresponding to \e{half} of the solutions of Equation~\eqref{eq:pell}. We have a natural generalisation of Question~\ref{q:g6} above:

\begin{ques}
Let $g>3$ be a triangular number such that $g\equiv0\pmod3$ and $2g-1$ is a power of a prime. Is it the case that for almost all coprime solutions $(a,b)$ of Equation~\eqref{eq:pell} there exists an $(a,b)$-unicuspidal curve of genus $g$?
\end{ques}

\begin{ex}\label{ex:g1_3}
For $g = 1$ and $g = 3$ some unusual things happen, as $2g-1$ is 1 in the first case, and 5 in the second.

If $g = 1$, we can start with $(a,b) = (1,8)$ corresponding to $\zeta = 18+8\sqrt{5}$. We get that there exist curves corresponding to $(\zeta \phi^{4n})^+$ for any $n \in \mathbb{Z}$, except $n = -1$ (yielding $\zeta \phi^{-4} = 3 + \sqrt{5}$) and $n = -2$ (yielding $\zeta \phi^{-8} = 3 - \sqrt{5}$). However, now $(\zeta \phi^{4n})^+ = (\zeta \phi^{-4(n+3)})^+$, so the set of possible Puiseux pairs is indexed by $\mathbb{N}$ rather than $\mathbb{Z}$.

If $g = 3$, we can start with $(a,b) = (1,11)$ corresponding to $\zeta = 25+11\sqrt{5}$. We get that there exist curves corresponding to $(\zeta \phi^{4n})^+$ for any $n \in \mathbb{Z}$, except $n = -1$ (yielding $\zeta \phi^{-4} = 5 + \sqrt{5}$) and $n = -2$ (yielding $\zeta \phi^{-8} = 10 - 4\sqrt{5}$). Now although $5 \equiv -1\pmod3$ and $\zeta \phi^2 = 65 + 29\sqrt{5}$ (so $3$ does not divide $y$ here), we do not get another family of solutions, as $\zeta \phi^2 = 65 + 29\sqrt{5} = \overline{\zeta \phi^{-12}}$.
\end{ex}

\begin{ex}
The smallest non-triangular $g$ for which we have infinitely many coprime solutions of the Pell equation is $g = 16$. By Proposition~\ref{prop:abcoprime}, all coprime solutions of Equation~\eqref{eq:pell} correspond to $(\phi^{4n}(57+25\sqrt{5}))^+$, $n \in \mathbb{Z}$.
\end{ex}

The natural question to ask in this setting is the following:

\begin{ques}
Is it the case that for every $g$, almost all coprime solutions $(a,b)$ of Equation~\eqref{eq:pell} are realised by an $(a,b)$-unicuspidal curve of genus $g$?
\end{ques}

\subsection{More examples}
In the next two examples, we exhibit 1-unicuspidal genus-$g$ curves for each genus $g\ge2$ (except $g=3$). These curves are of minimal degree $d_{min}$ among all curves with the given singularity, but $d_{min}$ is not the minimal degree for which the degree-genus formula~\eqref{eq:dg_one_p} has a solution.

Also, in both cases, $a+b$ is not $3d$, thus yielding examples of curves (for infinitely many $g$) not covered by the identity of Theorem~\ref{thm:pell}.

\begin{ex}\label{ex:except1}
Consider the projective curve $C_p$ defined by the equation
\[ C_p = \{ x^{p+3} + y^{p+3} + x^pz^3  = 0 \}\]
for $p \geq 2$, and $p$ not divisible by $3$. This is a unicuspidal curve of degree $d = p+3$ and genus $g = p+2$. The cusp is of type $(p, p+3)$. This local type is always a candidate with $d = p+2$ as well; in this case $g=1$, but it is not an admissible candidate since Equation~\eqref{eq:sem_ineq} obstructs the existence of such curves: set $j = 1$ and $k = 0$ and notice that $R_{d+1} = R_{p+3} = \#(\Gamma\cap[0, p+3)) = 2 < 3$, as the semigroup is generated by two elements $p$ and $p+3$.

Notice that in this case $a+b = 2p+3$, while $3d = 3(p+3)$.
\end{ex}

In fact, for certain local types, Theorem~\ref{thm:main} can exclude arbitrarily many candidate degrees as well, as the next example shows.

\begin{ex}\label{ex:except2}
Consider the projective curve $D_p$ defined by
\[ D_p = \{ x^p z^{p-1} + x^{2p-1} + y^{2p-1} = 0 \}\]
for $p \geq 2$. This is a unicuspidal curve of degree $d = 2p-1$ and genus $g = (p-1)(p-2)$. The cusp has a torus knot of type $(p, 2p-1)$. This local type is always a candidate with all possible smaller $d$'s (such that $\binom{d-1}{2} \geq \delta = (p-1)^2$), but the inequality~\eqref{eq:sem_ineq} obstructs the existence of such curves (set $j = 1$ and $k = 0$ and notice that in order to have $R_{d+1} \geq 3$ we need $d \geq 2p-1$). In this way, this example shows that for the topological type $(p, 2p-1)$ approximately $(2-\sqrt{2})p$ candidate degrees can be obstructed.

In this case, $a+b = 3p-1$, while $3d=6p-3$.
\end{ex}

\begin{rmk}
In fact, the above applications of~\eqref{eq:sem_ineq} (and, surprisingly, the proof of Proposition~\ref{p:3d} too) use only the lower bound for $R_{jd+1-2k}$ and only with $k=0$. This special case can be obtained by applying B\'ezout's theorem. Indeed, one can repeat the argument of the proof of~\cite[Proposition 2]{BLMN} without any change, and get
\[
\frac{(j+1)(j+2)}{2} \leq R_{jd+1}
\]
for $j = 0, \dots, d-3$.
\end{rmk}

Notice that in all the examples above, $a + b < 3d$. The next example shows that there are infinitely many 1-unicuspidal curves with $a + b > 3d$ as well.

\begin{ex}\label{ex:except3}
For any positive integer $n > 2$, there exists a $(3n,21n+1)$-unicuspidal curve $C_n'$ of degree $d = 8n$ and genus $(n-1)(n-2)/2$. To construct such curves, consider a smooth curve $C_n$ of degree $n$ touching a nodal cubic $N$ in one single point outside its node (with local intersection multiplicity $3n$). By Lemma~\ref{lem:smoothatflex} stated below, such a pair $(N,C_n)$ exists. Choose a branch of $N$ at the node, and let $f^N$ denote the associated Orevkov's Cremona transformation (we do not have a specified order of branches at the node here, so there is no point in distinguishing $f^N_1$ and $f^N_2$), as described in Subsection~\ref{ss:crem}. Then one can check that the $C_n' = f^N(C_n)$ has degree and cusp type as described above, therefore, $a + b = 24n+1 > 24n = 3d$.

Recall that the birational map $f^N$ automatically produces a new nodal cubic $N^1$ as the image of $E_7$ (the exceptional divisor obtained at the last blowup) after the series of blowdowns. Notice that we now can distinguish the two branches of $N^1$: call $B_1$ the branch having intersection multiplicity $3n$ with $C_n'$ and  $B_2$ the one having intersection multiplicity $21n$ with $C_n'$. Now applying the birational map $f^{N^1}_2$ leads back to the original smooth curve $C_n$. On the other hand, applying $f^{N^1}_1$ leads to another cuspidal curve; however, this has more than one Puiseux pair.
\end{ex}

\begin{lemma}\label{lem:smoothatflex}
Let $N \subset \CP^2$ be the nodal cubic defined by the equation $x^3 + x^2y - yz^2 = 0$. Then for any positive integer $d > 2$, there exists a reduced smooth projective curve $C$ of degree $d$ having local intersection multiplicity $3d$ with $N$ at the point $[x:y:z] = [0:0:1]$.
\end{lemma}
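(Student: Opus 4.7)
Since $\partial_y h(0,0,1) = -1 \neq 0$, the point $[0:0:1]$ is a smooth point of $N$, and one checks directly that it is in fact a flex (the tangent line $y = 0$ has contact multiplicity $3$ with $N$ there). The plan is to first produce an explicit polynomial of degree $d$ whose zero locus is smooth at $[0:0:1]$ and meets $N$ there with multiplicity exactly $3d$, and then deform it in a linear pencil to achieve global smoothness via a Bertini argument, in the same spirit as the proof of Lemma~\ref{lem:smoothatnode}.

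Writing $h(x,y,z) = x^3 + x^2y - yz^2$, I would consider
\[
F_d(x,y,z) = y^d + z^{d-3}\, h(x,y,z)
\]
for $d \geq 3$. In the affine chart $z = 1$, the curve $N$ is parametrised near $(0,0)$ by $t \mapsto \bigl(t,\, t^3/(1-t^2)\bigr)$, and the identity $y(1-x^2) = x^3$ satisfied along this parametrisation makes $h|_N \equiv 0$; thus the restriction of $F_d$ to the parametrisation reduces to $\bigl(t^3/(1-t^2)\bigr)^d$, which vanishes to order exactly $3d$. Since $\partial_y F_d = d y^{d-1} + x^2 - 1$ evaluates to $-1$ at the origin, $F_d$ is smooth at $[0:0:1]$. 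Moreover $h$ is irreducible and does not divide $y^d$, so $F_d$ does not contain $N$ as a component; B\'ezout's theorem then forces the intersection $\{F_d = 0\} \cap N$ to have total degree $3d$, concentrated entirely at $[0:0:1]$.

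To globalise, I would consider the pencil
\[
\{\lambda F_d + \mu h \cdot g_{d-3} = 0\}, \qquad [\lambda:\mu] \in \CP^1,
\]
where $g_{d-3}(x,y,z) = x^{d-3} + y^{d-3} + z^{d-3}$ is the degree-$(d-3)$ Fermat polynomial (interpreted as the constant $3$ when $d = 3$, in which case no extra basepoints appear). The base locus of this pencil is contained in $\{F_d = 0\} \cap \{hg_{d-3} = 0\}$: at $[0:0:1]$ the generator $F_d$ is smooth; at any other basepoint we lie on $\{g_{d-3} = 0\}$ but not on $N$ (since $\{F_d = 0\} \cap N = \{[0:0:1]\}$), so $hg_{d-3}$ agrees locally with $g_{d-3}$ up to a unit and is smooth there because Fermat curves are smooth. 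Bertini's theorem~\cite[Theorem~17.16]{Harris} then produces a smooth reduced generic member $C$ of the pencil. For generic $\lambda$, $C$ does not contain $N$ (since $h \nmid y^d$), and the restriction $(\lambda F_d + \mu hg_{d-3})|_N = \lambda F_d|_N$ vanishes to order $3d$ at $[0:0:1]$, so by B\'ezout the intersection of $C$ with $N$ is concentrated at $[0:0:1]$ with multiplicity $3d$, as required. The crux of the argument is the choice of $F_d$, relying on the algebraic identity on $N$ that makes $y^d$ inherit the correct order of vanishing while the correction term $z^{d-3}h$ restores smoothness at $[0:0:1]$; the remainder is a direct adaptation of the Bertini pencil manipulation from Lemma~\ref{lem:smoothatnode}.
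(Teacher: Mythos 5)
Your proposal is correct and takes essentially the same approach as the paper: the polynomial $F_d = y^d + z^{d-3}h$ is precisely the homogenisation of the paper's affine germ $f_d(x,y) = y^d + x^3 + x^2y - y$, and the Bertini pencil with $h\cdot g_{d-3}$ is identical. Your write-up fills in the details (why $F_d|_N$ has order exactly $3d$, smoothness at the other basepoints) that the paper leaves as ``easy to check'' or delegates to the proof of Lemma~\ref{lem:smoothatnode}, but there is no substantive difference in strategy.
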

\begin{proof}
The proof is similar to the proof of Lemma~\ref{lem:smoothatnode}.
Set $h(x,y,z) = x^3 + x^2y - yz^2$ for the equation of $N$ itself.
In the affine chart of $\CP^2$ given by $z = 1$ with coordinates $x$ and $y$ around $(0,0)$,
\[ (x(t), y(t)) = \left(t, \frac{t^3}{1-t^2}\right), \quad |t| < 1/2 \]
is a parametrisation of the local branch of $N$, mapping $t=0$ to the inflection point $(0,0)$.

Set $f_d(x,y) = y^d + x^3 + x^2y - y$. It is easy to check that $f_d(x,y)$ is smooth at $(0,0)$ and has local intersection multiplicity $3d$ with $N$. We will also denote with $f_d(x,y,z)$ the homogenisation of this germ. Consider the linear pencil
\[ \{\lambda f_d(x,y,z) + \mu h(x,y,z) g_{d-3}(x,y,z)=0\}  \]
where $[\lambda:\mu]\in\CP^1$ and $g_{d-3}(x,y,z)$ is the Fermat polynomial defined above. Similarly as in the proof of Lemma~\ref{lem:smoothatnode}, by Bertini's theorem it follows that the generic member of the pencil determines a smooth curve with the desired properties.
\end{proof}

\begin{rmk}
Call an $(a,b)$-unicuspidal degree-$d$ curve \emph{exceptional} if $a + b \neq 3d$. Theorem~\ref{thm:pell} says that there are at most finitely many exceptional curves for each fixed genus $g$. More precisely, analysing the proof of Theorem~\ref{thm:pell}, one can obtain an upper bound quadratic in $g$ for the degree of exceptional curves, that is, $d \leq O(g^2)$ for all exceptional curves of degree $d$ and genus $g$.

Of course, there can be infinitely many exceptional curves of varying genus $g$. Indeed, the above examples provide us infinitely many such curves: in Example~\ref{ex:except1}, we have $a+b<3d$ and $d=O(g)$, in Example~\ref{ex:except2}, we have $a+b<3d$ again but $d=O(\sqrt{g})$; in Example~\ref{ex:except3}, we have $a+b>3d$ and $d=O(\sqrt{g})$ again.
\end{rmk}

The following natural question arises.

\begin{ques}
Let $d_g$ be the largest degree of an exceptional curve of genus $g$. How fast does $d_g$ grow with $g$? What is the smallest power $g^\mu$ of $g$ such that $d_g = O(g^{\mu+\varepsilon})$ for all $\varepsilon>0$?

The same question can be asked separately for exceptional curves with $a+b>3d$ and $a+b<3d$.
\end{ques}

From the proof of Theorem~\ref{thm:pell} and Example~\ref{ex:except1} it follows that the exponent we are looking for is between $1$ and $2$.
 % construction of curves

\bibliographystyle{amsplain}
\bibliography{unicuspidal}

\end{document}